\def\mapr{0}
\theoremstyle{plain}
\newtheorem{proposition}{Proposition}
\newtheorem{lemma}{Lemma}
\newtheorem{theorem}{Theorem}
\theoremstyle{definition}
\newtheorem{definition}{Definition}
\theoremstyle{remark}
\newtheorem*{example}{Example}
\newtheorem*{remark}{Remark}
\newcommand{\cX}{\mathcal X}
\newcommand{\cY}{\mathcal Y}
\newcommand{\cA}{\mathcal A}
\newcommand{\RR}{\mathbb{R}}
\renewcommand{\S}{{\bf S}} 
\DeclareMathOperator{\rank}{rank}
\DeclareMathOperator{\cprank}{cprank}
\DeclareMathOperator{\supp}{supp}
\DeclareMathOperator{\conv}{conv}
\DeclareMathOperator{\diag}{diag}
\DeclareMathOperator{\fool}{fool} 
\DeclareMathOperator{\1}{\mathbf{1}}
\let\vec\relax 
\DeclareMathOperator{\vec}{vec}
\DeclareMathOperator{\aff}{aff} 
\newcommand{\mb}[1]{\bm{#1}} 
\newcommand{\cp}{\text{\upshape cp}} 
\newcommand{\sos}{\text{\upshape sos}} 
\newcommand{\fra}{\text{\upshape frac}} 
\newcommand{\bartheta}{\bar{\vartheta}}
\DeclareMathOperator{\RG}{RG} 
\newcommand{\pn}{\mathscr{N}} 
\newcommand{\pns}{\mathscr{N}^{\ast}} 
\newcommand{\swapix}[3]{#1[#3 : #2]} 
\newcommand{\ecc}{\text{\upshape cl}} 
\title{
Self-scaled bounds for atomic cone ranks:\\
applications to nonnegative rank and cp-rank
}
\author{Hamza Fawzi \and Pablo A. Parrilo}
\institute{Laboratory for
  Information and Decision Systems (LIDS), Massachusetts Institute of
  Technology, Cambridge, MA 02139, USA.\\
\email{\{hfawzi,parrilo\}@mit.edu}}
\author{Hamza Fawzi \and Pablo A. Parrilo\thanks{The authors are with
    the Laboratory for Information and Decision Systems, Department of
    Electrical Engineering and Computer Science, Massachusetts
    Institute of Technology, Cambridge, MA 02139. Email:
    \texttt{\{hfawzi,parrilo\}@mit.edu}.}}
\renewcommand\footnotemark{}
\date{April 11th, 2014}
\begin{document}
\maketitle

\begin{abstract}
The nonnegative rank of a matrix $A$ is the smallest integer $r$ such that $A$ can be written as the sum of $r$ rank-one nonnegative matrices. The nonnegative rank has received a lot of attention recently due to its application in optimization, probability and communication complexity. In this paper we study a class of atomic rank functions defined on a convex cone which generalize several notions of ``positive'' ranks such as nonnegative rank or cp-rank (for completely positive matrices). The main contribution of the paper is a new method to obtain lower bounds for such ranks which improve on previously known bounds. Additionally the bounds we propose can be computed by semidefinite programming using sum-of-squares relaxations. The idea of the lower bound relies on an atomic norm approach where the atoms are \emph{self-scaled} according to the vector (or matrix, in the case of nonnegative rank) of interest. This results in a lower bound that is invariant under scaling and that is at least as good as other existing norm-based bounds.

We mainly focus our attention on the two important cases of nonnegative rank and cp-rank where our bounds have an appealing connection with existing combinatorial bounds and satisfy some additional interesting properties. For the nonnegative rank we show that our lower bound can be interpreted as a non-combinatorial version of the fractional rectangle cover number, while the sum-of-squares relaxation is closely related to the Lov\'asz $\bartheta$ number of the rectangle graph of the matrix. The self-scaled property also implies that the lower bound is at least as good as other norm-based bounds on the nonnegative rank.  Finally we prove that the lower bound inherits many of the structural properties satisfied by the nonnegative rank such as invariance under diagonal scaling, subadditivity, etc.
We also apply our method to obtain lower bounds on the cp-rank for completely positive matrices. In this case we prove that our lower bound is always greater than or equal the plain rank lower bound, and we show that it has interesting connections with combinatorial lower bounds based on edge-clique cover number.
\end{abstract}


\section{Introduction}

\paragraph{Preliminaries} Given an elementwise nonnegative matrix $A \in \RR^{m\times n}_+$, a \emph{nonnegative factorization} of $A$ of size $r$ is a decomposition of $A$ of the form:
\[ A = \sum_{i=1}^r u_i v_i^T, \]
where $u_i\in \RR^m, v_i \in \RR^n$ are elementwise nonnegative vectors. The \emph{nonnegative rank} of $A$, denoted $\rank_+(A)$ is the smallest size of a nonnegative factorization of $A$. Observe that the following inequalities always hold:
\[ \rank(A) \leq \rank_+(A) \leq \min(n,m). \]
The nonnegative rank plays an important role in statistical modeling \cite{drton2009lectures,kubjas2013fixed}, communication complexity \cite{lovasz1990communication,lee2009lower} and optimization \cite{yannakakis1991expressing,gouveia2011lifts}. 
In probability and statistics, a nonnegative matrix $A\in \RR^{\cX \times \cY}_+$ has a natural interpretation as the joint distribution of a pair of random variables $(X,Y)$, i.e.,
\[ A(x,y) = \Pr[X=x,Y=y], \]
for all $x \in \cX, y \in \cY$ (the matrix $A$ is assumed to be normalized so that its elements all sum to one). Under this interpretation, the constraint $\rank_+(A) \leq r$ encodes the fact that the pair $(X,Y)$ is a \emph{mixture of $r$ independent random variables} on $\cX\times \cY$. Indeed, since the elements of $A$ sum up to one, any nonnegative factorization of $A$ can be normalized appropriately so that it takes the form:
\begin{equation}
 \label{eq:mixtureP}
 A = \sum_{i=1}^r \lambda_i u_i v_i^T,
\end{equation}
where the coefficients $\lambda_i \geq 0$ sum up to one, and where $u_i \in \RR^{\cX}_+$ and $v_i \in \RR^{\cY}_+$ are nonnegative vectors with $\1^T u_i = \1^T v_i = 1$. 
Each rank-one term $u_i v_i^T$ corresponds to a distribution on $\cX\times \cY$ which is independent, and thus Equation \eqref{eq:mixtureP} expresses the fact that $A$ is the mixture of $r$ independent distributions on $\cX\times \cY$.

The nonnegative rank has a natural generalization to \emph{tensors}. Given a nonnegative tensor $A = [a_{i_1\dots i_n}]$ of size $d_1\times \dots \times d_n$, the nonnegative rank of $A$ is the smallest $r$ for which there exists a decomposition of $A$ of the form:
\[ A = \sum_{i=1}^r u_{1,i} \otimes u_{2,i} \otimes \dots \otimes u_{n,i}, \]
where for each $i=1,\dots,r$ the vectors $u_{1,i} \in \RR^{d_1},\dots,u_{n,i}\in\RR^{d_n}$ are elementwise nonnegative. When all the entries of $A$ sum up to one, $A$ can be seen as the joint probability distribution of $n$ random variables $(X_1,\dots,X_n)$. The set of nonnegative tensors with nonnegative rank less than or equal $r$ corresponds precisely to the joint distributions that are mixtures of $r$ independent distributions (cf. \cite{drton2009lectures}).

\paragraph{General framework} In this paper we present a new method to obtain lower bounds on the nonnegative rank. 
In fact, the method we introduce applies in general to any \emph{atomic rank function} associated with a convex cone. We make the \emph{atomic rank} notion precise in the following definition:
\begin{definition}
Let $K$ be a convex cone and $V$ be a given algebraic variety in some Euclidean space. Given $A \in K$ we define $\rank_{K,V}(A)$ to be the smallest integer $r$ for which we can write
\[ A = \sum_{i=1}^r R_i \]
where each $R_i \in K\cap V$. The function $\rank_{K,V}$ is called the \emph{atomic rank function} associated to $K$ and $V$.
\end{definition}
Different well-known notions of rank fit into this general framework:
\begin{itemize}
\item {\bf Sparsity of a nonnegative vector}: Let $K$ be the nonnegative orthant in $\RR^n$, i.e., $K = \RR^n_+$, and let $V$ be the variety of vectors having at most one nonzero component, i.e.,
\[ V = (\RR e_1) \cup \dots \cup (\RR e_n) = \{ x \in \RR^n \; : \; x_i x_j = 0 \;\; \forall 1\leq i < j \leq n \}, \]
where $e_1,\dots,e_n$ are the vectors of the canonical basis. Then for this choice of $K$ and $V$ the rank of an element $x \in K$ is the sparsity of the vector $x$, i.e., the number of nonzero components of $x$.
\item {\bf Nonnegative rank}: Let $K$ be the cone of nonnegative matrices in $\RR^{m\times n}$, i.e., $K = \RR^{m\times n}_+$ and let $V$ be the variety of rank-one matrices:
\[ V = \{ R \in \RR^{m\times n} \; : \; \rank R = 1 \}. \]
Then one can verify that $\rank_{K,V}(A)$ for $A \in K$ is precisely the nonnegative rank of $A$.

\item {\bf The plain rank of a symmetric positive semidefinite matrix}: When $A$ is a real symmetric positive-semidefinite $n\times n$ matrix, an important fact in linear algebra states that the (standard) rank of $A$ can be defined as the smallest $r$ such that we have:
\[ A = \sum_{i=1}^r R_i, \]
where the $R_i$'s are rank-one and \emph{symmetric positive-semidefinite} (what is remarkable here is that the rank-one terms $R_i$ can be taken to be symmetric and positive semidefinite). Thus if we choose $K = \S^n_+$ (the cone of real symmetric positive-semidefinite matrices) and $V$ to be the variety of rank-one matrices, then $\rank_{K,V}(A)$ is nothing but the (standard) rank of the matrix $A \in \S^n_+$.
\item {\bf CP-rank for completely-positive matrices}: A symmetric matrix $A \in \S^n$ is called \emph{completely-positive} \cite{berman2003completely} if it admits a decomposition of the form:
\[ A = \sum_{i=1}^r u_i u_i^T, \]
where the vectors $u_i$ are nonnegative. The \emph{cp-rank} of $A$ is defined as the smallest $r$ for which such a decomposition of $A$ exists. It corresponds to the atomic rank where $K$ is the cone of completely positive matrices, and $V$ is the variety of rank-one matrices.

\item {\bf Sums of even powers of linear forms}: Let $\RR[\mb{x}]_{2d}$ be the space of homogeneous polynomials of degree $2d$ in $n$ variables $\mb{x}=(x_1,\dots,x_n)$ . Let $L_{n,2d}$ be the cone of homogeneous polynomials that can be written as the sum of $2d$'th powers of linear forms, i.e., $P \in L_{n,2d}$ if:
\begin{equation}
\label{eq:decompP}
 P(\mb{x}) = \sum_{i=1}^r \ell_i(\mb{x})^{2d},
\end{equation}
where $\ell_i(\mb{x})$ are linear forms. In \cite{reznick1992sums}, Reznick studied a quantity which he denoted by $w(P)$ and is defined as the smallest number of terms in any decomposition of $P$ of the form \eqref{eq:decompP}. It is easy to see that $w(P)$ is exactly $\rank_{K,V}(P)$ where $K=L_{n,2d}$ and $V$ is the variety of $2d$'th powers of linear forms\footnote{It is known that the space $\RR[\mb{x}]_{2d}$ can be identified with the space of symmetric tensors of size $n\times n\times \dots \times n$ ($2d$ dimensions), see e.g., \cite[Section 3.1]{comon2008symmetric}. Then one can verify that a polynomial $P$ is the $2d$'th power of a linear form if and only if the tensor associated to $P$ is of the form $\ell \otimes \ell \otimes \dots \otimes \ell$.} (also known as the Veronese variety).
\begin{remark}
The quantity $w(P)$ is related to the real \emph{Waring rank} of homogeneous polynomials, see e.g., \cite{landsberg2012tensors,blekherman2014maximum}: the real Waring rank of a homogeneous polynomial $P$ of degree $k$ is the size of the smallest decomposition of $P$ as a linear combination of $k$'th powers of linear forms. The case considered above corresponds to the situation where $k=2d$ is even, and where the coefficients in the linear combination are required to be nonnegative.
\end{remark}
\begin{remark}
Let $P_{n,2d}$ be the cone of nonnegative polynomials in $\RR[\mb{x}]_{2d}$. It is known, see e.g., \cite[Section 4.4.2]{frgbook} that the cone $L_{n,2d}$ can be identified, via the apolar inner product in $\RR[\mb{x}]_{2d}$, with $P_{n,2d}^*$ the dual of the cone of nonnegative polynomials. The extreme rays of $P_{n,2d}^*$ correspond to point evaluations. Thus, using this dual point of view, the atomic rank of an element $\ell \in P_{n,2d}^*$ is the smallest $r$ such that $\ell$ can be written as a conic combination of $r$ point evaluations. For example if $\ell$ is an integral operator $\ell:P\mapsto \int P(x) d\mu(x)$ where $\mu$ is a positive measure on the unit sphere $\mathbb{S}^{n-1}$, then $\rank_{K,V}(\ell)$ gives the size of the smallest cubature formula of order $2d$ \cite{konig1999cubature} for the measure $\mu$.
\end{remark}
\end{itemize}

\paragraph{Self-scaled bounds} We now briefly explain the main idea of the lower bound in the general framework considered above: Let $A \in K$ and consider a decomposition of $A$ of the form:
\begin{equation}
 \label{eq:decompPK}
 A = \sum_{i=1}^r R_i, \qquad R_i \in V \cap K \; \; \forall i=1,\dots,r,
\end{equation}
An important observation is that each term $R_i$ in the decomposition above necessarily satisfies 
\[ 0\preceq_{K} R_i \preceq_{K} A \]
where $\preceq_{K}$ denotes the inequality induced by the cone $K$ (recall that $x \preceq_{K} y \Leftrightarrow y-x \in K$).
Thus, if we define:
\begin{equation}
\label{eq:AKV}
 \cA_{K,V}(A) := \Bigl\{ R \in V \; \text{ such that } \; 0 \preceq_{K} R \preceq_{K} A \Bigr\},
\end{equation}
then in any decomposition of $A$ of the form \eqref{eq:decompPK}, all the terms $R_i$ must necessarily belong to $\cA_{K,V}(A)$. As a consequence, if we can produce a linear functional $L$ such that $L(R) \leq 1$ for all $R \in \cA_{K,V}(A)$, then clearly $L(A)$ is a lower bound on the minimal number of terms in any decomposition of $A$ of the form \eqref{eq:decompPK}. Indeed this is because we have:
\[ L(A) = \sum_{i=1}^r L(R_i) \leq \sum_{i=1}^r 1 = r. \]
In other words, the quantity $L(A)$ gives a lower bound on $\rank_{K,V}(A)$. Now to obtain the best lower bound, one can look for the linear functional $L$ which maximizes the value of $L(A)$ while satisfying $L \leq 1$ on $\cA_{K,V}(A)$. We call this quantity $\tau_{K,V}(A)$ and this is the main object we study in this paper:
\begin{equation}
\label{eq:tauKV}
 \tau_{K,V}(A) := \max_{L \text{ linear}} \;\; L(A) \quad \text{ subject to } \quad L(R) \leq 1 \;\;\; \forall R \in \cA_{K,V}(A).
\end{equation}
The discussion above shows that $\tau_{K,V}(A)$ gives a lower bound on $\rank_{K,V}(A)$.
\begin{theorem}
Let $K$ be a convex cone and $V$ a given algebraic variety. Then for any $A \in K$ we have
\[ \rank_{K,V}(A) \geq \tau_{K,V}(A). \]
\end{theorem}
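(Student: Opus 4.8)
The statement is essentially a formalization of the discussion preceding it, so the plan is to turn that discussion into a clean argument. First I would dispose of the trivial case: if $A$ admits no decomposition of the form \eqref{eq:decompPK}, then $\rank_{K,V}(A) = +\infty$ by convention and the inequality holds vacuously. So assume $r := \rank_{K,V}(A) < \infty$ and fix a minimal decomposition $A = \sum_{i=1}^r R_i$ with each $R_i \in K \cap V$.

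The first key step is to show $R_i \in \cA_{K,V}(A)$ for every $i$. Since $R_i \in V$, it remains to check $0 \preceq_K R_i \preceq_K A$. The left inequality is just $R_i \in K$. For the right inequality, note $A - R_i = \sum_{j \neq i} R_j$ is a sum of elements of $K$, hence lies in $K$ because $K$ is a convex cone (in particular closed under addition), so $R_i \preceq_K A$ by definition of $\preceq_K$. Thus all the atoms $R_i$ of the minimal decomposition belong to the set $\cA_{K,V}(A)$ defined in \eqref{eq:AKV}.

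The second step is to use an arbitrary feasible functional for the program \eqref{eq:tauKV}. Let $L$ be any linear functional with $L(R) \leq 1$ for all $R \in \cA_{K,V}(A)$. By linearity and the previous step,
\[ L(A) = L\Bigl(\sum_{i=1}^r R_i\Bigr) = \sum_{i=1}^r L(R_i) \leq \sum_{i=1}^r 1 = r. \]
Since this holds for every feasible $L$, taking the supremum over feasible $L$ gives $\tau_{K,V}(A) \leq r = \rank_{K,V}(A)$, which is the claim. (Note the program \eqref{eq:tauKV} is always feasible, e.g. $L = 0$ works, so $\tau_{K,V}(A)$ is well defined.)

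I do not expect any real obstacle here — the argument is short and direct. The only points requiring a bit of care are the bookkeeping around the non-decomposable case and the observation that $A - R_i \in K$, which uses nothing more than closure of the cone $K$ under addition; everything else is linearity of $L$ and the definition of $\cA_{K,V}$.
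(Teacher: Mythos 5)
Your proof is correct and follows essentially the same approach as the paper: show each $R_i$ of a minimal decomposition lies in $\cA_{K,V}(A)$ (using that $A - R_i$ is a sum of cone elements), then apply a feasible $L$ and use linearity. The paper leaves these steps to the preceding discussion; you have simply spelled them out, including the harmless edge case $\rank_{K,V}(A) = +\infty$.
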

The idea of the lower bound described above may look similar to existing lower-bounding techniques based on dual norms like e.g., in \cite{lee2009lower} or \cite{doan2013finding}. The main difference however is the \emph{self-scaled}\footnote{We use the word \emph{self-scaled} as a descriptive term to convey the main idea of the lower bound presented in this paper. It is not related to the term as used in the context of interior-point methods (e.g., ``self-scaled barrier'' \cite{nesterov1997self}).} property of our lower bound: in other words, the specific normalization of the set of atoms $\cA_{K,V}(A)$ \emph{depends} on the element $A$, whereas in the other techniques the atoms are normalized with respect to some fixed norm (e.g., the $\ell_2$ norm, the $\ell_{\infty}$ norm, etc.), and \emph{independently} of $A$. In fact for this reason one can show that our lower bound is at least as good as any other lower bound obtained using norm-based methods (cf. Section \ref{sec:normbased} for more details).



\paragraph{Semicontinuity of atomic cone ranks} We saw that in any decomposition of the form \eqref{eq:decompPK}, each term $R_i$ must satisfy $0 \preceq_K R_i \preceq_K A$ and is thus \emph{bounded} (assuming $K$ is a pointed cone). Using this observation, one can show that atomic rank functions are lower semi-continuous, or equivalently, that the sets $\{ A \in K \; : \; \rank_{K,V}(A) \leq r\}$ are closed for any $r \geq 1$.
 This property was noted before in \cite{bocci2011perturbation,lim2009nonnegative} in the particular case of the nonnegative rank. Note that the positivity condition on the $R_i$'s here is crucial. It is well-known for example that the standard tensor rank is not lower semi-continuous for tensors of order $\geq 3$, which leads in this situation to the distinction between the rank and the \emph{border rank} \cite{landsberg2012tensors}.

\paragraph{Nonnegative rank} We now briefly discuss the specialization of our lower bound to the case of nonnegative rank.
As we mentioned, the case of nonnegative rank of matrices corresponds to the choices $K=\RR^{m\times n}_+$ (nonnegative matrices) and $V$ is the variety of rank-one matrices.
In this case we denote the set of atoms $\cA_{K,V}(A)$ simply by $\cA_+(A)$ and the quantity $\tau_{K,V}$ simply by $\tau_+$:
\begin{equation}
\label{eq:A+intro}
\cA_+(A) := \Bigl\{ R \in \RR^{m\times n} \; : \; \rank R \leq 1 \; \text{ and } \; 0\leq R \leq A \Bigr\},
\end{equation}
and
\begin{equation}
\label{eq:tau+intro}
 \tau_+(A) := \max_{L \text{ linear}} \;\; L(A) \quad \text{ subject to } \quad L(R) \leq 1 \;\;\; \forall R \in \cA_{+}(A).
\end{equation}
As defined above, the quantity $\tau_+(A)$ cannot be efficiently computed since we do not have an efficient description of the feasible set $\{  L \text{ linear} \; : \; L(R) \leq 1 \; \forall R \in \cA_+(A) \}$, even though \eqref{eq:tau+intro} is a convex optimization problem. We thus propose a semidefinite programming relaxation, denoted $\tau_+^{\sos}(A)$ which is obtained by relaxing the constraint $L\leq 1$ on $\cA_+(A)$ using sum-of-squares methods (the exact definition of this relaxation is presented in more details in Section \ref{sec:tau+sdprelaxation}). We study various properties of the quantities $\tau_+(A)$ and $\tau_+^{\sos}(A)$ and we show for example that they are invariant under diagonal scaling and that they satisfy many of the structural properties satisfied by the nonnegative rank (subadditivity, etc.), cf. Theorem \ref{thm:tau+properties}.

We then compare $\tau_+$ and $\tau_+^{\sos}$ with existing bounds on the nonnegative rank and we show that they have very interesting connections to well-known combinatorial bounds. Indeed we show that $\tau_+(A)$ can be understood as a non-combinatorial version of the \emph{fractional chromatic number} of the \emph{rectangle graph} of $A$ (also called the \emph{fractional rectangle cover number} of $A$), while $\tau_+^{\sos}(A)$ is the non-combinatorial equivalent of the (complement) \emph{Lov\'{a}sz theta number} of the rectangle graph of $A$ \cite{fiorini2013combinatorial}. In fact we show that:
\[ \tau_+(A) \geq \chi_{\fra}(\RG(A)) \quad \text{ and } \quad \tau_+^{\sos}(A) \geq \bartheta(RG(A)), \]
where $\RG(A)$ denotes the rectangle graph associated to $A$ and $\chi_{\fra}$ and $\bartheta$ denote, respectively, the \emph{fractional chromatic number} and the (complement) Lov\'{a}sz theta number (more details concerning the definition of rectangle graph and the various graph parameters are in Section \ref{sec:tau+comparison-combinatorial-lbs}).

Finally we compare our new lower bounds with other norm-based (non-combinatorial) bounds on $\rank_+(A)$ such as the ones proposed in \cite{fawzi2012new} or \cite{braun2012approximation} (see also Lemma 4 in \cite{rothvoss2013matching}). Using the ``self-scaled'' property of our bound, we prove a general result showing that $\tau_+$ always yields better bounds that any such norm-based method.


\paragraph{Organization} The paper is organized as follows: In Section~\ref{sec:rank+matrices} we consider the nonnegative rank of matrices where we study the quantity $\tau_+$ as well as its semidefinite programming relaxation $\tau_+^{\sos}$.  We prove various properties on these two quantities and we compare them with existing combinatorial and norm-based bounds on the nonnegative rank. We conclude the section with some numerical examples illustrating the performance of the lower bound. In Section~\ref{sec:rank+tensors} we discuss the generalization of the nonnegative rank lower bound to tensors and we evaluate it numerically on an example. Finally, in Section~\ref{sec:cprank} we deal with the problem of cp-rank for completely positive matrices: we present the definition of the lower bound as well as its semidefinite programming relaxation and we explore some of its interesting properties. We show the surprising fact that the lower bound is always at least as good as the plain rank lower bound and we also discuss connections with combinatorial lower bounds. We conclude the section with some numerical experiments.

We provide Matlab scripts for the numerical examples shown in this paper at the URL \url{http://www.mit.edu/~hfawzi}. The scripts make use of the Yalmip package \cite{YALMIP} for solving the semidefinite programs.

\paragraph{Notations}
We denote by $\S^n$ the space of real symmetric $n\times n$ matrices, and by $\S^n_+$ the cone of real symmetric positive semidefinite matrices. If $A \in \RR^{m\times n}$ is a $m\times n$ matrix we define $a=\vec(A) \in \RR^{mn}$ to be the vector of length $mn$ obtained by stacking all the columns of $A$ on top of each other. Recall that if $A$ and $B$ are matrices of size $m\times n$ and $m'\times n'$ respectively, then the Kronecker product $A \otimes B$ is a matrix of size $mm'\times nn'$ matrix defined as follows:
\[
A \otimes B = \begin{bmatrix} A_{1,1} B & A_{1,2} B & \dots & A_{1,n} B\\
A_{2,1} B & A_{2,2} B & \dots & A_{2,n} B\\
\vdots & \vdots & & \vdots\\
A_{m,1} B & A_{n,2} B & \dots & A_{m,n} B \end{bmatrix} \in \RR^{mm'\times nn'}.
\]
When $A,X,B$ are matrices of appropriate sizes, we have the following identity:
\[ \vec(AXB) = (B^T \otimes A) \vec(X). \]
We define the following partial order on the indices of a matrix $A \in \RR^{m\times n}$:
\[ (i,j) \leq (k,l) \; \Leftrightarrow \; i\leq k \text{ and } j \leq l, \]
and we write $(i,j) < (k,l)$ if $i < k$ and $j < l$.
If $n$ is an integer, we let $[n]:=\{1,\dots,n\}$.

We use the notation $(\RR^d)^*$ to denote the dual space of $\RR^d$ which consists of linear functionals on $\RR^d$. We recall some terminology from convex analysis \cite{rockafellar1997convex}:  If $C$ is a convex set in $\RR^d$, we denote by $C^{\circ} \subset (\RR^d)^*$ the \emph{polar} of $C$ defined by: $C^{\circ} = \{ \ell \in (\RR^d)^* \; : \; \ell(x) \leq 1 \; \forall x \in C\}$. The \emph{support function} $S_C:(\RR^d)^* \rightarrow \RR$ of a convex set $C$ is defined as $S_C(\ell) = \max_{x \in C} \ell(x)$. The Minkowski gauge function of $C$ is defined as $p_C(x) = \min\{t > 0 \; : \; x \in tC\}$.



\section{Nonnegative rank of matrices}
\label{sec:rank+matrices}


\subsection{Primal and dual formulations for $\tau_+$}

For a nonnegative matrix $A \in \RR^{m\times n}_+$ , recall the following definitions from the introduction:
\begin{definition}
Given a nonnegative matrix $A \in \RR^{m\times n}_+$, we define $\cA_+(A)$ to be the set of rank-one nonnegative matrices $R$ that satisfy $0\leq R \leq A$:
\[ \cA_+(A) := \Bigl\{ R \in \RR^{m\times n} \; : \; \rank R \leq 1 \; \text{ and } \; 0\leq R \leq A \Bigr\}. \]
We also let
\begin{equation}
\label{eq:tau+max}
 \tau_+(A) := \max_{L \text{ linear}} \;\; L(A) \quad \text{ subject to } \quad L(R) \leq 1 \;\;\; \forall R \in \cA_{+}(A).
\end{equation}
\end{definition}
\begin{theorem}
For $A \in \RR^{m\times n}_+$ we have $\rank_+(A) \geq \tau_+(A)$.
\end{theorem}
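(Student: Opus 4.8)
The plan is to specialize the general lower-bounding argument sketched in the introduction to the case $K = \RR^{m\times n}_+$ and $V = \{R \in \RR^{m\times n} : \rank R \leq 1\}$. With these choices $\rank_{K,V} = \rank_+$, $\cA_{K,V}(A) = \cA_+(A)$ and $\tau_{K,V} = \tau_+$, so the statement is in fact a direct instance of the general theorem stated above; for completeness I would nonetheless spell out the short self-contained argument.

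First I would fix $r := \rank_+(A)$ together with a nonnegative factorization $A = \sum_{i=1}^r u_i v_i^T$, where $u_i \in \RR^m_+$ and $v_i \in \RR^n_+$, and set $R_i := u_i v_i^T$. Each $R_i$ is elementwise nonnegative and has rank at most one, and because the remaining terms are themselves nonnegative we get $A - R_i = \sum_{j \neq i} u_j v_j^T \geq 0$ entrywise. Hence $0 \leq R_i \leq A$, which is exactly the condition $R_i \in \cA_+(A)$, for every $i = 1,\dots,r$.

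Next, let $L$ be any linear functional feasible for \eqref{eq:tau+max}, i.e.\ satisfying $L(R) \leq 1$ for all $R \in \cA_+(A)$. Using linearity of $L$ and the previous step,
\[ L(A) = \sum_{i=1}^r L(R_i) \leq \sum_{i=1}^r 1 = r. \]
Since this holds for every feasible $L$, taking the supremum over feasible $L$ yields $\tau_+(A) \leq r = \rank_+(A)$, which is the claim.

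I do not expect a real obstacle here: the whole content is the observation—already isolated in the introduction—that every rank-one atom appearing in a nonnegative factorization of $A$ lies between $0$ and $A$ in the entrywise order, hence belongs to $\cA_+(A)$. The only points deserving a remark are the degenerate ones: a term $R_i = 0$ is harmless since $0 \in \cA_+(A)$ and $L(0) = 0$, and $A = 0$ trivially gives $\tau_+(A) = 0 = \rank_+(A)$. If one wishes to avoid asserting that the maximum in \eqref{eq:tau+max} is attained, it suffices to read it as a supremum: the inequality $L(A) \leq r$ for each feasible $L$ is all that the argument uses.
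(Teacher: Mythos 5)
Your proof is correct and follows the paper's argument exactly: take an optimal nonnegative factorization, observe each rank-one term lies in $\cA_+(A)$, and apply a feasible $L$ to get $L(A)\leq r$. The extra care about reading the maximum as a supremum and the degenerate cases is harmless but not needed beyond what the paper does.
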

\begin{proof}
Let $A = \sum_{i=1}^r R_i$ be a nonnegative factorization of $A$ with $r=\rank_+(A)$ and $R_i \geq 0$ are rank-one. Then necessarily each $R_i$ satisfies $R_i\leq A$ and thus $R_i \in \cA_+(A)$ for all $i$. Hence if $L$ is the optimal solution in the definition of $\tau_+(A)$ we get:
\[ \tau_+(A) = L(A) = \sum_{i=1}^r L(R_i) \leq r = \rank_+(A). \]
\if\mapr1\qed\fi
\end{proof}

\paragraph{Minimization formulation of $\tau_+$} Using convex duality, one can obtain a dual formulation of $\tau_+(A)$ as the solution of a certain minimization problem. In fact the next lemma shows that $\tau_+(A)$ is nothing but the \emph{atomic norm} \cite{chandrasekaran2012convex} associated to the set of atoms $\cA_+(A)$. This interpretation of $\tau_+(A)$ will be very useful later when studying its properties.
\begin{lemma}
\label{lem:duality}
If $A \in \RR^{m\times n}_+$ then we have:
\begin{equation}
 \label{eq:tau+min}
 \tau_+(A) = \min \{ t > 0 \; : \; A \in t \conv(\cA_+(A)) \}.
\end{equation}
In other words, $\tau_+(A)$ is the Minkowski gauge function of $\conv(\cA_+(A))$, evaluated at $A$.
\end{lemma}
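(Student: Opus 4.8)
The plan is to recognize the right-hand side of \eqref{eq:tau+min} as the Minkowski gauge $p_{\conv(\cA_+(A))}(A)$ and to prove equality with $\tau_+(A)$ via convex duality, using the polarity relationship between support functions and gauge functions. The key structural facts I will need are: (i) $\cA_+(A)$ is a compact set containing $0$ (compactness follows from the bounds $0 \le R \le A$, which confine $\cA_+(A)$ to a bounded set, and closedness from the fact that the rank-$\le 1$ condition together with nonnegativity is closed); (ii) $\conv(\cA_+(A))$ is therefore a compact convex set containing the origin; and (iii) $A$ itself belongs to the conic hull of $\cA_+(A)$, so the gauge is finite — indeed, writing any nonnegative factorization $A = \sum_{i=1}^r R_i$ with $R_i \in \cA_+(A)$ exhibits $A \in r\conv(\cA_+(A))$, so $p_{\conv(\cA_+(A))}(A) \le \rank_+(A) < \infty$.

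The main argument is the following chain of identities. First, by the definition of the gauge and since $\conv(\cA_+(A))$ is closed, $p_{\conv(\cA_+(A))}(A) = \min\{t > 0 : A \in t\,\conv(\cA_+(A))\}$, which is exactly the right-hand side of \eqref{eq:tau+min}. Next I invoke the standard convex-analytic duality: for a closed convex set $C$ containing the origin, the gauge $p_C$ and the support function of the polar $S_{C^\circ}$ coincide, i.e. $p_C(x) = S_{C^\circ}(x) = \max_{\ell \in C^\circ} \ell(x)$. Applying this with $C = \conv(\cA_+(A))$ gives
\[
p_{\conv(\cA_+(A))}(A) = \max_{\ell \,\in\, \conv(\cA_+(A))^{\circ}} \ell(A).
\]
It then remains to identify the polar of $\conv(\cA_+(A))$ with the feasible set in \eqref{eq:tau+max}. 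But $\ell(R) \le 1$ for all $R \in \cA_+(A)$ if and only if $\ell(R) \le 1$ for all $R$ in the convex hull of $\cA_+(A)$ (taking convex combinations preserves the inequality, and $\cA_+(A) \subseteq \conv(\cA_+(A))$ gives the converse), so $\conv(\cA_+(A))^{\circ} = \{L \text{ linear} : L(R) \le 1 \;\forall R \in \cA_+(A)\}$. Substituting, the right-hand side becomes $\max_L L(A)$ subject to $L(R)\le 1$ for all $R \in \cA_+(A)$, which is precisely $\tau_+(A)$ by \eqref{eq:tau+max}.

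The only subtlety — and the step I expect to require the most care — is the topological hygiene needed to apply the gauge/support-function duality cleanly: one must check that $\conv(\cA_+(A))$ is closed (not merely bounded), so that the infimum defining the gauge is attained and the biconjugate identity $p_C = S_{C^\circ}$ holds without a closure operation. Closedness follows because $\cA_+(A)$ is compact, hence its convex hull in finite dimensions is compact by Carathéodory's theorem. I should also briefly address the degenerate case $A = 0$, where $\cA_+(A) = \{0\}$ and both sides equal $0$ (with the convention that the gauge of $0$ is $0$); for $A \ne 0$ the maximum in \eqref{eq:tau+max} is attained and positive, matching the finite positive value of the gauge. With these points in place the two formulations agree, proving the lemma.
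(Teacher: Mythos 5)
Your proof is correct and takes essentially the same route as the paper: both recognize $\tau_+(A)$ as the support function of $\conv(\cA_+(A))^{\circ}$ and invoke the gauge/support-function polarity (Rockafellar, Thm.\ 14.5) to identify it with the Minkowski gauge of $\conv(\cA_+(A))$. The extra checks you carry out (compactness of $\cA_+(A)$, closedness of the convex hull via Carath\'eodory, and the $A=0$ case) are sound and merely make explicit hypotheses the paper leaves implicit.
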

\begin{proof}
Observe that Equation \eqref{eq:tau+max} expresses the fact that $\tau_+(A)$ is the support function of $\conv(\cA_+(A))^{\circ}$, evaluated at $A$. Theorem 14.5 in \cite{rockafellar1997convex} shows that the support function of the polar $C^{\circ}$ of a closed convex set $C$ is equal to the Minkowski gauge function of $C$. Thus it follows that $\tau_+(A)$ is equal to the Minkowski gauge function of $\conv(\cA_+(A))$, evaluated at $A$, which is precisely Equation \eqref{eq:tau+min}.
\if\mapr1\qed\fi
\end{proof}

The next example illustrates the geometric picture underlying the atomic norm formulation of $\tau_+(A)$.
\begin{example}
Assume $A$ is a $2\times 2$ diagonal matrix $A = \diag(a_1,a_2)$ where $a_i \geq 0$. In this case one can easily verify that $\cA_+(A)$ is given by:
\begin{equation}
\label{eq:A+diagonal}
\begin{aligned}
 \cA_+(A) &= \left\{ R \in \RR^{2\times 2} \; : \; \rank R \leq 1 \text{ and } 0 \leq R \leq \begin{bmatrix} a_1 & 0\\ 0 & a_2 \end{bmatrix}  \right\}\\
&= \left\{ \begin{bmatrix} x & 0\\ 0 & 0 \end{bmatrix} \text{ with } 0\leq x \leq a_1 \right\} \cup \left\{ \begin{bmatrix} 0 & 0\\ 0 & y \end{bmatrix} \text{ with } 0\leq y \leq a_2 \right\}.
\end{aligned}
\end{equation}

The convex hull of $\cA_+(A)$ (projected onto the diagonal elements) is depicted in Figure \ref{fig:example_atoms}. 
\begin{figure}[ht]
  \centering
  \includegraphics[width=7cm]{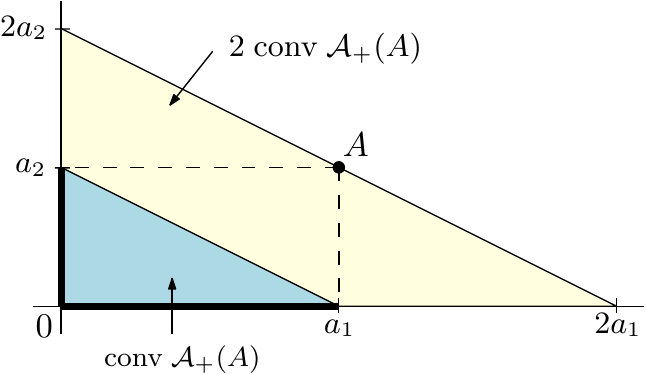}
  \caption{Depiction of the set of atoms $\cA_+(A)$ and its convex hull for a $2\times 2$ diagonal matrix $A$ (cf. Equation \eqref{eq:A+diagonal}). The set $\cA_+(A)$ consists of the two dark heavy lines joining the origin to $a_1$ and $a_2$. The convex hull of $\cA_+(A)$ is formed by the triangle $0,a_1,a_2$.}
  \label{fig:example_atoms}
\end{figure}
Observe that, when $a_1,a_2 > 0$, the smallest $t$ such that $A \in t \conv(\cA_+(A))$ is $t=2$ and thus $\tau_+(A) = 2=\rank_+(A)$. In fact we will see in the next section that when $A$ is a diagonal matrix, $\tau_+(A)$ is precisely equal to the number of nonzero elements on the diagonal, which is equal to $\rank_+(A)$.
\end{example}

We can see in this example the \emph{self-scaled} feature of the bound $\tau_+(A)$. This is in contrast with the existing norm-based methods to lower bound $\rank_+(A)$ such as \cite{fawzi2012new,braun2012approximation}, where the scaling of the atoms is \emph{independent} of $A$: for example in \cite{fawzi2012new} the scaling is done using the Frobenius norm (i.e., the set of atoms consists of rank-one matrices with Frobenius norm) and in \cite{braun2012approximation} the scaling is with respect to the entry-wise infinity norm. This feature is explained in more detail in Section \ref{sec:normbased} where we show that $\tau_+$ always yields better bounds than any such norm-based method.

\subsection{Semidefinite programming relaxation}
\label{sec:tau+sdprelaxation}

The quantity $\tau_+(A)$ defined in \eqref{eq:tau+max} cannot be efficiently computed in general, since we do not have an efficient description of the feasible $\{ L \text{ linear} \; : \; L(R) \leq 1 \; \forall R \in \cA_+(A) \}$ (note however that \eqref{eq:tau+max} is a convex optimization problem). In this section we introduce a semidefinite programming relaxation of $\tau_+(A)$. To do so we construct an over-relaxation of the set $\conv \cA_+(A)$ which can be represented using linear matrix inequalities. Recall that $\cA_+(A)$ is the intersection of the variety of rank-one matrices with the set $\{ R \in \RR^{m\times n} \; : \; 0\leq R \leq A \}$. The variety of rank-one matrices is described by the vanishing of $2\times 2$ minors, i.e.,
\begin{equation}
 \label{eq:2x2minors}
 R_{i,j} R_{k,l} - R_{i,l} R_{k,j} = 0
\end{equation}
for all $(1,1)\leq (i,j) < (k,l) \leq (m,n)$ (recall the partial order $(i,j) < (k,l) \Leftrightarrow i < k \text{ and } j < l$). Let $r =\vec(R)$ be the vector obtained by stacking all columns of $R$ and consider the following positive-semidefinite matrix:
\begin{equation}
\label{eq:matrix-rrT}
\begin{bmatrix} 1\\ r \end{bmatrix} \begin{bmatrix} 1 \\ r \end{bmatrix}^T = \begin{bmatrix} 1 & r^T \\ r & rr^T \end{bmatrix}.
\end{equation}
Note that $rr^T$ is a symmetric $mn \times mn$ matrix whose rows and columns are indexed by entries of $R$. The quadratic equations \eqref{eq:2x2minors} corresponding to the vanishing of $2\times 2$ minors of $R$ can be written as linear equations in the entries of $rr^T$, namely:
\[ (rr^T)_{ij,kl} - (rr^T)_{il,kj} = 0 \]
for $(1,1)\leq (i,j) < (k,l) \leq (m,n)$ (in the equation above, the subscripts ``$ij$'' and ``$kl$'' in $(rr^T)_{ij,kl}$ are the indices in $\{1,\dots,mn\}$ for the entries $(i,j)$ and $(k,l)$ respectively---we will use this slight abuse of notation in the paper to avoid having heavy notations). \\
Also note that the inequality $R \leq A$ implies that:
\begin{equation}
 \label{eq:diag-ineqs}
 (rr^T)_{ij,ij} \leq r_{ij} A_{ij}
\end{equation}
which is a linear inequality in the entries of the matrix \eqref{eq:matrix-rrT}. Using these two observations we have the following over-relaxation of $\conv(\cA_+(A))$:
\begin{equation}
 \label{eq:overrelaxationA}
 \conv(\cA_+(A)) \subseteq \cA_+^{\sos}(A)
\end{equation}
where
\begin{equation}
\label{eq:convAsos}
\begin{aligned}
\cA_+^{\sos}(A) = \Biggl\{ R \in \RR^{m\times n} \; : \; & \exists X \in \S^{mn} \; \text{ such that } \; \begin{bmatrix}
1 & \vec(R)^T\\
\vec(R) & X
\end{bmatrix} \succeq 0\\
& \qquad \text{ and } X_{ij,ij} \leq R_{ij} A_{ij} \quad \forall i \in [m],j \in [n]\\
& \qquad \text{ and } X_{ij,kl} - X_{il,kj} = 0 \quad \forall (1,1)\leq (i,j) < (k,l) \leq (m,n)\Biggr\}.
\end{aligned}
\end{equation}
If we define $\tau_+^{\sos}(A)$ as:
\[ \tau_+^{\sos}(A) = \min \{ t > 0 \; : \; A \in t\cA^{\sos}_+(A) \} \]
then we clearly have (by the inclusion \eqref{eq:overrelaxationA}):
\[ \tau_+^{\sos}(A) \leq \tau_+(A) \leq \rank_+(A). \]
Furthermore, the quantity $\tau_+^{\sos}(A)$ can be computed using semidefinite programming. Indeed, it is not difficult to show using the description \eqref{eq:convAsos} of $\cA_+^{\sos}(A)$ that we have:
\begin{equation}
\label{eq:tau+sos_min}
\begingroup
\renewcommand*{\arraystretch}{1.3}
\begin{array}{rrl}
\tau_+^{\sos}(A) &= 
\;\;\underset{t,X}{\text{min}} & t\\
&\text{s.t.} &
\begin{bmatrix} t & \vec(A)^T\\ \vec(A) & X \end{bmatrix} \succeq 0\\
&& X_{ij,ij} \leq A_{ij}^2 \quad \forall i\in[m],j\in[n]\\
&& X_{ij,kl} = X_{il,kj} \quad \forall (1,1)\leq (i,j) < (k,l) \leq (m,n)
\end{array}
\endgroup
\end{equation}
\paragraph{Duality and sum-of-squares interpretation}
The dual of the semidefinite program \eqref{eq:tau+sos_min} takes the form of a \emph{sum-of-squares} program, namely we have\footnote{The sum-of-squares program \eqref{eq:tau+sos_max} is actually the dual of a slightly different, but equivalent, formulation of \eqref{eq:tau+sos_min} where the inequality $X_{ij,ij} \leq A_{ij}^2$ is replaced by $X_{ij,ij} \leq A_{ij} Y_{ij}$ where $Y_{ij}$ are additional variables that are constrained by $Y_{ij} = A_{ij}$. Using this reformulation, the dual has a nice interpretation as a sum-of-squares relaxation of \eqref{eq:tau+max}. Also one can easily show that strong duality holds using Slater's condition.}:
\begin{equation}
\label{eq:tau+sos_max}
\tau_+^{\sos}(A) = 
\begin{array}[t]{ll}
\text{max} & L(A)\\
\text{s.t.} & L \text{ is a linear form}\\
& 1 - L(X) = SOS(X) + \sum_{ij} D_{ij} X_{ij} (A_{ij} - X_{ij})\;\; \text{ mod } I\\
& D_{ij} \geq 0\\
& SOS(X) \text{ is a sum-of-squares polynomial}
\end{array}
\end{equation}
Here $I$ is the ideal in $\RR[X_{11},\dots,X_{mn}]$ corresponding to the variety of $m\times n$ rank-one matrices, i.e., it is ideal generated by the $2\times 2$ minors $X_{ij} X_{kl} - X_{il} X_{kj}$. The sum-of-squares constraint in \eqref{eq:tau+sos_max}  means that the polynomials on each side of the equality are equal when $X$ is rank-one. Note that this sum-of-squares constraint can be rewritten more explicitly as requiring that:
\[ 1 - L(X) - \sum_{ij} D_{ij} X_{ij} (A_{ij} - X_{ij}) - \sum_{(i,j) < (k,l)} \nu_{ijkl} (X_{ij} X_{kl} - X_{il} X_{kj}) \text{ is a sum-of-squares} \]
where the parameters $\nu_{ijkl}$ are real numbers\footnote{One can show that the sum-of-squares polynomial cannot have degree more than 2 and the multipliers $\nu_{ijkl}$ are necessarily real numbers.}. It is clear that any such $L$ satisfies $L(X) \leq 1$ for all $X \in \cA_+(A)$. As such, \eqref{eq:tau+sos_max} is a natural sum-of-squares relaxation of \eqref{eq:tau+max}.




\paragraph{Zero entries in $A$} When the matrix $A$ has some entries equal to 0, the semidefinite program \eqref{eq:tau+sos_min} that defines $\tau_+^{\sos}(A)$ can be reduced by eliminating unnecessary variables. Let $S=\supp(A)=\{(i,j) \; : \; A_{i,j} > 0\}$ be the set of nonzero entries of $A$, and define $\pi:\RR^{m\times n} \rightarrow \RR^S$ to be the linear map that projects onto the entries in $S$.
Observe that, in the SDP \eqref{eq:tau+sos_min}, if $A_{i,j} = 0$ for some $(i,j)$ then necessarily $X_{ij,ij} = 0$. Thus by the positivity constraint this implies that the $ij$'th row and $ij$'th column of $X$ are identically zero, and one can thus eliminate this row and column from the program. Using this fact, one can show that $\tau_+^{\sos}(A)$ can be computed using the following reduced semidefinite program where the size of the matrix $X$ is now $|\supp(A)|\times |\supp(A)|$, instead of $mn\times mn$ (recall that $\pi(A)$ is the vectorization of $A$ where we only keep the nonzero entries of $A$):
\begin{equation}
\label{eq:tau+sos_min-reduced}
\begin{array}{rrl}
\tau_+^{\sos}(A) &= 
\;\;\underset{t,X}{\text{min}} & t\\
&\text{s.t.} &
\begin{bmatrix} t & \pi(A)^T\\ \pi(A) & X \end{bmatrix} \succeq 0\\
&& \forall (i,j) \text{ s.t. } A_{i,j} > 0: \; X_{ij,ij} \leq A_{ij}^2\\
&& \forall (1,1)\leq (i,j) < (k,l) \leq (m,n) \text{ s.t. } A_{i,j} A_{k,l} > 0 \text{ or } A_{i,l} A_{k,j} > 0:\\
&& \qquad
\begin{cases}
 \text{if } A_{i,l} A_{k,j}=0\; : \; X_{ij,kl} = 0\\
 \text{if } A_{i,j} A_{k,l}=0 \; : \; X_{il,kj} = 0\\
 \text{else } \;\; X_{ij,kl} - X_{il,kj} = 0
\end{cases}
\end{array}
\end{equation}

\subsection{Properties}

\label{sec:tau+properties}

In this section we explore some of the properties of $\tau_+(A)$ and $\tau_+^{\sos}(A)$. We show that $\tau_+(A)$ and $\tau_+^{\sos}(A)$ have many appealing properties (invariance under diagonal scaling, invariance under  permutation, etc.) which are not present in most of currently existing bounds on the nonnegative rank. The theorem below summarizes the desirable properties satisfied by $\tau_+(A)$ and $\tau_+^{\sos}(A)$.

\begin{theorem}
\label{thm:tau+properties}
Let $A \in \RR^{m\times n}_+$ be a nonnegative matrix.
\begin{enumerate}
\item Invariance under diagonal scaling: If $D_1$ and $D_2$ are diagonal matrices with strictly positive entries on the diagonal, then $\tau_+(D_1 A D_2) = \tau_+(A)$ and $\tau_+^{\sos}(D_1 A D_2) = \tau_{+}^{\sos}(A)$.
\item Invariance under permutation of rows or columns: If $P_1$ and $P_2$ are permutation matrices of size $m\times m$ and $n \times n$ respectively, then $\tau_+(P_1 A P_2) = \tau_+(A)$ and $\tau_+^{\sos}(P_1 A P_2) = \tau_+^{\sos}(A)$.
\item Subadditivity: If $B \in \RR^{m\times n}_+$ is a nonnegative matrix then:
\[ \tau_+(A+B) \leq \tau_+(A)+\tau_+(B) \quad \text{ and } \quad \tau_+^{\sos}(A+B) \leq \tau_+^{\sos}(A) + \tau_+^{\sos}(B). \]
\item Product: If $B \in \RR^{n \times p}_+$, then
\[ \tau_+(AB) \leq \min(\tau_+(A),\tau_+(B)) \quad \text{ and } \quad \tau_+^{\sos}(AB) \leq \min(\tau_+^{\sos}(A),\tau_+^{\sos}(B)). \]
\item Monotonicity: If $B$ is a submatrix of $A$ (i.e., $B = A[I,J]$ for some $I\subseteq [m]$ and $J\subseteq [n]$), then $\tau_+(B) \leq \tau_+(A)$ and $\tau_+^{\sos}(B) \leq \tau_+^{\sos}(A)$.
\item Block-diagonal composition: Let $B \in \RR^{m'\times n'}_+$ be another nonnegative matrix and define
\[ A \oplus B = \begin{bmatrix} A & 0\\ 0 & B \end{bmatrix}. \]
Then
\[ \tau_+(A\oplus B) = \tau_+(A) + \tau_+(B) \quad \text{ and } \quad
   \tau_+^{\sos}(A\oplus B) = \tau_+^{\sos}(A) + \tau_+^{\sos}(B) \]
\end{enumerate}
\end{theorem}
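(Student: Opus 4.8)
The unifying tool is the atomic-norm characterization from Lemma~\ref{lem:duality}: $\tau_+(A)$ is the Minkowski gauge of $\conv(\cA_+(A))$ evaluated at $A$, and $\tau_+^{\sos}(A)$ is the gauge of $\cA_+^{\sos}(A)$ evaluated at $A$. So in each case I would reduce the claimed identity or inequality to an inclusion statement about the atom sets $\cA_+(\cdot)$ (resp.\ $\cA_+^{\sos}(\cdot)$) under the relevant operation, and then invoke elementary properties of gauge functions. I will treat $\tau_+$ in detail and note that $\tau_+^{\sos}$ is handled identically, replacing $\cA_+$ by $\cA_+^{\sos}$ whose defining constraints \eqref{eq:convAsos} are visibly compatible with the same maps (diagonal scaling rescales the auxiliary variable $X$ by the corresponding squared factors, permutation permutes the index set, etc.).

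First, for \emph{diagonal scaling} (item 1): the map $R \mapsto D_1 R D_2$ is a linear bijection on $\RR^{m\times n}$ that preserves rank-one-ness, and since $D_1,D_2$ have strictly positive diagonal entries it maps the order interval $\{0\le R\le A\}$ bijectively onto $\{0\le R'\le D_1AD_2\}$; hence $D_1\cA_+(A)D_2 = \cA_+(D_1AD_2)$, and therefore $D_1\conv(\cA_+(A))D_2 = \conv(\cA_+(D_1AD_2))$. Applying the same linear map to the gauge-function minimization, the condition $A\in t\conv(\cA_+(A))$ is equivalent to $D_1AD_2\in t\conv(\cA_+(D_1AD_2))$, giving $\tau_+(A)=\tau_+(D_1AD_2)$. \emph{Permutation invariance} (item 2) is the special case where $D_1,D_2$ are replaced by permutation matrices $P_1,P_2$, and the identical argument applies: $P_1\cA_+(A)P_2 = \cA_+(P_1AP_2)$. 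For \emph{subadditivity} (item 3): if $R\in\cA_+(A)$ and $R'\in\cA_+(B)$ then $R+R'$ need \emph{not} be rank-one, so I cannot add atom sets directly; instead I use the gauge formulation. Write $A = \tau_+(A)\sum_i\lambda_i R_i$ and $B = \tau_+(B)\sum_j\mu_j R_j'$ as convex combinations of atoms (by Lemma~\ref{lem:duality} and compactness of $\conv(\cA_+(A))$); since $R_i\le A\le A+B$ we have $R_i\in\cA_+(A+B)$, and likewise $R_j'\in\cA_+(A+B)$. Then $A+B$ is a nonnegative combination of these atoms with total weight $\tau_+(A)+\tau_+(B)$, so $A+B\in(\tau_+(A)+\tau_+(B))\conv(\cA_+(A+B))$, which is exactly $\tau_+(A+B)\le\tau_+(A)+\tau_+(B)$.

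For the \emph{product} bound (item 4): if $A = \tau_+(A)\sum_i\lambda_i R_i$ with $R_i = u_iv_i^T$ rank-one nonnegative and $0\le R_i\le A$, then $R_iB = u_i(v_i^TB)$ is rank-one and nonnegative, and $R_i\le A$ together with $B\ge0$ gives $0\le R_iB\le AB$, so $R_iB\in\cA_+(AB)$; hence $AB = \tau_+(A)\sum_i\lambda_i(R_iB)$ exhibits $AB\in\tau_+(A)\conv(\cA_+(AB))$, giving $\tau_+(AB)\le\tau_+(A)$. The symmetric argument using a decomposition of $B$ gives $\tau_+(AB)\le\tau_+(B)$, hence the minimum. \emph{Monotonicity} (item 5): projecting onto a submatrix, $R\mapsto R[I,J]$, sends rank-one matrices to rank-one matrices and $\{0\le R\le A\}$ into $\{0\le R'\le A[I,J]\}$, so it maps $\cA_+(A)$ into $\cA_+(B)$; applying it to a decomposition $A = \tau_+(A)\sum_i\lambda_iR_i$ shows $B\in\tau_+(A)\conv(\cA_+(B))$, so $\tau_+(B)\le\tau_+(A)$. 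For \emph{block-diagonal composition} (item 6): the inequality $\tau_+(A\oplus B)\le\tau_+(A)+\tau_+(B)$ follows from items already proved (write $A\oplus B$ as a sum of two block matrices, one with $B$-block zero and one with $A$-block zero, and use subadditivity together with the fact that padding by zero rows/columns leaves $\tau_+$ unchanged — itself a consequence of monotonicity in one direction, and of the atom-set identity $\cA_+(A\oplus 0) = \cA_+(A)\oplus 0$ in the other). The reverse inequality $\tau_+(A\oplus B)\ge\tau_+(A)+\tau_+(B)$ is the substantive point: a rank-one nonnegative $R$ with $0\le R\le A\oplus B$ must be supported entirely in the $A$-block or entirely in the $B$-block (its support is a combinatorial rectangle contained in $\supp(A)\sqcup\supp(B)$, which cannot straddle the two blocks), so $\cA_+(A\oplus B) = (\cA_+(A)\oplus 0)\cup(0\oplus\cA_+(B))$; feeding this into the dual $\max$ formulation \eqref{eq:tau+max}, an optimal $L$ for $A$ and an optimal $L'$ for $B$ combine into the functional $(X,Y)\mapsto L(X)+L'(Y)$ on the block space, which satisfies the constraint on every atom of $A\oplus B$ and achieves value $\tau_+(A)+\tau_+(B)$.

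The step I expect to be the main obstacle is the \emph{lower} bound in item 6 for the SDP relaxation $\tau_+^{\sos}$. For $\tau_+$ itself the combinatorial "an indecomposable nonnegative matrix bounded by a block-diagonal matrix lives in one block" argument is clean, but $\cA_+^{\sos}(A\oplus B)$ is defined by a semidefinite lifting, and I need to argue that the off-block entries of the lifted variable $X$ are forced to vanish. This should follow by the same mechanism used in the "zero entries in $A$" reduction \eqref{eq:tau+sos_min-reduced}: wherever $(A\oplus B)_{ij}=0$ the constraint $X_{ij,ij}\le(A\oplus B)_{ij}^2=0$ plus positive semidefiniteness of the $2\times2$ minor $\begin{bmatrix}1 & R_{ij}\\ R_{ij} & X_{ij,ij}\end{bmatrix}$ kills the whole $ij$-row and column of $X$ (and forces $R_{ij}=0$), so the lifted variable block-decomposes and $\tau_+^{\sos}$ is additive for the same reason $\tau_+$ is. I would want to write that forcing-of-zeros argument out carefully, since it is the one place where the SDP structure, rather than a soft gauge-function manipulation, does the work.
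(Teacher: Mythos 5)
Your treatment of the $\tau_+$ half is correct and, with one pleasant exception, matches the paper's: the atom-set transformation arguments for items 1, 2, 5, the inclusion $\cA_+(A)\cup\cA_+(B)\subseteq\cA_+(A+B)$ for item 3, the map $R\mapsto RB$ into $\cA_+(AB)$ for item 4, and the decomposition $\cA_+(A\oplus B) = (\cA_+(A)\oplus 0)\cup(0\oplus\cA_+(B))$ for item 6 are all exactly what the paper does. The one genuine variant is your proof of $\tau_+(A\oplus B)\ge\tau_+(A)+\tau_+(B)$: you build the dual certificate $L\oplus L'$ and check it on the (decomposed) atom set, whereas the paper works on the primal side, splitting a convex combination of atoms of $A\oplus B$ into its $A$-supported and $B$-supported parts. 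Both are correct and essentially dual to each other; the paper's primal version has the small advantage of not needing to argue that the dual supremum in \eqref{eq:tau+max} is attained.

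The blanket claim that "$\tau_+^{\sos}$ is handled identically, replacing $\cA_+$ by $\cA_+^{\sos}$" is too casual for two of the items, and in one of those it hides the step the paper has to work for. For the \emph{product} property (item 4), the map $R\mapsto RB$ lifts to $X\mapsto (B^T\otimes I_m)X(B\otimes I_m)$ on the auxiliary variable; the PSD block and the $2\times 2$ minor constraints transfer painlessly, but the diagonal constraint does not. To prove $\tilde X_{ij,ij}\le(AB)_{ij}^2$ (equivalently, $\tilde X_{ij,ij}\le R'_{ij}(AB)_{ij}$ in the $\cA_+^{\sos}$ formulation) you need an off-diagonal bound on the lifted variable — the paper's Lemma~\ref{lem:offdiag-ineqs}, namely that any feasible $X$ satisfies $X_{ij,kl}\le A_{ij}A_{kl}$, itself extracted from a $2\times 2$ principal minor of the PSD constraint — followed by a summation/Cauchy--Schwarz step. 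None of this is "visible" from the transformation of the atom set; if you run your plan you will hit this wall.

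For \emph{block-diagonal} additivity (item 6) on the $\tau_+^{\sos}$ side, you correctly identify the zero-forcing step (the reduced SDP \eqref{eq:tau+sos_min-reduced} kills the cross-block entries of $X$ because the $2\times 2$ minor constraint with one of $A_{il}A_{kj}$, $A_{ij}A_{kl}$ vanishing forces the corresponding $X$-entry to $0$), and you are right to flag this as the place where SDP structure matters. But the conclusion that $\tau_+^{\sos}$ "is additive for the same reason $\tau_+$ is" does not follow: for $\tau_+$ the point is that the atom set decomposes as a union, whereas $\cA_+^{\sos}$ is convex and does not. After the zero-forcing you are left with a single SDP whose feasible matrix has the arrow-plus-block-diagonal form
\[
\begin{bmatrix} t & a^T & b^T\\ a & X & 0\\ b & 0 & X' \end{bmatrix}\succeq 0,
\]
and the missing ingredient is the decomposition $t = t_1+t_2$ with $\bigl[\begin{smallmatrix}t_1 & a^T\\ a & X\end{smallmatrix}\bigr]\succeq 0$ and $\bigl[\begin{smallmatrix}t_2 & b^T\\ b & X'\end{smallmatrix}\bigr]\succeq 0$ — the paper invokes \cite{grone1984positive} for this chordal-sparsity decomposition. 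Without that lemma the block-diagonal SDP does not separate into two independent programs, and the lower bound $\tau_+^{\sos}(A\oplus B)\ge\tau_+^{\sos}(A)+\tau_+^{\sos}(B)$ is not established.
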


Before proving the theorem, we look at some of the existing bounds on the nonnegative rank in light of the properties listed in the theorem above.
\begin{itemize}
\item {\bf Norm-based lower bounds:} In a previous paper \cite{fawzi2012new} we introduced a lower bound on $\rank_+(A)$ based on the idea of a \emph{nonnegative nuclear norm}. We showed that:
\begin{equation}
 \label{eq:lbnu+}
 \rank_+(A) \geq \left(\frac{\nu_+(A)}{\|A\|_F}\right)^2
\end{equation}
where $\nu_+(A)$ is the \emph{nonnegative nuclear norm} of $A$ defined by:
\[ \nu_+(A) = \max_{W \in \RR^{m \times n}} \; \left\{ \; \langle A, W \rangle \; : \; \begin{bmatrix} I & -W\\ -W^T & I \end{bmatrix} \text{ copositive} \; \right\} \]
and where $\|A\|_F = \sqrt{\sum_{i,j} A_{i,j}^2}$ is the Frobenius norm.
We showed with an example (cf. Example 5 in \cite{fawzi2012new}) that the lower bound can change when applying diagonal scaling to the matrix $A$; in other words the bound \eqref{eq:lbnu+} is not invariant under diagonal scaling.\\
Also it is known that nuclear-norm based lower bounds are not monotone in general, i.e., the value of the bound can be greater when applied to a submatrix of $A$ (cf. \cite[Section 2.3.2]{lee2009lower}).
\item {\bf Lower bounds from information theory:} Information theoretic quantities can be used to get lower bounds on the nonnegative rank; in fact such bounds were used recently in  \cite{braverman2013information,braun2013common} in the context of extended formulations of polytopes. For example the Shannon mutual information as well as Wyner's common information \cite{wyner1975common} provide lower bounds on the nonnegative rank. However as we show below these bounds are not invariant under diagonal scaling. We first recall the definition of these lower bounds. Let $P \in \RR^{\cX \times \cY}_+$ be a nonnegative matrix such that $\sum_{x,y} P(x,y) = 1$ and let $(X,Y)$ be a pair of random variables distributed according to $P$:
\[ \Pr[X=x,Y=y] = P(x,y)  \quad \forall x \in \cX, y \in \cY. \]
Recall from the introduction that a nonnegative factorization of $P$ of size $k$ expresses the fact that $(X,Y)$ is a mixture of $k$ independent random variables on $\cX \times \cY$, i.e., we can write:
\[ \Pr[X=x,Y=y] = \sum_{w=1}^k \Pr[W=w] \cdot \Pr[X=x|W=w] \cdot \Pr[Y=y|W=w], \]
where $W$ is the mixing distribution, taking values in $\{1,\dots,k\}$ and $X$ and $Y$ are conditionally independent given $W$.
 Using this interpretation, the nonnegative rank of $P$ can thus be formulated as:
\begin{equation}
 \label{eq:rank+itformulation}
 \rank_+(P) = \min_{\substack{X-W-Y\\ (X,Y) \sim P}} |\supp(W)|,
\end{equation}
where $|\supp(W)|$ is the number of values that $W$ takes, and where the Markov chain constraint $X-W-Y$ means that $X$ and $Y$ are conditionally independent given $W$.\\
Using the formulation \eqref{eq:rank+itformulation} one can easily obtain information-theoretic lower bounds on $\rank_+(P)$. In fact one can show using simple information-theoretic inequalities that
\[ I(X;Y) \leq C(X;Y) \leq \log \rank_+(P) \]
where $I(X;Y)$ is the Shannon mutual information and $C(X;Y)$ is Wyner's common information \cite{wyner1975common} defined by:
\[ C(X;Y) = \min_{X-W-Y} I(XY;W). \]
The lower bounds $I(X;Y)$ and $C(X;Y)$ however are not invariant under diagonal scaling (here the scaling is followed by a global normalization of the matrix to make the sum of its entries equal to one): indeed if $P$ is a diagonal matrix, $P = \diag(p)$ where $p > 0$ and $1^T p = 1$ then one can show that
\begin{equation} \label{eq:ICdiag} I(X;Y) = C(X;Y) = H(p) \end{equation}
where $H$ denotes Shannon entropy, $H(p) = -\sum_{i} p_i \log p_i$. Now note that any quantity defined on nonnegative matrices and which is invariant under diagonal scaling should take the same value on diagonal matrices that have strictly positive entries on the diagonal (this is because if $P$ and $Q$ are diagonal matrices then we can transform $P$ into $Q$ by a diagonal scaling). Equation \eqref{eq:ICdiag} however shows that the quantities $I(X;Y)$ and $C(X;Y)$ depend on the specific values on the diagonal and thus are  not invariant under diagonal scaling.
\end{itemize}

We now turn to the proof of Theorem \ref{thm:tau+properties}. We prove below the first property (invariance under diagonal scaling) and we prove the remaining properties in Appendix \ref{sec:appendix-props}.

\begin{proof}[Proof of invariance under diagonal scaling]
\begin{enumerate}
\item We first prove the property for $\tau_+$. Let $A' = D_1 A D_2$ where $D_1$ and $D_2$ are the two diagonal matrices with strictly positive entries on the diagonal. Observe that the set of atoms $\cA_+(A')$ of $A'$ can be obtained from the atoms $\cA_+(A)$ of $A$ as follows:
\begin{equation}
\label{eq:atomsA'-diagonalscaling}
\cA_+(A') = \{ D_1 R D_2 \; : \; R \in \cA_+(A) \} =: D_1 \cA_+(A) D_2.
\end{equation}
Indeed, if $R$ is rank-one and $0 \leq R \leq A$ then clearly $D_1 R D_2$ is rank-one and satisfies $0 \leq D_1 R D_2 \leq D_1 A D_2 = A'$ thus $D_1 R D_2 \in \cA_+(A')$. Conversely if $R' \in \cA_+(A')$, then by letting $R = D_1^{-1} R D_2^{-1}$ we see that $R' = D_1 R D_2$ with $R$ rank-one and $0\leq R \leq A$. Thus this shows equality \eqref{eq:atomsA'-diagonalscaling}.
Thus we have:
\begin{equation}
\label{eq:chain-equalities-invariance-diagona-scaling}
\begin{aligned}
\tau_+(A') &= \min \left\{ t \; : \; A' \in t \cdot \conv(\cA_+(A')) \right\}\\
           &= \min \left\{ t \; : \; D_1 A D_2 \in t \cdot \conv(D_1 \cA_+(A) D_2) \right\}\\
           &= \min \left\{ t \; : \; D_1 A D_2 \in t \cdot D_1 \conv(\cA_+(A)) D_2 \right\}\\
           &= \min \left\{ t \; : \; A \in t \cdot  \conv(\cA_+(A)) \right\}\\
           &= \tau_+(A).
\end{aligned}
\end{equation}
\item We now prove the property for the SDP relaxation $\tau_+^{\sos}$. For this we use the maximization formulation of $\tau_+^{\sos}$ given in Equation \eqref{eq:tau+sos_max}. Let $L$ be the optimal linear form in \eqref{eq:tau+sos_max} for the matrix $A$, i.e., $L(A) = \tau_+^{\sos}(A)$ and $L$ satisfies:
\begin{equation}
 \label{eq:certL}
 1 - L(X) = SOS(X) + \sum_{ij} D_{ij} X_{ij} (A_{ij} - X_{ij}) \bmod I.
\end{equation}
Define the linear polynomial $L'(X) = L(D_1^{-1} X D_2^{-1})$. It is straightforward to see from \eqref{eq:certL} that $L'$ satisfies:
\[ 1 - L'(X) = SOS(D_1^{-1} X D_2^{-1}) + \sum_{ij} \frac{D_{ij}}{(D_1)_{ii}^2 (D_2)_{jj}^2} X_{ij} ( A'_{ij} - X_{ij} ) \bmod I. \]
Thus this shows that $L'$ is feasible for the sum-of-squares program \eqref{eq:tau+sos_max} for the matrix $A'$. Thus since $L'(A') = L(A) = \tau_+^{\sos}(A)$, we get that $\tau_+^{\sos}(A') \geq \tau_+^{\sos}(A)$.  With the same reasoning we can show that:
\[ \tau_+^{\sos}(A) = \tau_+^{\sos}(D_1^{-1}(D_1 A D_2) D_2^{-1}) \geq \tau_+^{\sos}(D_1 A D_2) = \tau_+^{\sos}(A'). \]
Thus we have $\tau_+^{\sos}(A') = \tau_+^{\sos}(A)$.

\end{enumerate}
\if\mapr1\qed\fi
\end{proof}



%
%

\subsection{Discussion on the SDP relaxation}

\paragraph{Additional constraints} 
The semidefinite program \eqref{eq:tau+sos_min} that defines $\tau_+^{\sos}(A)$ can potentially be strengthened by including additional constraints on the matrix $X$. In this paragraph we discuss how these might affect the value of the lower bound.
\begin{itemize}
\item First observe that the constraint \eqref{eq:diag-ineqs} is a special case $k=i,l=j$ of the constraints
\[ (rr^T)_{ij,kl} \leq r_{ij} A_{kl}, \]
for any $i,j,k,l$. This would correspond in the semidefinite program \eqref{eq:tau+sos_min} to adding the inequalities
\[ X_{ij,kl} \leq A_{ij} A_{kl}. \]
In Lemma \ref{lem:offdiag-ineqs} of Appendix \ref{app:product} we show that these inequalities are automatically verified by any $X$ feasible for the semidefinite program \eqref{eq:tau+sos_min}. Thus adding these inequalities does not affect the value of the bound.

\item Another constraint that one can impose on $X$ is \emph{elementwise nonnegativity}, since $rr^T$ in \eqref{eq:matrix-rrT} is nonnegative. We investigated the effect of this constraint numerically but on all the examples we tried the value of the bound did not change (up to numerical precision). We think however there might be specific examples where the value of the bound does change. Indeed as we show later in Section \ref{sec:tau+comparison-combinatorial-lbs} the quantity $\tau_+^{\sos}$ is closely related to the Lov{\'a}sz $\vartheta$ number. It is known in the case of $\vartheta$ that adding a nonnegativity constraint can affect the value of the SDP, even though the change is often not very significant. The version of the $\vartheta$ number with an additional nonnegativity constraint is sometimes denoted by $\vartheta^+$ and was first introduced by Szegedy in \cite{szegedy1994note} and extensive numerical experiments were done in \cite{dukanovic2007semidefinite} (see also \cite{meurdesoif2005strengthening}).
\item Another family of constraints that one could impose in the SDP comes from the following observation: If $0 \leq r \leq a$ (with $r=\vec(R)$ and $a=\vec(A)$) then we have for any $i,j,k,l$:
\[ (a-r)_{ij} (a-r)_{kl} \geq 0, \]
i.e.,
\[ a_{ij} a_{kl} - r_{ij} a_{kl} - r_{kl} a_{ij} + (rr^T)_{ij,kl} \geq 0. \]
In the semidefinite program \eqref{eq:tau+sos_min} these inequalities translate to:
\begin{equation}
 \label{eq:constraint2-t}
 X_{ij,kl} \geq (2-t) A_{ij} A_{kl}.
\end{equation}
We observed that on most matrices the constraint does not affect the value of the bound. However for some specific matrices of small size the value can change:
For example for the matrix
\[ A = \begin{bmatrix} 1 & 1\\ 1 & 1/2 \end{bmatrix} \]
we get the value $4/3$ without the constraint \eqref{eq:constraint2-t} whereas with the constraint we obtain $3/2$. 
\end{itemize}

Despite the possible improvements, the constraints mentioned here would make the size of the semidefinite programs much larger and we have observed that on most examples the value of the lower bound does not change much.
We have also noted that by including some of the additional constraints we lose some of the nice properties that the quantity $\tau_+^{\sos}(A)$ satisfies. For example if we include the last set of inequalities \eqref{eq:constraint2-t} described above, the lower bound is no longer additive for block-diagonal matrices.

\paragraph{Parametrization of the rank-one variety} We saw that the semidefinite programming relaxation of $\tau_+$ corresponds to relaxing the constraint $L(X) \leq 1 \; \forall X \in \cA_+(A)$ by the following sum-of-squares constraint:
\begin{equation}
\label{eq:1-Lcert}
 1-L(X) = SOS(X) + \sum_{ij} D_{ij} X_{ij} (A_{ij} - X_{ij}) \quad \forall X \in \RR^{m\times n} \text{ rank-one}
\end{equation}
where $SOS(X)$ is a sum-of-squares polynomial and $D_{ij}$ are nonnegative real numbers. One way to specify that the equality above has to hold for all $X$ rank-one is to require that the two polynomials on each side of the equality are equal \emph{modulo the ideal} $I$ of rank-one matrices. This is the approach we adopted when presenting the sum-of-squares program for $\tau_+^{\sos}(A)$ in \eqref{eq:tau+sos_max}.

%

Another approach to encode the constraint \eqref{eq:1-Lcert} is to \emph{parametrize} the variety of rank-one matrices: indeed we know that rank-one matrices $X$ have the form $X_{ij} = u_i v_j$ for all $i=1,\dots,m$ and $j=1,\dots,n$ where $u_i, v_j \in \RR$. Thus one way to guarantee that \eqref{eq:1-Lcert} holds is to ask that the following polynomial identity (in the ring $\RR[u_1,\dots,u_m,v_1,\dots,v_n]$) holds:
\begin{equation}
 \label{eq:1-Lcert_param}
 1 - \sum_{ij} L_{ij} u_i v_j = SOS(u,v) + \sum_{ij} D_{ij} u_i v_j (A_{ij} - u_i v_j).
\end{equation}
It can be shown that these two approaches (working modulo the ideal vs. using the parametrization of rank-one matrices) are actually identical and lead to the same semidefinite programs.


\subsection{Comparison with combinatorial lower bounds on nonnegative rank}
\label{sec:tau+comparison-combinatorial-lbs}
Many of the known bounds on the nonnegative rank are combinatorial and depend only on the sparsity pattern of the matrix $A$. These bounds are usually expressed as parameters of some graph constructed from $A$. In this section we explore the connection between the quantities $\tau_+(A)$ and $\tau_+^{\sos}(A)$ and these combinatorial quantities.

Let $A \in \RR^{m\times n}_+$ be a nonnegative matrix. A \emph{monochromatic rectangle} for $A$ is a rectangle $R=I\times J$ such that $A_{i,j} > 0$ for any $(i,j) \in R$, i.e., the rectangle does not touch any zero entry of $A$. Note that in any nonnegative factorization $A = \sum_{i=1}^r u_i v_i^T$, the rectangles $R_i = \supp(u_i) \times \supp(v_i)$ are necessarily monochromatic for $A$. The boolean rank of $A$ (also called the rectangle covering number), denoted $\rank_B(A)$ is the minimum number of monochromatic rectangles needed to cover the nonzero entries of $A$. From the previous observation it is easy to see that $\rank_B(A) \leq \rank_+(A)$.

As noted in \cite{fiorini2013combinatorial} the boolean rank of $A$ can be expressed as the chromatic number of a certain graph constructed from $A$. Define the \emph{rectangle graph} of $A$, denoted $\RG(A)$ as follows: The vertex set of $\RG(A)$ is the set of indices $(i,j)$ such that $A_{i,j} > 0$; furthermore there is an edge (undirected) between vertices $(i,j)$ and $(k,l)$ if, and only if, $A_{i,l} A_{k,j} = 0$. Figure \ref{fig:example_rectangle_graph} below shows an example of a rectangle graph for a $3\times 3$ nonnegative matrix.

\begin{figure}[ht]
  \centering
  \includegraphics[width=8cm]{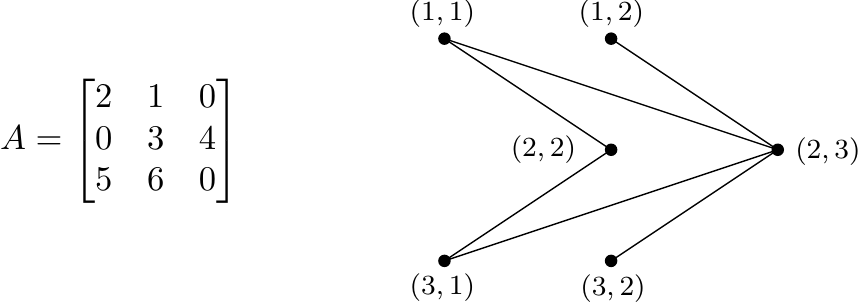}
  \caption{Rectangle graph $\RG(A)$ of a matrix $A$. Note that the graph does not depend on the specific values of $A_{i,j}$, only on the zero/nonzero pattern.}
  \label{fig:example_rectangle_graph}
\end{figure}

Note that if two entries $(i,j)$ and $(k,l)$ of $A$ are connected by an edge in $\RG(A)$, then the two entries cannot be covered by the same monochromatic rectangle.
 Using this observation, it is not hard to show that the minimum number of monochromatic rectangles needed to cover the nonzero entries $A$ is precisely the chromatic number of $\RG(A)$ \cite[Lemma 5.3]{fiorini2013combinatorial}:
\[ \rank_B(A) = \chi(\RG(A)). \]
An obvious lower bound on the chromatic number of $\RG(A)$ is the \emph{clique number} of $\RG(A)$, i.e., the size of the largest clique, which is denoted by $\omega(\RG(A))$. The clique number $\omega(\RG(A))$ is also sometimes known as the \emph{fooling set number} of $A$. Other famous lower bounds on $\chi(\RG(A))$ are the \emph{fractional chromatic number} $\chi_{\fra}(\RG(A))$ and the \emph{(complement) Lov\'{a}sz theta number} $\bartheta(\RG(A))$. These quantities satisfy the following inequalities:
\[ \fool(A) = \omega(\RG(A)) \leq \bartheta(\RG(A)) \leq \chi_{\fra}(\RG(A)) \leq \chi(\RG(A)) = \rank_B(A). \]

We will now see that the quantities $\tau_+(A)$ and $\tau_+^{\sos}(A)$ can be interpreted as non-combinatorial equivalents of $\chi_{\fra}(\RG(A))$ and $\bartheta(\RG(A))$ respectively. We start by recalling the definitions of the fractional chromatic number and the Lov\'{a}sz theta number.

\begin{itemize}
\item The fractional chromatic number of a graph $G$ is a linear programming relaxation of  the chromatic number (note however that the size of this LP relaxation may have exponential size and the fractional chromatic number is actually NP-hard \cite{lund1994hardness}). When applied to the rectangle graph of $A$, the quantity is called the \emph{fractional rectangle cover} of $A$ (see e.g., \cite{karchmer1992fractional}). Let $\cA_{B}(A)$ be the set of monochromatic rectangles valid for $A$ (the subscript ``B'' here stands for ``Boolean''):
\[ 
\cA_{B}(A) = \Bigl\{ R \in \{0,1\}^{m\times n} \; : \; \text{$R$ is a monochromatic rectangle for $A$} \Bigr\}.
\]
Using this notation, the fractional rectangle cover number of $A$ is the solution of the following linear program:
\begin{equation}
\label{eq:fracrc0}
\begingroup
\renewcommand*\arraystretch{1.5}
\chi_{\fra}(\RG(A)) = 
\begin{array}[t]{ll}
\text{min} & \displaystyle\sum_{R \in \cA_{B}(A)} x_R\\
\text{s.t.} & \forall R \in \cA_B(A) \; : \; x_R \geq 0\\
            & \forall (i,j), \;\; A_{i,j} > 0 \; \Rightarrow \; \displaystyle\sum_{R \in \cA_{B}(A)} x_R R_{i,j} \geq 1.
\end{array}
\endgroup
\end{equation}
Note that if we replace the constraint $x_R \geq 0$ with the binary constraint $x_R \in \{0,1\}$, we get the exact rectangle cover number of $A$. We can rewrite the linear program above in the following form, which emphasizes the connection with the quantity $\tau_+(A)$ (cf. Equation \eqref{eq:tau+min}):
\begin{equation}
\label{eq:fracrc}
 \chi_{\fra}(\RG(A)) = 
\begin{array}[t]{ll}
\text{min} & t\\
\text{s.t.} & \exists Y \in t \conv(\cA_B(A)) \; \text{ s.t. } \; \forall (i,j), \;\; A_{i,j} > 0 \; \Rightarrow \; Y_{i,j} \geq 1.
\end{array}
\end{equation}
The variable $Y$ above plays the role of $\sum_{R \in \cA_B(A)} x_R R$ in \eqref{eq:fracrc0}.

Note that a result of Lov\'{a}sz \cite{lovasz1975ratio} shows that for any graph $G=(V,E)$ the fractional chromatic number of $G$ is always within a $\ln |V|$ factor from $\chi(G)$, namely:
\[ \frac{1}{1+\ln|V|} \chi(G) \leq \chi_{\fra}(G) \leq \chi(G).   \]
\item Given a graph $G=(V,E)$, the complement Lov\'{a}sz theta number $\bartheta(G) \overset{\text{def}}{=} \vartheta(\bar{G})$ is defined by the following semidefinite program:
\[
\bartheta(G) = 
\begin{array}[t]{ll}
\text{min} & t\\
\text{subject to} & \begin{bmatrix} t & 1^T\\ 1 & X \end{bmatrix} \succeq 0\\
                  & X_{u,u} = 1 \quad \forall u \in V\\
                  & X_{u,v} = 0 \quad \forall \{u,v\} \in E
\end{array}
\]
When applied to the rectangle graph $\RG(A)$ of a nonnegative matrix $A$, we get:
\begin{equation}
\label{eq:bartheta-RGA}
\bartheta(\RG(A)) = 
\begin{array}[t]{ll}
\text{min} & t\\
\text{subject to} & \begin{bmatrix} t & 1^T\\ 1 & X \end{bmatrix} \succeq 0\\
& \forall (i,j) \text{ s.t. } A_{i,j} > 0: \; X_{ij,ij} = 1\\
& \forall (1,1)\leq (i,j) < (k,l) \leq (m,n):\\
& \qquad
\begin{cases}
 \text{if } A_{i,l} A_{k,j}=0\; : \; X_{ij,kl} = 0 \quad \text{(\ref{eq:bartheta-RGA}a)}\\
 \text{if } A_{i,j} A_{k,l}=0 \; : \; X_{il,kj} = 0 \quad \text{(\ref{eq:bartheta-RGA}b)}
\end{cases}
\end{array}
\end{equation}
Note how the semidefinite program above resembles the semidefinite program \eqref{eq:tau+sos_min-reduced} which defines $\tau_+^{\sos}(A)$. In Theorem \ref{thm:relation-tau-RGA} below, we show in fact that $\tau_+^{\sos}(A) \geq \bartheta(\RG(A))$.
\end{itemize}

\begin{theorem}
\label{thm:relation-tau-RGA}
If $A \in \RR^{m\times n}_+$ is a nonnegative matrix, then
\[ \tau_+(A) \geq \chi_{\fra}(\RG(A)) \quad \text{ and } \quad \tau_+^{\sos}(A) \geq \bartheta(\RG(A)). \]
\end{theorem}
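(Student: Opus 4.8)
The plan is to prove each of the two inequalities by exhibiting an explicit map from feasible solutions of the program defining the self-scaled quantity to feasible solutions, of no larger objective value, of the program defining the combinatorial quantity. In both cases the map is the obvious one: replace each rank-one atom by the $0/1$ indicator matrix of its support (which is a monochromatic rectangle of $A$), and rescale so that the constraints of the combinatorial program come out right.

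For $\tau_+(A) \geq \chi_{\fra}(\RG(A))$ I would work with the atomic-norm description $\tau_+(A) = \min\{t : A \in t\conv(\cA_+(A))\}$ from Lemma~\ref{lem:duality} together with the description \eqref{eq:fracrc} of $\chi_{\fra}(\RG(A))$. Fix an optimal representation $A = t\sum_k \lambda_k R_k$ with $t = \tau_+(A)$, $\lambda_k \geq 0$, $\sum_k \lambda_k = 1$, and $R_k \in \cA_+(A)$ (such a representation exists since $\cA_+(A)$ is compact). Each $R_k$ is rank-one and nonnegative, so $\supp(R_k)$ is a combinatorial rectangle, and since $0 \leq R_k \leq A$ this rectangle avoids the zeros of $A$, so its indicator $\bar R_k \in \{0,1\}^{m\times n}$ lies in $\cA_B(A)$. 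The one inequality that makes everything work is the pointwise bound $(\bar R_k)_{ij} \geq (R_k)_{ij}/A_{ij}$, valid for every $(i,j)$ with $A_{ij} > 0$, which is nothing but a restatement of $0 \leq R_k \leq A$. Now set $Y := t\sum_k \lambda_k \bar R_k \in t\conv(\cA_B(A))$; summing the pointwise bound against the weights $t\lambda_k$ gives, for every $(i,j)$ with $A_{ij} > 0$,
\[ Y_{ij} \;\geq\; \frac{1}{A_{ij}}\,\Big(t\sum_k \lambda_k R_k\Big)_{ij} \;=\; \frac{A_{ij}}{A_{ij}} \;=\; 1. \]
Hence $(t, Y)$ is feasible for \eqref{eq:fracrc}, and therefore $\chi_{\fra}(\RG(A)) \leq t = \tau_+(A)$.

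For $\tau_+^{\sos}(A) \geq \bartheta(\RG(A))$ I would compare the semidefinite programs \eqref{eq:tau+sos_min-reduced} and \eqref{eq:bartheta-RGA}, both of which have their matrix variable indexed by $\supp(A)$. Given a feasible point $(t, X)$ of \eqref{eq:tau+sos_min-reduced}, put $D := \diag(\pi(A))$ and $X' := D^{-1} X D^{-1}$. Conjugating the block matrix $\begin{bmatrix} t & \pi(A)^T \\ \pi(A) & X\end{bmatrix}$ by $\diag(1, D^{-1})$ and using $D^{-1}\pi(A) = \mathbf{1}$ shows $\begin{bmatrix} t & \mathbf{1}^T \\ \mathbf{1} & X'\end{bmatrix} \succeq 0$. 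The constraints $X_{ij,ij} \leq A_{ij}^2$ become $X'_{ij,ij} \leq 1$; to turn these into the equalities demanded by \eqref{eq:bartheta-RGA} I would add a nonnegative diagonal correction $\Lambda := \diag(\mathbf{1} - \diag(X'))$ and replace $X'$ by $X'' := X' + \Lambda$ --- this keeps the $2\times 2$ block matrix positive semidefinite and changes no off-diagonal entry. Finally, for every edge $\{u, v\}$ of $\RG(A)$ the corresponding ``broken'' $2\times 2$ minor falls into one of the two vanishing cases of \eqref{eq:tau+sos_min-reduced}, forcing $X_{u,v} = 0$, hence $X''_{u,v} = 0$ --- exactly the edge constraint of \eqref{eq:bartheta-RGA}. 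Thus $(t, X'')$ is feasible for \eqref{eq:bartheta-RGA}, so $\bartheta(\RG(A)) \leq t$; minimizing over feasible $(t, X)$ gives $\bartheta(\RG(A)) \leq \tau_+^{\sos}(A)$.

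The only genuinely fiddly point --- the step I expect to require care rather than cleverness --- is this last one: one must verify that each edge $\{u,v\} = \{(i,j),(k,l)\}$ of $\RG(A)$ (for which $A_u, A_v > 0$ while the product of the two complementary corner entries vanishes) is actually captured by a vanishing-case constraint of \eqref{eq:tau+sos_min-reduced} in the correct orientation, i.e.\ that the off-diagonal entry set to zero there is indeed $X_{u,v}$. This is a short case split according to whether $j < l$ or $j > l$ (noting that an edge forces $i \neq k$ and $j \neq l$), matching the roles of the ``given'' and ``derived'' corners against the partial order on indices used to enumerate the minors.
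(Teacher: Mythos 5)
Your argument is correct and follows essentially the same route as the paper: passing from an optimal atomic decomposition to its support indicators for the first inequality, and conjugating the moment matrix by $\diag(\pi(A))^{-1}$ to produce a feasible point of the $\bartheta$ SDP for the second. The one cosmetic deviation is that you repair the slack $X'_{ij,ij}\le 1$ by adding a nonnegative diagonal, whereas the paper instead notes that the diagonal inequalities in \eqref{eq:tau+sos_min-reduced} may be replaced by equalities without changing the optimum; both devices resolve the same small mismatch, and your explicit case split on edge orientations merely spells out the paper's remark that constraints (a$'$) and (b$'$) carry over.
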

\begin{proof}
\begin{enumerate}
\item 
We prove first that $\tau_+(A) \geq \chi_{\fra}(\RG(A))$. For convenience, we recall below the definitions of $\tau_+(A)$ and $\chi_{\fra}(\RG(A))$:
\[
\begingroup
\renewcommand*\arraystretch{1.5}
\begin{array}{c|c}
\tau_+(A) & \chi_{\fra}(\RG(A))\\ \hline
\begin{array}[t]{ll}
\min & t\\
\text{s.t.} & A \in t \conv(\cA_+(A))
\end{array}
&
\begin{array}[t]{ll}
\text{min} & t\\
\text{s.t.} & \exists Y \in t \conv(\cA_B(A))\\
                  & \quad \text{ s.t. } \forall (i,j), \;\; A_{i,j} > 0 \; \Rightarrow \; Y_{i,j} \geq 1
\end{array}
\end{array}
\endgroup
\]
Let $t = \tau_+(A)$ and $X \in \conv(\cA_+(A))$ such that $A = t X$. Consider the decomposition of $X$:
\[ X = \sum_{k=1}^r \lambda_k X_k, \]
where $X_k \in \cA_+(A)$, $\lambda_k \geq 0$ and $\sum_{k=1}^r \lambda_k = 1$. Let $R_k = \supp(X_k)$ (i.e., $R_k$ is obtained by replacing the nonzero entries of $X_k$ with ones) and observe that $R_k \in \cA_B(A)$. Define
\[ Y = t \sum_{k=1}^r \lambda_k R_k \in t \conv(\cA_B(A)) \]
Observe that for any $(i,j)$ such that $A_{i,j} > 0$ we have:
\[ Y_{i,j} = t \sum_{k:X_k[i,j]>0} \lambda_k \underbrace{R_k[i,j]}_{=1} \overset{(a)}{\geq} t \sum_{k:X_k[i,j]} \lambda_k \frac{X_k[i,j]}{A_{i,j}} \overset{(b)}{=} \frac{A_{i,j}}{A_{i,j}} = 1 \]
where in (a) we used the fact that $X_k \leq A$ (by definition of $X_k \in \cA_+(A)$) and in (b) we used the fact that $A = t\sum_{k} \lambda_k X_k$. Thus this shows that $(t,Y)$ is feasible for the optimization program defining $\chi_{\fra}(\RG(A))$ and thus we have $\chi_{\fra}(\RG(A)) \leq t = \tau_+(A)$.

\item We now show that $\tau_+^{\sos}(A) \geq \bartheta(\RG(A))$. For convenience, we recall the two SDPs \eqref{eq:tau+sos_min-reduced} and \eqref{eq:bartheta-RGA} that define $\tau_+^{\sos}(A)$ and $\bartheta(\RG(A))$ below (note the constraint $X_{ij,ij} = A_{ij}^2$ in the SDP on the left appears as an inequality constraint in \eqref{eq:tau+sos_min-reduced}---in fact it is not hard to see that with an equality constraint we get the same optimal value):
\[
\begingroup
\renewcommand*\arraystretch{1.5}
\begin{array}{c|c}
\tau_+^{\sos}(A) & \bartheta(\RG(A))\\ \hline
\begin{array}[t]{rl}
\text{min.} & t\\
\text{s.t.} & \begin{bmatrix} t & \pi(A)^T\\ \pi(A) & X \end{bmatrix} \succeq 0 \\
& \forall (i,j) \text{ s.t. } A_{i,j} > 0: \;\; X_{ij,ij} = A_{ij}^2\\
& \forall (1,1)\leq (i,j) < (k,l) \leq (m,n):\\
& \qquad
\begin{cases}
 \text{if } A_{i,l} A_{k,j}=0\; : \; X_{ij,kl} = 0 \quad \text{(a)}\\
 \text{if } A_{i,j} A_{k,l}=0 \; : \; X_{il,kj} = 0 \quad \text{(b)}\\
 \text{else } \;\; X_{ij,kl} - X_{il,kj} = 0 \quad \text{(c)}
\end{cases}
\end{array}
&
\begin{array}[t]{rl}
\text{min.} & t\\
\text{s.t.} 
& \begin{bmatrix} t & 1^T\\ 1 & X \end{bmatrix} \succeq 0 \\
& \forall (i,j) \text{ s.t. } A_{i,j} > 0: \;\; X_{ij,ij} = 1\\
& \forall (1,1)\leq (i,j) < (k,l) \leq (m,n):\\
& \qquad
\begin{cases}
 \text{if } A_{i,l} A_{k,j}=0\; : \; X_{ij,kl} = 0 \quad \text{(a')}\\
 \text{if } A_{i,j} A_{k,l}=0 \; : \; X_{il,kj} = 0 \quad \text{(b')}\\
\end{cases}
\end{array}

\end{array}
\endgroup
\]
 Observe that the two semidefinite programs are very similar except that $\tau_+^{\sos}(A)$ has more constraints than $\bartheta(\RG(A))$; cf. constraints (c) for $\tau_+^{\sos}(A)$. To show that $\tau_+^{\sos}(A) \geq \bartheta(\RG(A))$, let $(t,X)$ be the solution of the SDP on the left for $\tau_+^{\sos}(A)$. We will construct $X'$ such that $(t,X')$ is feasible for the SDP on the right and thus this will show that $\tau_+^{\sos}(A) \geq \bartheta(\RG(A))$.
Define $X'$ by:
\[ X' = \diag(\pi(A))^{-1} X \diag(\pi(A))^{-1}. \]
We show that $(t,X')$ is feasible for the SDP on the right: Note that:
\[ 
\begin{bmatrix} t & 1^T\\ 1 & X' \end{bmatrix} =
\begin{bmatrix} 1 & 0\\ 0 & \diag(\pi(A))^{-1} \end{bmatrix}
\begin{bmatrix} t & \pi(A)^T\\ \pi(A) & X \end{bmatrix}
\begin{bmatrix} 1 & 0\\ 0 & \diag(\pi(A))^{-1} \end{bmatrix} \succeq 0 \]
Second we clearly have $X'_{ij,ij} = A_{ij}^{-2} X_{ij,ij} = 1$. Finally constraints (a') and (b') are also clearly true. Thus this shows that $(t,X')$ is feasible for the SDP of $\bartheta(\RG(A))$ and thus $\bartheta(\RG(A)) \leq t = \tau_+^{\sos}(A)$.
\end{enumerate}
\if\mapr1\qed\fi
\end{proof}

Figure \ref{fig:summaryineqs} summarizes the different quantities discussed in this section and how they relate to the quantities $\tau_+(A)$ and $\tau_+^{\sos}(A)$:

\begin{figure}[ht]
\centering
\begin{equation*}
\begingroup
\normalsize
\renewcommand*{\arraystretch}{1.5}
\begin{array}{rcccccl}
                          &      & \tau_+^{\sos}(A) & \leq & \tau_+(A) & \leq & \rank_+(A)\\
                          &      &  \rotatebox[origin=c]{270}{{\Large  $\geq$}} &  &    \rotatebox[origin=c]{270}{{\Large  $\geq$}} &  &  \quad \rotatebox[origin=c]{270}{{\Large  $\geq$}} \\
\fool(A) = \omega(\RG(A)) & \leq & \bartheta(\RG(A)) & \leq & \chi_{\fra}(\RG(A)) & \leq & \chi(\RG(A)) = \rank_B(A)
\end{array}
\endgroup
\end{equation*}
\caption{Summary of the relations between $\tau_+(A), \tau_+^{\sos}(A)$ and some combinatorial lower bounds on $\rank_+(A)$.}
\label{fig:summaryineqs}
\end{figure}

\subsection{Comparison with norm-based lower bounds on nonnegative rank}
\label{sec:normbased}

In this section we consider a class of lower bounds on the nonnegative rank that are based on homogeneous functions and which are similar to the ones proposed in \cite{fawzi2012new} or implicitly in \cite{braun2012approximation} (see also Lemma 4 in \cite{rothvoss2013matching}). We then explore their connection with the quantity $\tau_+(A)$ and we show that such lower bounds are always dominated by $\tau_+(A)$.

\begin{definition}
A function $\pn:\RR^{m\times n}_+\rightarrow \RR_+$ is called \emph{positively homogeneous} if it satisfies $\pn(\lambda A) = \lambda \pn(A)$ for any $A \in \RR^{m\times n}_+$ and $\lambda \geq 0$. Furthermore, it is called \emph{monotone} if it satisfies, for any $A,B \in \RR^{m\times n}_+$:
\[ A \leq B \; \Rightarrow \; \pn(A) \leq \pn(B), \]
where $A\leq B$ is the componentwise inequality.
\end{definition}

Norms on $\RR^{m\times n}$ form a natural class of positively homogeneous functions. Many norms also satisfy the monotonicity property, like for example, the Frobenius norm (i.e., the $\ell_2$ entrywise norm):
\[ \|A\|_F = \sqrt{\sum_{i=1}^m \sum_{j=1}^n A_{i,j}^2}, \]
or the $\ell_{\infty}$ entrywise norm:
\[ \|A\|_{\infty} = \max_{\substack{1 \leq i \leq m\\ 1\leq j \leq n}} |A_{i,j}|. \]
Define $\cA_{\pn}$ to be the set of rank-one matrices in the ``unit ball'' of $\pn$, i.e.,
\begin{equation}
\label{eq:AN}
 \cA_{\pn} := \{ X \in \RR^{m\times n}_+ \; : \; \rank X \leq 1 \text{ and } \pn(X) \leq 1\}.
\end{equation}
We can also define:
\begin{equation}
 \label{eq:pnsdef}
 \begin{aligned}
   \pns(A) &= \min \{ t > 0 \; : \; A \in t \conv(\cA_{\pn}) \}\\
           &= \max \{ L(A) \; : \; L \text{ linear} \text{ and  $L(X) \leq 1 \; \forall X \in \cA_{\pn}$} \}.
 \end{aligned}
\end{equation}
The fact that the two formulations of $\pns(A)$ above are equal follows from convex duality and the same arguments used in Lemma \ref{lem:duality}.
The following proposition shows that one can obtain a lower bound on $\rank_+(A)$ using $\pns(A)$ and $\pn(A)$:
\begin{proposition}
\label{prop:phflbrank+}
Let $\pn:\RR^{m\times n}_+ \rightarrow \RR_+$ be a monotone positively homogeneous function, and let $\pns$ be defined as in Equation \eqref{eq:pnsdef}. Then for any $A \in \RR^{m\times n}_+$, we have:
\[ \rank_+(A) \geq \frac{\pns(A)}{\pn(A)}. \]
\end{proposition}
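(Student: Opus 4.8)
The plan is to mimic the proof that $\rank_+(A) \ge \tau_+(A)$, using the atomic-norm (gauge) formulation of $\pns$ from Equation~\eqref{eq:pnsdef} together with monotonicity of $\pn$. The key observation is that the set of atoms $\cA_{\pn}$ is ``large enough'' to contain every suitably rescaled rank-one term of a nonnegative factorization of $A$.

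First I would fix an optimal nonnegative factorization $A = \sum_{i=1}^r R_i$ with $r = \rank_+(A)$, each $R_i$ rank-one and nonnegative. Since the remaining terms $\sum_{j\neq i} R_j$ are nonnegative, we get $0 \le R_i \le A$ entrywise for every $i$. Monotonicity of $\pn$ then gives $\pn(R_i) \le \pn(A)$, and positive homogeneity gives $\pn\bigl(R_i/\pn(A)\bigr) \le 1$ (assuming $\pn(A) > 0$; the case $\pn(A) = 0$ is degenerate, and one checks directly that then $\pns(A) = 0$ as well, so the inequality holds trivially). As $R_i/\pn(A)$ is rank-one and nonnegative, this shows $R_i/\pn(A) \in \cA_{\pn}$ for all $i$.

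Next I would write
\[ \frac{A}{r\,\pn(A)} = \sum_{i=1}^r \frac{1}{r}\cdot\frac{R_i}{\pn(A)}, \]
which exhibits $A/(r\,\pn(A))$ as a convex combination of elements of $\cA_{\pn}$, hence a member of $\conv(\cA_{\pn})$. By the gauge formulation of $\pns$ in \eqref{eq:pnsdef} this yields $\pns(A) \le r\,\pn(A) = \rank_+(A)\,\pn(A)$, which is the claimed bound. Equivalently, one can run the dual argument: if $L$ is optimal in the max-formulation of $\pns(A)$, then $\pns(A) = L(A) = \sum_{i=1}^r L(R_i) = \pn(A)\sum_{i=1}^r L\bigl(R_i/\pn(A)\bigr) \le r\,\pn(A)$.

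There is essentially no serious obstacle here; the only points requiring care are the justification that $R_i \le A$ (which rests on nonnegativity of the other terms and is exactly where the hypothesis $A \in \RR^{m\times n}_+$ is used) and the handling of the degenerate case $\pn(A) = 0$.
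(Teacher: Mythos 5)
Your proof is correct and uses essentially the same approach as the paper: bound each rank-one term $R_i$ entrywise by $A$ using nonnegativity of the other terms, invoke monotonicity and homogeneity of $\pn$ to place a rescaled $R_i$ in $\cA_{\pn}$, and conclude from the gauge/dual formulation of $\pns$. The only (slight) difference is that you normalize every term uniformly by $\pn(A)$, whereas the paper's chain normalizes each $R_i$ by its own $\pn(R_i)$ and therefore implicitly needs to dispose of the degenerate case $\pn(R_i)=0$; your normalization sidesteps that cleanly, and you also handle $\pn(A)=0$ explicitly.
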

\begin{proof}
Let $A = \sum_{i=1}^r A_i$ be a decomposition of $A$ with $r=\rank_+(A)$ terms and where each $A_i$ is rank-one and nonnegative.
Let $L$ be the optimal solution in the maximization problem of Equation \eqref{eq:pnsdef}. Then we have:
\[ \pns(A) = L(A) = \sum_{i=1}^r L(A_i) = \sum_{i=1}^r \pn(A_i) L\left(\frac{1}{\pn(A_i)} A_i\right) \overset{(a)}{\leq} \sum_{i=1}^r \pn(A_i) \overset{(b)}{\leq} \sum_{i=1}^r \pn(A) = r \pn(A) \]
where in $(a)$ we used the homogeneity of $\pn$ and the fact that $L(X) \leq 1$ when $\pn(X) \leq 1$, and in $(b)$ we used the fact that for each $i$ we have $A_i \leq A$, and thus by monotonicity of $\pn$ we have $\pn(A_i) \leq \pn(A)$. Thus we finally get that
\[ r \geq \frac{\pns(A)}{\pn(A)} \]
which is what we wanted.
\if\mapr1\qed\fi
\end{proof}

In \cite{fawzi2012new} the authors studied the case where $\pn$ is the Frobenius norm, and where the associated quantity $\pns$ was called the \emph{nonnegative nuclear norm} and was denoted by $\nu_+$. For this particular choice of $\pn$ the following stronger lower bound was shown to hold:
\[ \rank_+(A) \geq \left(\frac{\pns(A)}{\pn(A)}\right)^2. \]
Also in \cite{rothvoss2013matching} the lower bound corresponding to $\pn = \|\cdot \|_{\infty}$ (entry-wise infinity norm) was used to obtain exponential lower bounds on the nonnegative rank of a certain matrix of interest in extended formulations of polytopes (the slack matrix associated with the matching polytope).

In the next theorem we show that any lower bound on $\rank_+$ obtained from monotone positively homogeneous functions like in Proposition \ref{prop:phflbrank+} is always dominated by $\tau_+(A)$.

\begin{theorem}
Let $\pn:\RR^{m\times n}_+\rightarrow \RR_+$ be a monotone positively homogeneous function, and let $\pns$ be as defined in Equation \eqref{eq:pnsdef}. Then for any $A \in \RR^{m \times n}_+$ we have:
\[ \rank_+(A) \geq \tau_+(A) \geq \frac{\pns(A)}{\pn(A)}. \]
\end{theorem}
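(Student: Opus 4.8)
The inequality $\rank_+(A) \ge \tau_+(A)$ is already available (it is the theorem ``$\rank_+(A)\ge\tau_+(A)$'' proved earlier, via the observation that each rank-one term of a nonnegative factorization of $A$ lies in $\cA_+(A)$), so the plan is devoted entirely to the second inequality $\tau_+(A) \ge \pns(A)/\pn(A)$. The guiding idea is that the \emph{self-scaled} set of atoms $\cA_+(A)$ sits inside an appropriately rescaled copy of the fixed-norm atom set $\cA_{\pn}$, after which the order-reversing behaviour of the Minkowski gauge does the rest.

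Concretely, I would first observe that every $R \in \cA_+(A)$ satisfies $0 \le R \le A$, so monotonicity of $\pn$ gives $\pn(R) \le \pn(A)$. Assuming $\pn(A) > 0$ (if $\pn(A) = 0$, then monotonicity forces $\pn$ to vanish on all atoms and the claimed bound is vacuous), the matrix $R/\pn(A)$ is rank-one, nonnegative, and has $\pn$-value at most $1$, i.e.\ $R/\pn(A) \in \cA_{\pn}$. Hence $\cA_+(A) \subseteq \pn(A)\,\cA_{\pn}$ and, taking convex hulls, $\conv(\cA_+(A)) \subseteq \pn(A)\,\conv(\cA_{\pn})$.

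Next I would translate this containment into the statement about gauges. By Lemma~\ref{lem:duality}, $\tau_+(A)$ is the Minkowski gauge of $\conv(\cA_+(A))$ evaluated at $A$, and by \eqref{eq:pnsdef}, $\pns(A)$ is the Minkowski gauge of $\conv(\cA_{\pn})$ evaluated at $A$. Using that the gauge reverses inclusions ($C_1 \subseteq C_2 \Rightarrow p_{C_1} \ge p_{C_2}$) and that $p_{\lambda C}(\cdot) = \lambda^{-1} p_C(\cdot)$ for $\lambda > 0$, the containment above yields
\[ \tau_+(A) = p_{\conv(\cA_+(A))}(A) \;\ge\; p_{\pn(A)\conv(\cA_{\pn})}(A) \;=\; \frac{1}{\pn(A)}\,p_{\conv(\cA_{\pn})}(A) \;=\; \frac{\pns(A)}{\pn(A)}, \]
which finishes the proof. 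Alternatively, one can argue on the dual side and avoid dividing by $\pn(A)$ in a set: take the optimal $L$ in the max-formulation of $\pns(A)$ in \eqref{eq:pnsdef}, rescale it to $L/\pn(A)$, and verify feasibility for the max-formulation \eqref{eq:tau+max} of $\tau_+(A)$ using $\pn(R)\le\pn(A)$ for $R\in\cA_+(A)$; then $\tau_+(A) \ge L(A)/\pn(A) = \pns(A)/\pn(A)$.

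There is no real obstacle here: the substance is the one-line set inclusion $\cA_+(A) \subseteq \pn(A)\,\cA_{\pn}$, immediate from monotonicity, combined with the elementary scaling and monotonicity properties of gauge functions already recorded in the Notations section. The only point that warrants an explicit sentence is the degenerate case $\pn(A) = 0$.
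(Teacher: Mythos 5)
Your proof is correct and rests on the same key observation as the paper's, namely the inclusion $\frac{1}{\pn(A)}\cA_+(A)\subseteq\cA_{\pn}$, which follows from monotonicity and homogeneity of $\pn$. Your primary presentation runs on the primal side (Minkowski gauges and the inclusion-reversal/scaling rules), whereas the paper runs the argument on the dual side by rescaling the optimal linear functional $L$ from \eqref{eq:pnsdef}; these are the same argument in two dual packagings, and indeed your ``alternatively'' paragraph reproduces the paper's exact proof. Your explicit remark about the degenerate case $\pn(A)=0$ is a small but welcome addition that the paper leaves implicit.
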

\begin{proof}
First note that we have the inclusion 
\begin{equation}
 \label{eq:inclusionA+An}
 \frac{1}{\pn(A)} \cA_{+}(A) \subseteq \cA_{\pn}.
\end{equation}
Indeed if $R$ is rank-one and satisfies $0\leq R \leq A$ then we have, by homogeneity and monotonicity of $\pn$, 
\[ \pn\left(\frac{1}{\pn(A)} R\right) =  \frac{1}{\pn(A)}\pn(R) \leq \frac{1}{\pn(A)} \pn(A) \leq 1. \]

Let $L$ be the optimal linear form in the definition of $\pns(A)$ in \eqref{eq:pnsdef}. Since $L \leq 1$ on $\cA_{\pn}$, by the inclusion \eqref{eq:inclusionA+An} we have that $L \leq 1$ on $\frac{1}{\pn(A)} \cA_+(A)$ or equivalently that $\frac{1}{\pn(A)} L \leq 1$ on $\cA_+(A)$. Thus by definition of $\tau_+(A)$ we have 
\[ \tau_+(A) \geq \frac{1}{\pn(A)} L(A) = \frac{\pns(A)}{\pn(A)}. \]
\if\mapr1\qed\fi
\end{proof}

We now show that the quantity $\tau_+(A)$ actually fits in the class of lower bounds of Proposition \ref{prop:phflbrank+}, where the homogeneous function $\pn$ \emph{depends on $A$}. Specifically if $A$ is a nonnegative matrix, we can define $\pn_A$ as follows:
\[ \pn_A(X) = \min \{ t > 0 \; : \; X \leq tA \}. \]
Clearly $\pn_A$ is a monotone positively homogeneous function, and it satisfies $\pn_A(A) = 1$. Note that the set of atoms $\cA_{\pn_A}$ associated to $\pn_A$ (cf. Equation \eqref{eq:AN}) is nothing but $\cA_+(A)$. Thus it follows directly from the definition \eqref{eq:pnsdef} of $\pns(A)$ that $\pns_A(A) = \tau_+(A)$.
To summarize we can write that:
\[ \tau_+(A) = \sup_{\substack{\pn \text{ monotone and} \\ \text{positively homogeneous}}} \frac{\pns(A)}{\pn(A)}. \]
\subsection{Examples}
In this section we apply the lower bounds $\tau_+$ and $\tau_+^{\sos}$ to some examples of matrices. We first derive an explicit formula for the lower bounds for $2\times 2$ matrices and then we consider a toy example drawing from the geometric interpretation of the nonnegative rank.

\subsubsection{$2\times 2$ matrices}

When $A$ is a $2\times 2$ nonnegative matrix, we can get a closed-form formula for the values of $\tau_+(A)$ and $\tau_+^{\sos}(A)$:
\begin{proposition}
For a $2\times 2$ nonnegative matrix
\[ A = \begin{bmatrix} x & y\\ z & w \end{bmatrix} \]
we have (when $xw+yz\neq 0$):
\begin{equation}
\label{eq:tau+-formula2x2}
 \tau_{+}(A) = \begin{cases}
2 - \displaystyle\frac{xw}{yz} & \text{ \upshape if } \;\; xw \leq yz\\[0.5cm]
2 - \displaystyle\frac{yz}{xw} & \text{ \upshape if } \;\; xw \geq yz
\end{cases}
\quad
\text{ and }
\quad
 \tau_{+}^{\sos}(A) = \begin{cases}
\displaystyle\frac{2}{1+\frac{xw}{yz}} & \text{ \upshape if } \;\; xw \leq yz\\[0.5cm]
\displaystyle\frac{2}{1+\frac{yz}{xw}} & \text{ \upshape if } \;\; xw \geq yz
\end{cases}
\end{equation}
If $xw=yz=0$ and $A$ has at least one positive entry then $\tau_+^{\sos}(A) = 1$.
\end{proposition}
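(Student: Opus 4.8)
The plan is to use the invariance properties of Theorem~\ref{thm:tau+properties} to put $A$ in normal form, and then evaluate each quantity by producing matching primal/dual objects. \textbf{Reduction.} Assume first that all four entries of $A$ are strictly positive. By Theorem~\ref{thm:tau+properties}(1) there are positive diagonal matrices $D_1,D_2$ with $D_1AD_2=\begin{bmatrix}1&1\\1&\rho\end{bmatrix}$, where $\rho=xw/yz$: choose the scalings to make the $(1,1),(1,2),(2,1)$ entries equal $1$, and the $(2,2)$ entry is then forced to be $xw/yz$. Swapping columns (Theorem~\ref{thm:tau+properties}(2)) replaces $\rho$ by $1/\rho$, and since the asserted formulas are symmetric under this exchange together with the case split $xw\le yz\leftrightarrow xw\ge yz$, we may assume $0<\rho\le 1$; it then suffices to prove $\tau_+(A)=2-\rho$ and $\tau_+^{\sos}(A)=\tfrac{2}{1+\rho}$ for $A=\begin{bmatrix}1&1\\1&\rho\end{bmatrix}$.

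\textbf{Computing $\tau_+$.} For the upper bound I would use the gauge formula of Lemma~\ref{lem:duality}: the identity
\[ A=\begin{bmatrix}1&1\\\rho&\rho\end{bmatrix}+(1-\rho)\begin{bmatrix}0&0\\1&0\end{bmatrix} \]
exhibits $A/(2-\rho)$ as a convex combination of two members of $\cA_+(A)$ (both summands are rank one and lie entrywise between $0$ and $A$), so $\tau_+(A)\le 2-\rho$. For the matching lower bound I would take the functional $L=\begin{bmatrix}-\rho&1\\1&0\end{bmatrix}$ (acting by $\langle L,\cdot\rangle$). Then $\langle L,A\rangle=2-\rho$, and for any rank-one $R=\begin{bmatrix}a&b\\c&d\end{bmatrix}$ with $0\le R\le A$ we have $ad=bc$, $a\ge 0$, $d\le\rho$, hence
\[ \langle L,R\rangle=-\rho a+b+c\le -ad+b+c=-bc+b+c=1-(1-b)(1-c)\le 1, \]
using $\rho a\ge da$ in the first step and $b,c\le 1$ in the last. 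Thus $L$ is feasible in \eqref{eq:tau+max} and $\tau_+(A)\ge 2-\rho$.

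\textbf{Computing $\tau_+^{\sos}$.} Here I would argue directly from the semidefinite program \eqref{eq:tau+sos_min}: for a $2\times 2$ matrix the variable $X$ is $4\times 4$ (index rows/columns by $\vec(\cdot)$, so $1\leftrightarrow(1,1)$, $2\leftrightarrow(2,1)$, $3\leftrightarrow(1,2)$, $4\leftrightarrow(2,2)$) and there is exactly one minor constraint, $X_{14}=X_{23}$. Set $v=\vec(A)=(1,1,1,\rho)^T$. Given a feasible $(t,X)$ with $t>0$, the Schur complement gives $N:=X-\tfrac1t vv^T\succeq 0$, the diagonal bounds give $N_{kk}\le v_k^2\,\tfrac{t-1}{t}$ (so $t\ge 1$), and the minor constraint reads $N_{14}-N_{23}=\tfrac{1-\rho}{t}$. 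Bounding the $2\times 2$ minors of $N$ by Cauchy--Schwarz,
\[ \tfrac{1-\rho}{t}=N_{14}-N_{23}\le\sqrt{N_{11}N_{44}}+\sqrt{N_{22}N_{33}}\le(1+\rho)\,\tfrac{t-1}{t}, \]
which forces $t\ge\tfrac{2}{1+\rho}$. For the reverse inequality I would display the optimal point: with $t=\tfrac{2}{1+\rho}$ (so $\tfrac1t=\tfrac{1+\rho}{2}$, $\tfrac{t-1}{t}=\tfrac{1-\rho}{2}$), $u_1=(1,0,0,\rho)^T$, $u_2=(0,1,-1,0)^T$, take $X=\tfrac1t vv^T+\tfrac{t-1}{t}(u_1u_1^T+u_2u_2^T)$; one checks $X_{kk}=v_k^2$, $X_{14}=X_{23}=\rho$, and $X-\tfrac1t vv^T\succeq 0$, so $(t,X)$ is feasible and $\tau_+^{\sos}(A)\le\tfrac{2}{1+\rho}$.

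\textbf{Degenerate cases and the main obstacle.} If $A$ has a zero entry but $xw+yz\ne 0$, then exactly one entry vanishes, and the same scaling/permutation reduction puts $A$ in the form $\begin{bmatrix}0&1\\1&1\end{bmatrix}$, i.e.\ the case $\rho=0$; the two arguments above apply verbatim ($\cA_+$ and the SDP simply lose a row/column), giving $\tau_+(A)=2$ and $\tau_+^{\sos}(A)=2$. If $xw=yz=0$ with $A\ne 0$, then $A$ has rank $\le 1$, so $\rank_+(A)=1$ and hence $\tau_+^{\sos}(A)\le\tau_+(A)\le 1$; conversely, for any $(i,j)$ with $A_{ij}>0$ the $2\times 2$ principal submatrix of $\begin{bmatrix}t&\vec(A)^T\\\vec(A)&X\end{bmatrix}$ on rows/columns $\{0,ij\}$ forces $tX_{ij,ij}\ge A_{ij}^2$ while $X_{ij,ij}\le A_{ij}^2$, giving $t\ge 1$, so $\tau_+^{\sos}(A)=1$. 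The one step needing real care is the $\tau_+^{\sos}$ lower bound — recognizing that only the diagonal bounds at the unit entries and the single minor equation are binding, and then controlling the off-diagonal entries of the PSD slack $N$ — while everything else is a short verification and the zero-entry cases are mostly bookkeeping.
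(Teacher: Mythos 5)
Your computation in the main (strictly positive) case is correct, and for the SDP bound you take a genuinely different route from the paper. For $\tau_+$ you use the same certificate $L$ as the paper, but your two-atom decomposition for the upper bound and the single chain $\langle L,R\rangle\le -ad+b+c=1-(1-b)(1-c)\le 1$ avoid the paper's case split on $d$; this is essentially the same argument, slightly streamlined. The real difference is in $\tau_+^{\sos}$: the paper certifies the lower bound by exhibiting an explicit dual feasible point (an SOS identity with multipliers $D_{ij}$, $\nu$), whereas you work entirely on the primal side, writing $N=X-\tfrac1t vv^T\succeq 0$, reading off $N_{14}-N_{23}=\tfrac{1-\rho}{t}$ and $N_{kk}\le v_k^2\tfrac{t-1}{t}$, and closing with Cauchy--Schwarz $N_{14}-N_{23}\le\sqrt{N_{11}N_{44}}+\sqrt{N_{22}N_{33}}$. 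This avoids having to guess and verify an SOS decomposition, and it makes the source of the number $\tfrac{2}{1+\rho}$ transparent. Your primal feasible point coincides with the paper's (your $u_1u_1^T+u_2u_2^T$ is exactly the paper's matrix $\tilde X$).

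There is, however, a genuine gap in the degenerate cases. You assert that if $A$ has a zero entry and $xw+yz\neq 0$ then exactly one entry vanishes and $A$ reduces by scaling/permutation to $\left[\begin{smallmatrix}0&1\\1&1\end{smallmatrix}\right]$. That is false: the diagonal pattern $y=z=0$, $xw>0$ and the antidiagonal pattern $x=w=0$, $yz>0$ both satisfy $xw+yz\neq 0$ with \emph{two} zero entries, and since diagonal scaling and row/column permutation preserve the support pattern up to permutation, neither can be brought to the one-zero normal form. In these two cases the reduced SDP loses two rows/columns (not one), and the constraint $X_{14}=X_{23}$ collapses differently (one side is forced to be $0$). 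The conclusion $\tau_+=\tau_+^{\sos}=2$ is still correct there, and an argument of the same flavor (Schur complement on the remaining $2\times 2$ block plus Cauchy--Schwarz) does give $t\ge 2$, but ``verbatim'' is inaccurate and your functional $L$ for the $\rho=0$ normal form does not directly apply. The paper sidesteps this bookkeeping by invoking Theorem~\ref{thm:relation-tau-RGA}: in all the $xw\cdot yz=0$, $xw+yz\neq 0$ patterns the rectangle graph has a clique of size $2$, so $\tau_+^{\sos}(A)\ge\omega(\RG(A))=2$, and this depends only on the support pattern, treating one-zero and two-zero cases uniformly.
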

\begin{proof}
Assume first $x,y,z,w > 0$. By the diagonal invariance property we have:
\[ \tau_{+}(A)
= \tau_{+} \left(\begin{bmatrix} 1 & 0 \\ 0 & z^{-1} x \end{bmatrix} \begin{bmatrix} x & y\\ z & w \end{bmatrix} \begin{bmatrix} x^{-1} & 0 \\ 0 & y^{-1} \end{bmatrix}\right)
= \tau_{+} \left(\begin{bmatrix} 1 & 1\\ 1 & \frac{xw}{yz} \end{bmatrix}\right), \]
and the same is true for $\tau_+^{\sos}$. To prove the formula we thus only need to study matrices of the form $\left[\begin{smallmatrix} 1 & 1\\ 1 & e\end{smallmatrix}\right]$.
The following lemma gives the value of $\tau_+$ and $\tau_+^{\sos}$ of such matrices as a function of $e$:
\begin{lemma}
\label{lem:tau+sos_2x2}
Let
\[ A(e) = \begin{bmatrix} 1 & 1\\ 1 & e \end{bmatrix}. \]
Then 
\begin{equation}
\label{eq:tau+e}
\tau_+(A(e)) = \begin{cases} 2-e & \text{ if } 0 \leq e \leq 1\\
2 - 1/e & \text{ if } e \geq 1
\end{cases}
\quad
\text{ and }
\quad
\tau_{+}^{\sos}(A(e)) = \begin{cases} \frac{2}{1+e} & \text{ if } 0 \leq e \leq 1\\
\frac{2}{1+1/e} & \text{ if } e \geq 1
\end{cases}.
\end{equation}
\end{lemma}
Figure \ref{fig:plot_tau_sos_2x2_matrices} below illustrates the result of Lemma \ref{lem:tau+sos_2x2} and shows the graphs of $\tau_+(A(e))$ and $\tau_+^{\sos}(A(e))$ as a function of $e$. Note that the functions $\tau_+$ and $\tau_+^{\sos}$ are in general not convex.
\begin{figure}[ht]
  \centering
  \includegraphics[width=9cm]{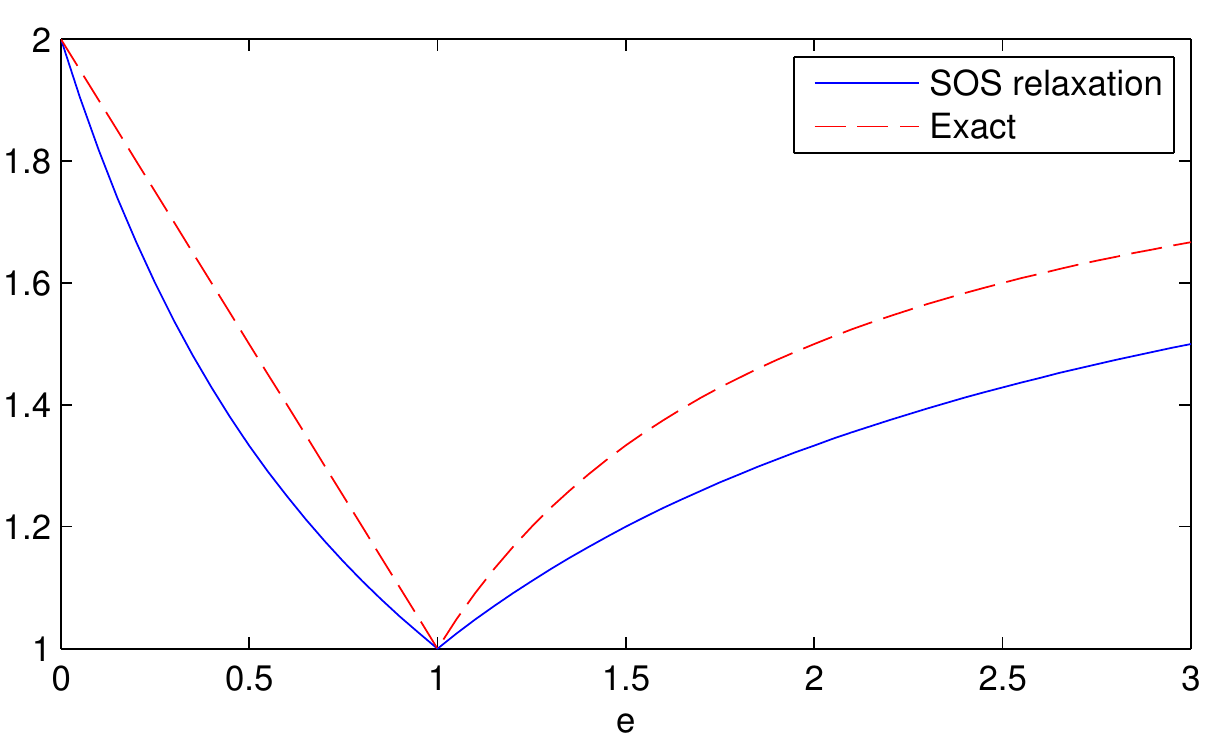}
  \caption{Graph of the functions $\tau_+(A(e))$ and $\tau_+^{\sos}(A(e))$ as a function of $e$ (cf. Equation \eqref{eq:tau+e} for the expressions).}
   \label{fig:plot_tau_sos_2x2_matrices}
\end{figure}
\begin{proof}[Proof of Lemma \ref{lem:tau+sos_2x2}]
First observe that it suffices to look at matrices $A(e)$ for $0 \leq e\leq 1$. Indeed for $e \geq 1$, the matrix $A(e)$ can be obtained from the matrix $A(1/e)$ by permuting the two columns then scaling the second row by $e$, namely we have:
\[ \begin{bmatrix} 1 & 1\\ 1 & e \end{bmatrix}
=
\begin{bmatrix} 1 & 0\\ 0 & e \end{bmatrix}
\begin{bmatrix} 1 & 1\\ 1 & 1/e \end{bmatrix} \begin{bmatrix} 0 & 1\\ 1 & 0 \end{bmatrix}.
\]
Thus by Theorem \ref{thm:tau+properties} on the properties of $\tau_+$ and $\tau_+^{\sos}$ we have:
\[ \tau_+(A(e)) = \tau_+(A(1/e)) \quad \text{ and } \quad \tau_+^{\sos}(A(e)) = \tau_+^{\sos}(A(1/e)). \]
In the following we thus fix $e \in [0,1]$ and $A=A(e)$ and we show that $\tau_+(A) = 2-e$ and $\tau_+^{\sos}(A) = 2/(1-e)$.

\noindent $\bullet$ \; We first show that $\tau_+(A) = 2-e$. To prove that $\tau_+(A) \geq 2-e$ we exhibit a linear function $L$ such that $L(R) \leq 1$ for all $R \in \cA_+(A)$. Define $L$ by:
\[ L\left ( \begin{bmatrix} a & b\\ c &d\end{bmatrix} \right) = b+c - e\cdot a. \]
The next lemma shows that $L(R) \leq 1$ for all $R \in \cA_+(A)$:
\begin{lemma}
For any $0\leq (a,b,c,d) \leq (1,1,1,e)$ such that $ad=bc$ we have 
\[ b+c-e\cdot a\leq 1. \]
\end{lemma}
\begin{proof}
If $d = 0$ then either $b=0$ or $c=0$ and the inequality is true because $b,c\leq 1$. Now if $d > 0$ we have:
\[
b+c - e\cdot a = b+c-e \frac{bc}{d} \overset{(*)}{\leq} b+c-bc = b\cdot 1 + (1-b) \cdot c \leq \max(1,c) \leq 1
\]
where in (*) we used the fact that $e/d\geq 1$.
\if\mapr1\qed\fi
\end{proof}
For this choice of $L$ we have $L(A) = 1 + 1 - e = 2-e$. Thus this shows that $\tau_+(A) \geq 2-e$.

We now show that $\tau_+(A) \leq 2-e$. For this we prove that $A \in (2-e) \conv(\cA_+(A))$. We have the following decomposition of $\frac{1}{2-e} A$:
\[ \frac{1}{2-e} \begin{bmatrix} 1 & 1\\ 1 & e\end{bmatrix} \; = \; \lambda \begin{bmatrix} 1 & 1\\ e & e\end{bmatrix} + \lambda \begin{bmatrix} 1 & e\\ 1 & e\end{bmatrix} + \mu \begin{bmatrix} 0 & 1\\ 0 & 0\end{bmatrix} + \mu\begin{bmatrix}0 & 0\\ 1 & 0\end{bmatrix} \]
where
\[ \lambda = \frac{1}{2(2-e)} \quad \text{ and } \quad \mu = \frac{1-e}{2(2-e)}. \]
Note that $\lambda,\mu \geq 0$, $2\lambda + 2\mu=1$ and that the four $2\times 2$ matrices in the decomposition belong to $\cA_+(A)$. Thus this shows that $A \in (2-e) \conv(\cA_+(A))$ and thus $\tau_+(A) \leq 2-e$.

\medskip

\noindent $\bullet$ \; We now look at the SDP relaxation $\tau_+^{\sos}$ and we show that $\tau_+^{\sos}(A(e)) = 2/(1+e)$. To do so, we exhibit primal and dual feasible points for the semidefinite programs \eqref{eq:tau+sos_min} and \eqref{eq:tau+sos_max} which attain the value $2/(1+e)$. These are shown in the table below:
\[
\begin{array}{c|c}
\text{\it Primal (SDP \eqref{eq:tau+sos_min})} & \text{\it Dual (SOS program \eqref{eq:tau+sos_max})}\\ \hline
\begin{array}{l}
t = \displaystyle\frac{2}{1+e}\phantom{\Biggl(\Biggr)}\\
X = \frac{1}{t} aa^T + \frac{1-e}{2} \underbrace{\begin{bmatrix}
1 & 0 & 0 & e\\
0 & 1 & -1 & 0\\
0 & -1 & 1 & 0\\
e & 0 & 0 & e^2\end{bmatrix}}_{\tilde{X}}\\
\;\;\text{ where } a = \vec(A)
\end{array}
&
\begin{array}{l}
L(X) = \frac{1+2e}{(1+e)^2}(X_{12}+X_{21}) - \frac{e}{(1+e)^2} X_{11} - \frac{1}{(1+e)^2} X_{22}\\
SOS(X) = \left(1-\frac{1}{1+e} X_{12}-\frac{1}{1+e}X_{21}\right)^2 + \left(\frac{\sqrt{e}}{1+e} X_{11} - \frac{1}{\sqrt{e}(1+e)}X_{22}\right)^2\\
D_{11} = \frac{e}{(1+e)^2} \qquad D_{12} = \frac{1}{(1+e)^2}\\
D_{21} = \frac{1}{(1+e)^2} \qquad D_{22} = \frac{1}{e(1+e)^2}\\
\nu = \frac{2}{(1+e)^2}
\end{array}
\end{array}
\]
It is not hard to show that these are feasible points: For the primal SDP we have to verify that the matrix
\[ \begin{bmatrix} t & a^T \\ a & X \end{bmatrix} \]
is positive semidefinite.
By Schur complement theorem this is equivalent to having $X \succeq aa^T / t$. This is true for the matrix $X$ defined above since by definition $X = aa^T / t + (1-e)/2 \cdot \tilde{X}$ where $\tilde{X} \succeq 0$. Also one can easily verify that for any $i,j$ we have $X_{ij,ij} = A_{ij}^2$. Finally the $2\times 2$ minor constraint is satisfied because we have:
\[ X_{(1,1),(2,2)} = X_{1,4} = \frac{1+e}{2} e + \frac{1-e}{2} e = e \]
and
\[ X_{(1,2),(2,1)} = X_{2,3} = \frac{1+e}{2} - \frac{1-e}{2} = e \]
and thus $X_{(1,1),(2,2)} = X_{(1,2),(2,1)}$.\\
To verify that the SOS certificate is valid we need to verify that the following identity holds:
\[ 1-L(X) = SOS(X) + \sum_{i,j} D_{ij} X_{ij} (A_{ij} - X_{ij}) + \nu (X_{11}X_{22} - X_{12}X_{21}), \]
which can be easily verified. Also the objective value is:
\[ L(A) = \frac{1+2e}{(1+e)^2}\cdot 2 - \frac{e}{(1+e)^2} \cdot 1 - \frac{1}{(1+e)^2}\cdot e = \frac{2}{1+e}. \]
\if\mapr1\qed\fi
\end{proof}
The lemma above together with the diagonal invariance property proves the formula \eqref{eq:tau+-formula2x2} when all entries $x,y,z,w$ are strictly positive. It remains to consider the case where some entries are equal to zero:
\begin{itemize}
\item If $xw = 0$ and $yz > 0$ we have to show that $\tau_+^{\sos}(A) = 2$.  To show this, observe that in this case $A$ has a fooling set of size 2 (i.e., the clique number of $\RG(A)$ is 2), and thus by Theorem \ref{thm:relation-tau-RGA} we have $\tau_+(A) \geq \tau_+^{\sos}(A) \geq 2$.\\
The case $xw > 0$ and $yz = 0$ is identical.
\item Otherwise, we have $xw=yz=0$. In this case the fooling set number is 1 and the nonnegative rank is 1 and thus $\tau_+^{\sos}(A) = 1$.
\end{itemize}
\if\mapr1\qed\fi
\end{proof}

\subsubsection{Nested rectangles problem}

We consider in this section an example dealing with the geometric interpretation of the nonnegative rank. The problem of finding a nonnegative factorization of a matrix $A$ is related to the problem of nested polytopes in geometry, where one looks for a polytope $T$ with minimal number of vertices that is sandwiched between two given polytopes $S$ and $P$, i.e., $S \subseteq T \subseteq P$. In this section we briefly review this geometric interpretation of the nonnegative rank and we then explore a simple example drawing from this interpretation. 

\paragraph{Geometric interpretation of the nonnegative rank} Consider a nonnegative matrix $A \in \RR^{m\times n}_+$ with columns $a_1,\dots,a_n \in \RR^m_+$ and assume for simplicity that for all $i$, $a_i \in \Delta^{m-1}$ where $\Delta^{m-1}$ is the unit simplex:
\[ \Delta^{m-1} = \{ x \in \RR^m \; : \; 1^T x=1, x \geq 0 \}. \]
Let $S$ be the polytope formed as the convex hull of the $a_i$'s:
\[ S = \conv(a_1,\dots,a_n). \]
Assume that we have a nonnegative factorization $A=UV$ of $A$. Using an appropriate diagonal scaling $U\leftarrow UD$ and $V\leftarrow D^{-1} V$, we can assume that the columns of $U$ and $V$ are in the unit simplex, i.e., $1^T U = 1^T$ and $1^T V = 1^T$. Geometrically, the factorization $A=UV$ simply says that each column of $A$ is equal to a convex combination of the columns of $U$, where the coefficients are given by the entries of $V$. Furthermore the columns of $U$ are all in the unit simplex. Thus if we let $T = \conv(u_1,\dots,u_k)$ we have the following inclusion:
\begin{equation}
 \label{eq:sandwiched-polytopes}
S \subseteq T \subseteq \Delta^{m-1}.
\end{equation}It is not too difficult to see that the nonnegative rank of $A$ is actually the smallest $k$ such that we can find a polytope $T$ with $k$ vertices that satisfies \eqref{eq:sandwiched-polytopes}.

When the matrix $A$ is $4\times 4$ and has rank 3, this sandwiched polytope problem can be reduced to a problem in the plane since one can show that it is sufficient to work in the two-dimensional affine subspace spanned by $a_1,\dots,a_4$. Note however that this is not true in general, i.e., we cannot in general reduce the problem to $\aff(A)$, and this in fact is the main difference between the nonnegative rank and the \emph{restricted nonnegative rank} defined in \cite{gillis2012geometric}. The following proposition summarizes the geometric interpretation of the nonnegative rank for $4\times 4$ matrices of rank 3.
\begin{proposition}
\label{prop:geomrank+}
Let $A$ be a $4 \times 4$ nonnegative matrix of rank 3 and assume that the columns $a_1,\dots,a_4$ of $A$ satisfy $1^T a_i = 1$ for all $i=1,\dots,4$. Let $\aff(A)$ be the affine hull of $a_1,\dots,a_4$ and note that $\aff(A)$ is a two-dimensional affine subspace of $\RR^4$. The nonnegative rank of $A$ is the smallest integer $k$ such that there exists a polytope $T \subseteq \aff(A)$ with $k$ vertices such that
\[ S \subseteq T \subseteq P \]
where $S$ and $P$ are two polytopes that live in $\aff(A)$ and that are defined by:
\[ S = \conv(a_1,\dots,a_4) \quad \text{ and } \quad P = \Delta^3 \cap \aff(A). \]
\end{proposition}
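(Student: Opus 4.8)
The statement asserts that for a $4\times4$ matrix of rank $3$ one may restrict the nested polytope to $\aff(A)$ without changing the optimal number of vertices. Write $k_0$ for the smallest number of vertices of a polytope $T\subseteq\aff(A)$ with $S\subseteq T\subseteq P$; the plan is to prove the two inequalities $\rank_+(A)\le k_0$ and $k_0\le\rank_+(A)$.

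\textbf{The inequality $\rank_+(A)\le k_0$.} This is immediate from the geometric description of $\rank_+$ recalled just before the proposition. If $T\subseteq\aff(A)$ realizes $k_0$, then $S\subseteq T\subseteq P=\Delta^3\cap\aff(A)\subseteq\Delta^3$, which is an instance of \eqref{eq:sandwiched-polytopes} with $m=4$; hence $\rank_+(A)$ is at most the number of vertices of $T$, i.e.\ $\rank_+(A)\le k_0$.

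\textbf{The inequality $k_0\le\rank_+(A)$.} Here I would first observe that, $A$ being $4\times4$, we have $3=\rank(A)\le\rank_+(A)\le4$, so $\rank_+(A)\in\{3,4\}$, and then split into two cases. If $\rank_+(A)=4$, take $T=P$: each column $a_i$ is nonnegative with $\1^Ta_i=1$ and lies in $\aff(A)$, so $a_i\in P$ and thus $S=\conv(a_1,\dots,a_4)\subseteq P$; moreover $\aff(A)$ lies in the hyperplane $\{\1^Tx=1\}$, so within the two-dimensional space $\aff(A)$ the set $P$ is cut out by the four halfspaces $\{x_i\ge0\}$ and is therefore a polygon with at most $4$ vertices. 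Hence $k_0\le4=\rank_+(A)$. If instead $\rank_+(A)=3$, the geometric description gives a polytope $T'$ with $3$ vertices such that $S\subseteq T'\subseteq\Delta^3$, and the crucial point is that such a $T'$ must already lie in $\aff(A)$: since $\rank(A)=3$, the affine hull $\aff(A)=\aff(S)$ is two-dimensional (as the statement notes), while $S\subseteq T'$ gives $\aff(S)\subseteq\aff(T')$; as $\aff(T')$ is the affine hull of $3$ points it has dimension at most $2$, so $\aff(T')=\aff(S)=\aff(A)$, whence $T'\subseteq\aff(A)\cap\Delta^3=P$. Thus $T'$ witnesses $k_0\le3=\rank_+(A)$, and combining the two bounds gives $k_0=\rank_+(A)$, as claimed.

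\textbf{Main obstacle.} I expect the dimension count in the rank-$3$ case to be the only real content of the argument: it is precisely where the hypotheses that $A$ is $4\times4$ and has rank $3$ are genuinely used, and it is the reason the statement has no analogue for larger matrices, where the optimal $T$ can genuinely leave $\aff(A)$ — exactly the phenomenon that separates $\rank_+$ from the restricted nonnegative rank of \cite{gillis2012geometric}.
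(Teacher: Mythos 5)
Your proof is correct, and it is worth noting that the paper itself states Proposition \ref{prop:geomrank+} without proof, deferring to \cite{gillis2012geometric} and \cite{mond2003stochastic}; your argument supplies exactly the verification the paper omits. The structure is right: the direction $\rank_+(A)\le k_0$ is immediate from the sandwich characterization \eqref{eq:sandwiched-polytopes}, and you correctly isolate the dimension count in the rank-$3$ case as the only real content --- $\aff(S)\subseteq\aff(T')$ with $\dim\aff(S)=2$ and $\dim\aff(T')\le 2$ forces $\aff(T')=\aff(A)$, hence $T'\subseteq \Delta^3\cap\aff(A)=P$ --- which is precisely where the ``$4\times4$ and rank $3$'' hypotheses enter and why the proposition has no analogue in higher dimensions. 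Two small points you could make explicit if writing this out in full: in the rank-$4$ branch, $P$ is a bounded polygon because $P\subseteq\Delta^3$ is compact, and it is two-dimensional (not degenerate) in $\aff(A)$ because $S\subseteq P$ and $\aff(S)=\aff(A)$; also, the bound $k_0\le 4$ via $T=P$ actually holds unconditionally (not just when $\rank_+(A)=4$), though your case split is still the right scaffolding since the sharper bound $k_0\le 3$ requires the separate affine-hull argument.
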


For more details on the geometric interpretation of the nonnegative rank we refer the reader to \cite{gillis2012geometric} and \cite{mond2003stochastic}.

\paragraph{Example} Let $P = [-1,1]^2$ be the square in the plane, and let 
\[ S(a,b) = [-a,a]\times [-b,b] \] be the rectangle of dimensions $2a\times 2b$ centered at $(0,0)$. We consider the following question: Does there exist a triangle $T$ contained in $P$ and that contains $S(a,b)$?

\begin{figure}[ht]
  \centering
  \includegraphics[width=9cm]{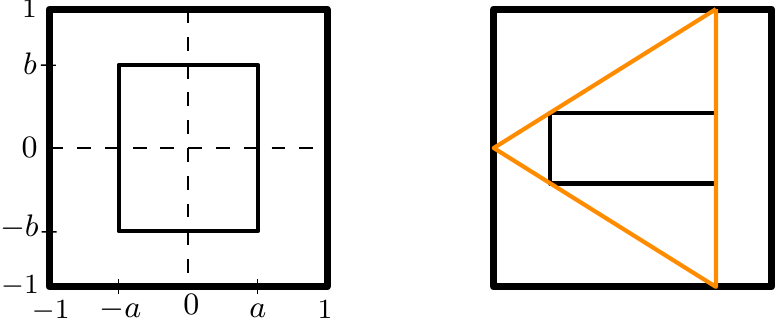}
  \caption{Left: The square $P=[-1,1]^2$ and the rectangle $S(a,b) = [-a,a]\times [-b,b]$. Right: An instance where there exists a triangle $T$ such that $S(a,b) \subset T \subset P$.}
  \label{fig:figure_nested_rectangles}
\end{figure}

 Using the geometric interpretation of the nonnegative rank, one can show that such a triangle exists if, and only if, the nonnegative rank of the following $4\times 4$ matrix is equal to 3:
\[
M(a,b) =
\begin{bmatrix}
1-a & 1+a & 1+a & 1-a\\
1-b & 1-b & 1+b & 1+b\\
1+a & 1-a & 1-a & 1+a\\
1+b & 1+b & 1-b & 1-b
\end{bmatrix}.
\]
In fact the matrix $M(a,b)$ is constructed in such a way that the polytopes $S$ and $P$ in Proposition \ref{prop:geomrank+} are respectively $S=S(a,b)$ and $P=[-1,1]^2$.

By computing the quantity $\tau_+^{\sos}(M(a,b))$ we can certify the \emph{non-existence} of such a triangle if we get $\tau_+^{\sos}(M(a,b)) > 3$. We have computed the value of $\tau_+^{\sos}(M(a,b))$ numerically for a grid of values in $(a,b) \in [0,1]^2$ and Figure \ref{fig:nested_squares_plot} shows the region where we got $\tau_+^{\sos}(M(a,b)) \geq 3$.
\begin{figure}[ht]
\centering
\includegraphics[width=6cm]{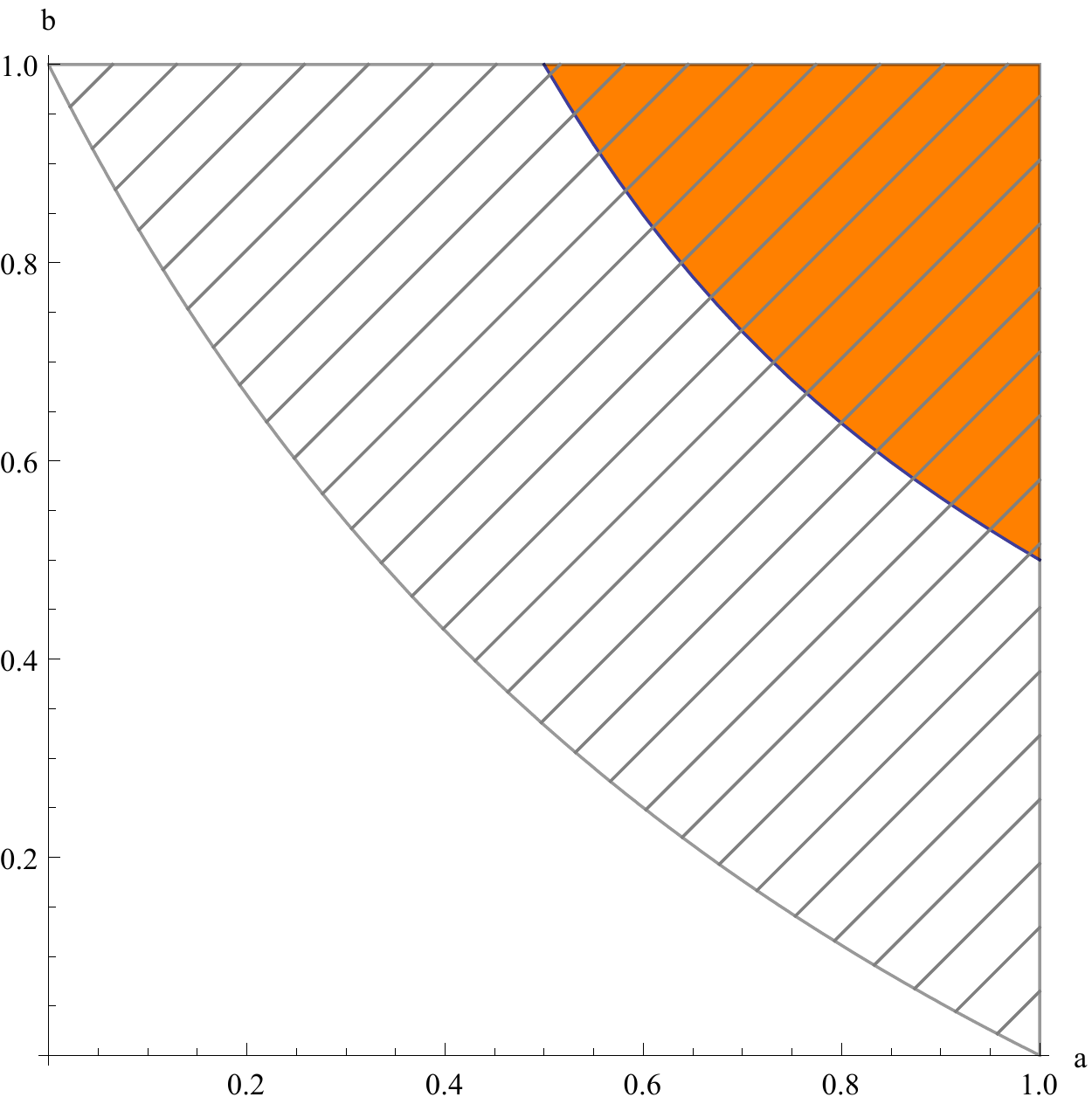}
\caption{The colored region is the region where $\tau_+^{\sos}(M(a,b)) \geq 3$, and in dashed is the correct region $\{(a,b) : \rank_+(M(a,b)) = 4\}$ (cf. proposition \ref{prop:existenceT}).}
\label{fig:nested_squares_plot}
\end{figure}

Using geometric considerations, one can actually solve the problem analytically and show that a triangle $T$ exists with $S(a,b) \subset T \subset P$ if, and only if, $(1+a)(1+b) \leq 2$:
\begin{proposition}
\label{prop:existenceT}
There exists a triangle $T$ such that $S(a,b) \subset T \subset P$ if, and only if, $(1+a)(1+b) \leq 2$.
\end{proposition}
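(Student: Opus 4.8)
The plan is to solve a pure planar geometry problem: the square $P=[-1,1]^2$ has vertices $(\pm1,\pm1)$, and $S(a,b)=[-a,a]\times[-b,b]$ has vertices $(\pm a,\pm b)$. We want to characterize when a triangle $T$ fits between them, $S(a,b)\subseteq T\subseteq P$. First I would record an easy reduction: since $T\subseteq P$, each of the three sides of $T$ lies in a closed half-plane of the form $\{\ell\le c\}$ with $\{\ell< c\}$ not meeting $P$'s interior only if... more usefully, each side of $T$ is a chord of $P$, i.e.\ a segment whose endpoints lie on $\partial P$; and in an optimal configuration we may assume each vertex of $S(a,b)$ touches $\partial T$ (otherwise shrink $T$). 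So $T$ is a triangle inscribed in the square (vertices on the boundary of $P$, sides possibly along edges of $P$) whose three sides each "cut off" one corner region of the square, and $S(a,b)$ must avoid all three cut-off regions, i.e.\ $S(a,b)$ lies on the inner side of each of the three lines supporting the sides of $T$.

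Next I would reduce to the extremal case. A side of $T$ is a line that separates $S(a,b)$ from one corner of $P$; the most "efficient" such line (cutting off the largest corner triangle while keeping $S(a,b)$ on the safe side) is the line through $S(a,b)$'s two vertices nearest that corner, i.e.\ a line supporting one edge of the rectangle, or more generally tangent to $S(a,b)$. So the question becomes: can three lines, each tangent to (or supporting an edge of) the rectangle $S(a,b)$ and each cutting off one corner of $P$, have the property that the three corners they cut off, together, cover all four corners of $P$? A triangle inside the square "uses up" corners: each side of $T$ can expel at most the corners lying strictly on its far side. With three sides we must expel all four vertices of $P$, so at least one side of $T$ must expel two adjacent vertices of $P$ — meaning one side of $T$ must be a chord separating an entire edge of $P$ from $S(a,b)$. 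By the square's symmetry (the dihedral group of order $8$, under which $S(a,b)\leftrightarrow S(b,a)$) we may assume this side separates the top edge $y=1$; the optimal such side is the horizontal line $y=t$ with $b\le t\le 1$ cutting off the slab $t\le y\le1$, but to be a single chord of the triangle contained in $P$ it is cleanest to take this side to lie along a line through two boundary points of $P$. I would then parametrize: $T$ has one side cutting off the top (a horizontal-ish chord above $S(a,b)$) and the other two sides cutting off the two bottom corners $(\pm1,-1)$, each tangent to $S(a,b)$ at the bottom, i.e.\ passing on the outer side of $(\mp a,-b)$ and $(\pm a,-b)$... running an optimization of the apex position and the two lower chords, the binding constraint reduces to an inequality among $a,b$.

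Carrying out that one-variable optimization is where the real computation lives, and I expect it to collapse to the clean condition $(1+a)(1+b)\le 2$: geometrically, the extremal triangle is the one with a horizontal top side and two sides each tangent to the rectangle and passing through a bottom corner of the square; writing down that these two tangent lines meet above the square (so the apex can be pulled down inside $P$) gives exactly $(1+a)(1+b)\le2$, and conversely when $(1+a)(1+b)>2$ every triangle in $P$ must leave some corner of $P$ uncovered in a way that intersects $S(a,b)$. The main obstacle is the case analysis in the converse direction — showing that \emph{no} triangle works when $(1+a)(1+b)>2$: one must argue that any triangle $T$ with $S(a,b)\subseteq T\subseteq P$ can be deformed, without breaking the containments, into the normal form above (one side cutting off a full edge, two sides tangent to $S(a,b)$ at adjacent vertices), and this normal-form reduction, though intuitively clear from convexity and the "shrink $T$ until its sides touch $S(a,b)$ and its vertices touch $P$" principle, requires care to make rigorous.

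Finally, I would remark that this also confirms Figure~\ref{fig:nested_squares_plot}: $\rank_+(M(a,b))=4$ exactly on $\{(1+a)(1+b)>2\}$, consistent with (and strictly larger than the region certified by) the bound $\tau_+^{\sos}(M(a,b))>3$.
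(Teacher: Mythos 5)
Your overall strategy mirrors the paper's sketch: reduce to a canonical extremal triangle, then compute when it contains $S(a,b)$. But the specific extremal triangle you propose is not the right one. You describe ``a horizontal top side and two sides each tangent to the rectangle and passing through a bottom corner of the square.'' Reading this literally (left side through $(-1,-1)$ tangent to $S(a,b)$ at $(-a,b)$, right side through $(1,-1)$ tangent at $(a,b)$) gives an isoceles triangle whose apex is $\bigl(0,\,-1+\tfrac{b+1}{1-a}\bigr)$; this apex lies inside $P$ precisely when $2a+b\leq1$, which is \emph{strictly stronger} than $(1+a)(1+b)\leq2$ (the gap is $\tfrac{1-a}{1+a}-(1-2a)=\tfrac{2a^2}{1+a}>0$). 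So for a nonempty range of $(a,b)$ the proposition says a triangle exists but your candidate does not fit, and the suggestion that ``the apex can be pulled down inside $P$'' has no well-defined meaning: truncating gives a quadrilateral, and tilting abandons your construction.

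The paper's canonical triangle is deliberately \emph{asymmetric}: vertices $(a,-1)$, $(a,1)$, $(-1,0)$, i.e.\ one full side is the vertical chord $x=a$ flush against the right face of $S(a,b)$, and the opposite vertex is the \emph{midpoint} of the left edge of $P$, not a corner. (The reflected version --- horizontal top side $y=b$ from $(-1,b)$ to $(1,b)$, apex at $(0,-1)$ --- is what your ``horizontal top side'' intuition should have produced; note the slanted sides through $(\pm1,b)$ and $(0,-1)$ do \emph{not} pass through $(\pm1,-1)$.) With the paper's triangle the binding constraint is that $(-a,-b)$ lies above the line through $(a,-1)$ and $(-1,0)$, giving $b\leq\tfrac{1-a}{1+a}$, i.e.\ $(1+a)(1+b)\leq2$ exactly. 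The reduction step you flag as delicate --- that any valid $T$ can be deformed to this normal form --- is indeed left at the same sketch level in the paper; the substantive gap in your proposal is the identification of the extremal triangle itself.
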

\begin{proof}[Sketch of proof]
\begin{itemize}
\item We first prove the direction $\Rightarrow$: Let $T$ be a triangle such that $S(a,b) \subset T \subset P$. We can clearly assume that the vertices of $T$ are all on the boundary of $P$.  Then since $T$ has only 3 vertices, there is one edge of $P$ that $T$ does not touch \footnote{We assume here that the vertices of $T$ do not coincide with any vertex of $P$---i.e., they all lie proper on the edges of $P$. Note that if one vertex of $T$ coincides with a vertex of $P$, then the other two vertices of $T$ can be easily determined from the inner rectangle and one can show that the triangle will indeed contain the inner rectangle if and only if $(1+a)(1+b) \leq 2$.}. Assume without loss of generality that the edge that $T$ misses is the edge joining $(1,-1)$ to $(1,1)$. In this case $T$ has the form shown in Figure \ref{fig:nested_squares_form1}(a) below.

It is easy to see that one can move the vertices of $T$ so that it has the ``canonical'' form shown in Figure \ref{fig:nested_squares_form1}(b) while still satisfying $S(a,b) \subset T \subset P$. The coordinates of the vertices of the canonical triangle are respectively $(a,-1), (a,1)$ and $(-1,0)$. Using a simple calculation one can show that this triangle contains $S(a,b)$ if and only if $(1+a)(1+b) \leq 2$.

\begin{figure}[ht]
  \centering
  \includegraphics[width=6cm]{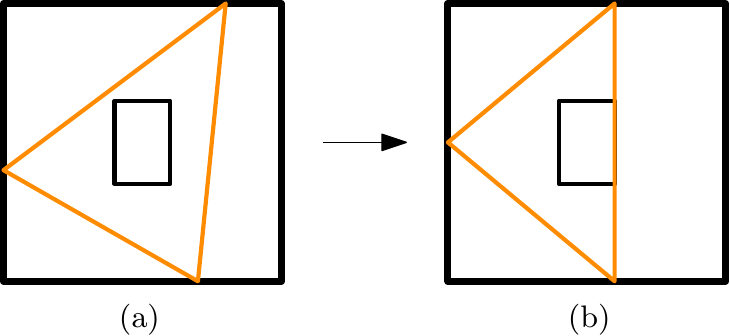}
  \caption{(a) A triangle $T$ such that $S(a,b) \subset T \subset P$. (b) Canonical form of triangle, where one side is parallel to the axis and touches the corresponding of $S(a,b)$.}
  \label{fig:nested_squares_form1}
\end{figure}

\item To show that the condition $(1+a)(1+b) \leq 2$ is sufficient, it suffices to consider the triangle of the form depicted in Figure \ref{fig:nested_squares_form1}(b) which can be shown to contain $S(a,b)$ if $(1+a)(1+b) \leq 2$.
\end{itemize}
\if\mapr1\qed\fi
\end{proof}

\section{Nonnegative rank of tensors}
\label{sec:rank+tensors}

\subsection{Definitions}

In this section we show how the same lower bound technique can be used to obtain lower bounds on nonnegative tensor rank.

Let $A = [a_{i_1\dots i_n}]$ be a nonnegative tensor of size $d_1\times \dots \times d_n$. A tensor of rank one is a tensor of the form:
\[ u_1\otimes \dots \otimes u_n \in \RR^{d_1\times \dots \times d_n} \]
where $u_1 \in \RR^{d_1}, \dots, u_n \in \RR^{d_n}$ and
\[ (u_1 \otimes \dots \otimes u_n)_{i_1,\dots,i_n} \overset{\text{def}}{=} (u_1)_{i_1} \dots (u_n)_{i_n}. \]
The nonnegative rank of $A$, denoted $\rank_+(A)$ is the smallest integer $r$ for which $A$ can be written as the sum of $r$ nonnegative rank-one tensors. Observe that in any rank-one nonnegative decomposition of $A$:
\[ A = \sum_{i=1}^r A_i \]
where $A_i$ are rank-one nonnegative tensors, each $A_i$ must satisfy $0 \leq A_i \leq A$ (componentwise inequality). Like in the matrix case, this motivates the definition of:
\[ \cA_+(A) = \Bigl\{ R \in \RR^{d_1\times \dots \times d_n} \; : \; \rank R \leq 1 \; \text{ and } \; 0\leq R \leq A \Bigr\}. \]
We can then define $\tau_+(A)$ also in the same way as for matrices:
\[
\begingroup
\renewcommand*{\arraystretch}{2.0}
\begin{array}{rl}
\tau_+(A) &= \;\;\min \quad\; t \;\quad\quad \text{ subject to } \quad A \in t \conv(\cA_+(A))\\
          &= \displaystyle\max_{L \text{ linear}} L(A) \quad \text{ subject to } \quad L(R) \leq 1 \; \forall R \in \cA_+(A).
\end{array}
\endgroup
\]
The quantity $\tau_+(A)$ then verifies:
\[ \tau_+(A) \leq \rank_+(A). \]

\subsection{Semidefinite programming relaxation}

To obtain a semidefinite programming relaxation of $\tau_+(A)$ for tensors, we use the same procedure as in the case of matrices, described in section \ref{sec:tau+sdprelaxation}. We construct an over-relaxation of $\conv(\cA_+(A))$ which can be represented using linear matrix inequalities. The variety of rank-one tensors $R \in \RR^{d_1\times \dots \times d_n}$ is known as the \emph{$n$-factor Segre variety} and can be described using quadratic equations in the entries of $R$; those equations are described in \cite{grone1977decomposable} and can be summarized as follows: If $\mb{i}=(i_1,\dots,i_n) \in [d_1]\times \dots \times [d_n]$ and $\mb{j}=(j_1,\dots,j_n) \in [d_1]\times \dots \times [d_n]$, and $k \in \{1,\dots,n\}$, define $\swapix{\mb{i}}{\mb{j}}{k}$ the multi-index obtained from $\mb{i}$ by replacing the $k$'th position by $j_k$, i.e.,
\[ \swapix{\mb{i}}{\mb{j}}{k} = (i_1,\dots,j_k,\dots,i_n). \]
The following gives the characterization of rank-one tensors using quadratic equations \cite{grone1977decomposable}:
\begin{equation}
\label{eq:tensor-rank1-equations}
\begingroup
\renewcommand*{\arraystretch}{1.3}
\begin{array}{c}
\rank R \leq 1 \\
\Longleftrightarrow\\
\forall \mb{i}, \mb{j} \in [d_1]\times \dots \times [d_n], \;\; \forall k=1\dots,n: \;\; R[\mb{i}]\cdot R[\mb{j}] = R[\swapix{\mb{i}}{\mb{j}}{k}]\cdot R[\swapix{\mb{j}}{\mb{i}}{k}].
\end{array}
\endgroup
\end{equation}
If we introduce $r = \vec(R)$, then the equations above are linear in the entries of the matrix $X=rr^T$:
\begin{equation}
\label{eq:rankone-equations-tensor-linearized}
\forall \mb{i}, \mb{j} \in [d_1]\times \dots \times [d_n], \;\; \forall k=1\dots,n: \;\; X[\mb{i},\mb{j}] = X[\swapix{\mb{i}}{\mb{j}}{k},\swapix{\mb{j}}{\mb{i}}{k}]
\end{equation}
 Furthermore, the constraint $R \leq A$ implies that:
\[ (rr^T)_{\mb{i},\mb{i}} \leq r_{\mb{i}} A_{\mb{i}}, \]
for any $\mb{i}=(i_1,\dots,i_n) \in [d_1]\times \dots \times [d_n]$, 
and these are linear inequalities in the entries of the matrix
\[
\begin{bmatrix} 1\\ r \end{bmatrix} \begin{bmatrix} 1 \\ r \end{bmatrix}^T = \begin{bmatrix} 1 & r^T \\ r & rr^T \end{bmatrix}.
\]
Using these observations, we obtain the following over-relaxation of $\conv(\cA_+(A))$:
\begin{equation}
 \label{eq:overrelaxationA-tensors}
 \conv(\cA_+(A)) \subseteq \cA_+^{\sos}(A)
\end{equation}
where
\begin{equation}
\label{eq:convAsos-tensors}
\begin{aligned}
\cA_+^{\sos}(A) = \Biggl\{ R \in \RR^{d_1\times \dots \times d_n} \; : \; & \exists X \in \S^{d_1\dots d_n} \; \text{ such that } \; \begin{bmatrix}
1 & \vec(R)^T\\
\vec(R) & X
\end{bmatrix} \succeq 0\\
& \qquad \text{ and } X_{\mb{i},\mb{i}} \leq R_{\mb{i}} A_{\mb{i}} \quad \forall \mb{i}=(i_1,\dots,i_n) \in [d_1]\times \dots \times [d_n]\\
& \qquad \text{ and $X$ satisfies rank-one equations \eqref{eq:rankone-equations-tensor-linearized}} \Biggr\}.
\end{aligned}
\end{equation}
 The semidefinite programming relaxation of $\tau_+(A)$ can thus be defined as follows:
\[ \tau_+^{\sos}(A) = \min \{ t > 0 \; : \; A \in t\cA^{\sos}_+(A)\}, \]
and it satisfies:
\[ \tau_+^{\sos}(A) \leq \tau_+(A) \leq \rank_+(A). \]
More explicitly, $\tau_+^{\sos}(A)$ is the solution of the following semidefinite program:
\begin{equation}
\begingroup
\renewcommand*{\arraystretch}{1.3}
\label{eq:tau+sos_min-tensors}
\begin{array}{rrl}
\tau_+^{\sos}(A) &= 
\;\;\underset{t,X}{\min} & t\\
&\text{s.t.} &
\begin{bmatrix} t & \vec(A)^T\\ \vec(A) & X \end{bmatrix} \succeq 0\\
&& X_{\mb{i},\mb{i}} \leq A_{\mb{i}}^2 \quad \forall \mb{i} \in [d_1]\times \dots \times [d_n]\\
&& \text{$X$ satisfies rank-one equations \eqref{eq:rankone-equations-tensor-linearized}}
\end{array}
\endgroup
\end{equation}
Like for the nonnegative rank of matrices (cf. Section \ref{sec:rank+sdp}), the dual of the semidefinite program \eqref{eq:tau+sos_min-tensors} can be written as the following sum-of-squares program:
\begin{equation}
\label{eq:tau+sos_max-tensors}
\tau_+^{\sos}(A) = 
\begin{array}[t]{ll}
\text{max} & L(A)\\
\text{s.t.} & L \text{ is a linear form}\\
& 1 - L(X) = SOS(X) + \displaystyle\sum_{i_1,\dots,i_n} D_{i_1 \dots i_n} X_{i_1 \dots i_n} (A_{i_1 \dots i_n} - X_{i_1 \dots i_n})\;\; \text{ mod } I\\
& D_{i_1 \dots i_n} \geq 0 \quad \forall (i_1,\dots,i_n) \in [d_1]\times \dots \times [d_n]\\
& SOS(X) \text{ is a sum-of-squares polynomial}
\end{array}
\end{equation}
Here $I$ is the ideal of rank-one tensors described by the quadratic equations in \eqref{eq:tensor-rank1-equations}.

\subsection{Properties}

The quantities $\tau_+$ and $\tau_+^{\sos}$ for tensors satisfy the same properties as for matrices. We mention here the invariance under scaling property (we omit the proof since it is very similar as for the matrix case):
\begin{theorem}
Let $A$ be a nonnegative tensor of size $d_1\times \dots \times d_n$. Let $D^1\in \RR^{d_1}_+, D^2 \in \RR^{d_2}_+, \dots , D^n \in \RR^{d_n}_+$ be nonnegative vectors with strictly positive entries and let $A'$ be the tensor defined by:
\[ A'_{i_1,\dots,i_n} = D^1_{i_1} \dots D^n_{i_n} A_{i_1,\dots,i_n}, \]
for all $(i_1,\dots,i_n) \in [d_1]\times \dots \times [d_n]$. Then
\[ \tau_+(A') = \tau_+(A) \quad \text{ and } \quad \tau_+^{\sos}(A') = \tau_+^{\sos}(A). \]
\end{theorem}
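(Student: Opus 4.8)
The plan is to transcribe the proof of invariance under diagonal scaling from the matrix case (Theorem~\ref{thm:tau+properties}, part~1), since the multilinear scaling map on tensors has exactly the two features that drove the matrix argument: it preserves rank-one structure and it preserves the componentwise order. Introduce the linear bijection $\Phi:\RR^{d_1\times\dots\times d_n}\to\RR^{d_1\times\dots\times d_n}$ given by $\Phi(R)_{i_1,\dots,i_n}=D^1_{i_1}\cdots D^n_{i_n}\,R_{i_1,\dots,i_n}$, so that $A'=\Phi(A)$. First I would check that $\Phi$ maps rank-one tensors to rank-one tensors: if $R=u_1\otimes\dots\otimes u_n$ then $\Phi(R)=(D^1\hadprod u_1)\otimes\dots\otimes(D^n\hadprod u_n)$, where $\hadprod$ is the entrywise product. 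Second, since every entry of every $D^k$ is strictly positive, $0\leq R\leq A$ holds if and only if $0\leq\Phi(R)\leq\Phi(A)=A'$. Together these give $\cA_+(A')=\Phi(\cA_+(A))$.

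For the statement about $\tau_+$, since $\Phi$ is linear it commutes with taking convex hulls, so $\conv(\cA_+(A'))=\Phi(\conv(\cA_+(A)))$; and since $\Phi$ is a bijection, $A'\in t\,\conv(\cA_+(A'))$ if and only if $A\in t\,\conv(\cA_+(A))$. Taking the infimum over $t$ yields $\tau_+(A')=\tau_+(A)$, exactly as in the chain of equalities~\eqref{eq:chain-equalities-invariance-diagona-scaling}.

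For the statement about $\tau_+^{\sos}$, I would work with the semidefinite program~\eqref{eq:tau+sos_min-tensors}. Write $c_{\mb{i}}=D^1_{i_1}\cdots D^n_{i_n}$ for a multi-index $\mb{i}=(i_1,\dots,i_n)$, and given a feasible $(t,X)$ for the program defining $\tau_+^{\sos}(A)$ set $X'_{\mb{i},\mb{j}}=c_{\mb{i}}c_{\mb{j}}X_{\mb{i},\mb{j}}$. Conjugating $\begin{bmatrix} t & \vec(A)^T\\ \vec(A) & X\end{bmatrix}$ by the positive definite matrix $\diag(1,(c_{\mb{i}})_{\mb{i}})$ preserves positive semidefiniteness and produces $\begin{bmatrix} t & \vec(A')^T\\ \vec(A') & X'\end{bmatrix}$, because $c_{\mb{i}}A_{\mb{i}}=A'_{\mb{i}}$. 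The diagonal constraints scale correctly: $X'_{\mb{i},\mb{i}}=c_{\mb{i}}^2X_{\mb{i},\mb{i}}\leq c_{\mb{i}}^2A_{\mb{i}}^2=(A'_{\mb{i}})^2$. The point that requires a short check is that $X'$ still satisfies the Segre (rank-one) equations~\eqref{eq:rankone-equations-tensor-linearized}: the identity $c_{\mb{i}}c_{\mb{j}}=c_{\swapix{\mb{i}}{\mb{j}}{k}}\,c_{\swapix{\mb{j}}{\mb{i}}{k}}$ holds for every $\mb{i},\mb{j}$ and every $k$ --- both sides equal $\prod_{\ell=1}^{n}D^\ell_{i_\ell}D^\ell_{j_\ell}$, the swap merely reassigning the two factors $D^k_{i_k},D^k_{j_k}$ --- so multiplying $X[\mb{i},\mb{j}]=X[\swapix{\mb{i}}{\mb{j}}{k},\swapix{\mb{j}}{\mb{i}}{k}]$ by $c_{\mb{i}}c_{\mb{j}}$ gives the same equation for $X'$. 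Hence $(t,X')$ is feasible for the program defining $\tau_+^{\sos}(A')$, so $\tau_+^{\sos}(A')\leq\tau_+^{\sos}(A)$; applying the same argument with each $D^k$ replaced by its entrywise reciprocal gives the reverse inequality, and therefore $\tau_+^{\sos}(A')=\tau_+^{\sos}(A)$.

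The computations here are all routine; the one mildly delicate point --- the part I would single out as the main obstacle --- is the bookkeeping of scaling factors in the Segre equations~\eqref{eq:rankone-equations-tensor-linearized}, i.e.\ confirming that the swap operation $\swapix{\cdot}{\cdot}{k}$ redistributes $D^k_{i_k},D^k_{j_k}$ without changing the overall product $c_{\mb{i}}c_{\mb{j}}$. An alternative that avoids the SDP altogether is to argue on the dual sum-of-squares formulation~\eqref{eq:tau+sos_max-tensors} as in the $\tau_+^{\sos}$ half of the proof of Theorem~\ref{thm:tau+properties}: send an optimal linear form $L$ for $A$ to $L'=L\circ\Phi^{-1}$ and verify it stays feasible for $A'$, with the multipliers $D_{i_1\dots i_n}$ rescaled by $c_{\mb{i}}^{-2}$ and the $SOS$ term composed with $\Phi^{-1}$.
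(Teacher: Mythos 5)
Your proposal is correct, and it carries out exactly what the paper intends when it says the proof is ``omitted since it is very similar as for the matrix case.'' The $\tau_+$ half transcribes the matrix argument faithfully: $\Phi$ is a linear bijection preserving the rank-one variety and the componentwise order, hence $\cA_+(A')=\Phi(\cA_+(A))$, and the gauge-function computation goes through verbatim.

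For the $\tau_+^{\sos}$ half you deviate slightly from the matrix proof in a way worth noting. The paper's proof of Theorem~\ref{thm:tau+properties}(1) for $\tau_+^{\sos}$ works with the \emph{dual} sum-of-squares formulation~\eqref{eq:tau+sos_max}, transforming the optimal linear form $L$ into $L'=L\circ\Phi^{-1}$ and rescaling the multipliers $D_{ij}$ and the $SOS$ term. Your main argument instead works directly with the \emph{primal} SDP~\eqref{eq:tau+sos_min-tensors}, conjugating the moment matrix by $\diag\bigl(1,(c_{\mb{i}})_{\mb{i}}\bigr)$. The primal route is arguably cleaner here: it avoids the appeal to strong duality, and the key computation is the single multiplicative identity $c_{\mb{i}}\,c_{\mb{j}}=c_{\swapix{\mb{i}}{\mb{j}}{k}}\,c_{\swapix{\mb{j}}{\mb{i}}{k}}$, which you correctly verify --- both products equal $\prod_{\ell}D^{\ell}_{i_\ell}D^{\ell}_{j_\ell}$ since the swap merely exchanges which of the two factors $D^{k}_{i_k},D^{k}_{j_k}$ sits in each multi-index. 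You also correctly note the dual SOS alternative at the end, so you have in fact covered the paper's approach as well. Both routes reach the same conclusion; the primal one you chose is somewhat more self-contained, while the dual one parallels the matrix proof more literally.
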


\subsection{Example}

Let $A$ be the $2\times 2\times 2$ tensor defined by:
\begin{equation}
\label{eq:tensorp}
 A =
\left[
\begin{array}{rr|rr}
x & 1 & w & 1\\
1 & w & 1 & x
\end{array}\right],
\end{equation}
where $x,w \geq 0$ (in the notation above, the first $2\times 2$ block is the slice $A(\cdot,\cdot,1)$ and the second $2\times 2$ block is the slice $A(\cdot,\cdot,2)$). Such $2\times 2\times 2$ tensors were studied in \cite[Example 2.3]{allman2013tensors}.
In the paper \cite{allman2013tensors}, necessary and sufficient conditions are given for a nonnegative tensor to have nonnegative rank $\leq 2$. For the $2\times 2\times 2$ tensor of Equation \eqref{eq:tensorp}, we get that:
\[ \rank_+ A \leq 2 \; \Longleftrightarrow \; xw \geq 1 \text{ or } x=w. \]
Figure \ref{fig:region2x2x2tensor} below shows the region where $\rank_+(A) \leq 2$ in green. We have computed the value of $\tau_+^{\sos}(A)$ numerically for a grid of values $(x,w) \in [0,3]\times [0,3]$ and we show on Figure \ref{fig:region2x2x2tensor} the region where $\tau_+^{\sos}(A) > 2$. The region in white in the figure correspond to tensors $A$ where $\tau_+^{\sos}(A) \leq 2 < \rank_+(A)$.
\begin{figure}[ht]
\centering
\includegraphics[width=10cm]{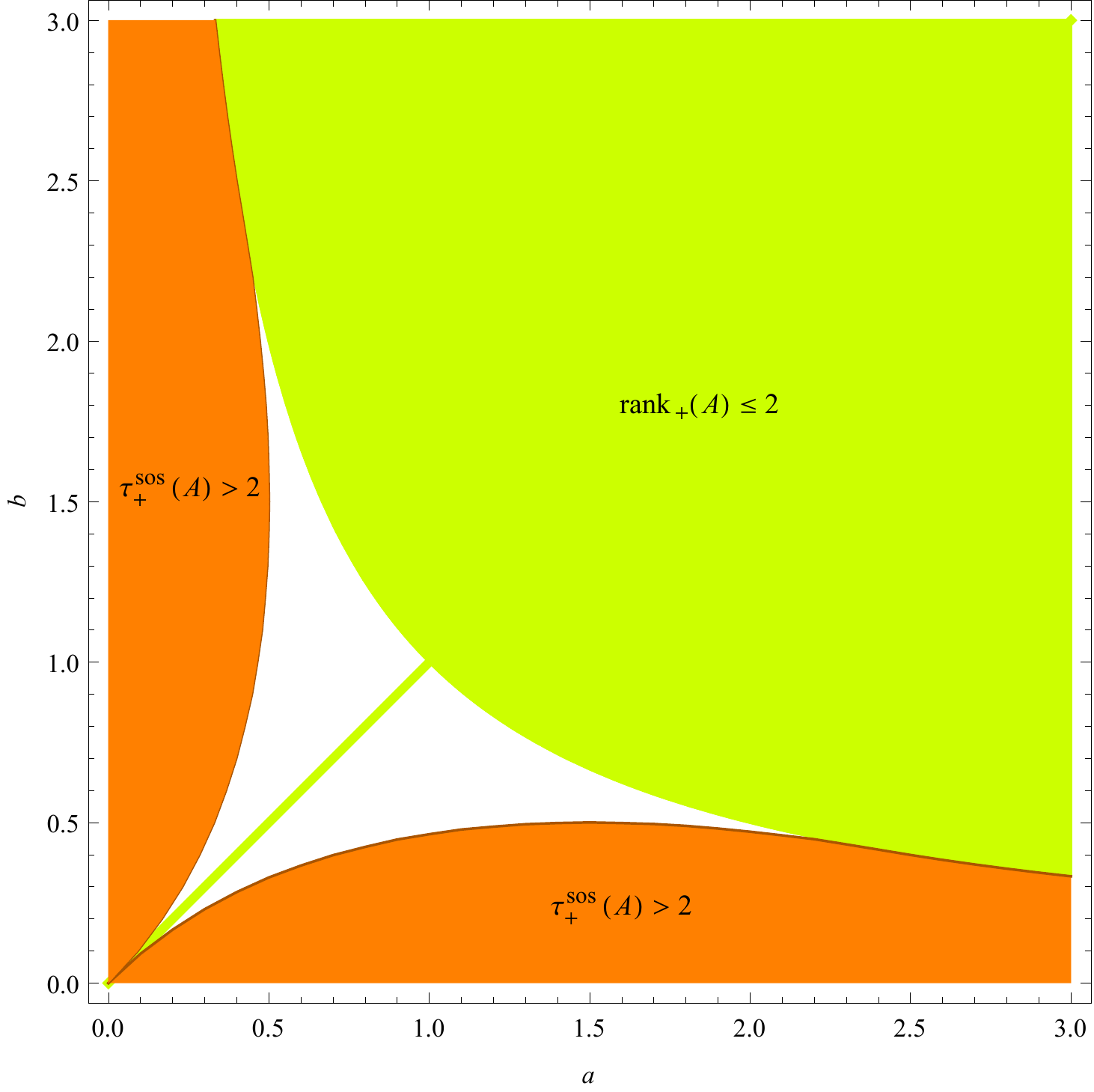}
\caption{Regions $(x,w)$ where $\tau_+^{\sos}(A) > 2$ and where $\rank_+(A) \leq 2$.}
\label{fig:region2x2x2tensor}
\end{figure}

\section{The cp-rank of completely positive matrices}
\label{sec:cprank}

In this section we apply the lower bounding technique to the cp-rank for completely positive matrices. For a reference on completely positive matrices we refer the reader to \cite{berman2003completely,dickinson2013copositive}.

\subsection{Definitions}

A symmetric matrix $A \in \S^n$ is called \emph{completely positive} if it admits a decomposition of the form:
\begin{equation}
 \label{eq:cp-factorization}
 A = \sum_{i=1}^r a_i a_i^T
\end{equation}
where each $a_i \in \RR^n$ is \emph{nonnegative}.
The cp-rank of $A$, denoted $\cprank(A)$ is the smallest $r$ for which $A$ admits a decomposition \eqref{eq:cp-factorization} where the number of rank-one terms is $r$. The cp-rank clearly satisfies
\[ \rank_+(A) \leq \cprank(A). \]
One can also show using Carath\'{e}odory theorem that $\cprank(A)$ is always bounded above by $n(n+1)/2$. More refined upper bounds on $\cprank$ exist, for example in terms of $\rank(A)$. We refer the reader to \cite[Section 3.2]{berman2003completely} for more information.

Observe that in any cp-factorization $A=\sum_{i=1}^k A_i$ where $A_i = a_i a_i^T$ with $a_i \geq 0$, we have
\begin{equation}
\label{eq:cpatomsineq}
 0 \leq A_i \leq A \quad \text{ and } \quad 0 \preceq A_i \preceq A
\end{equation}
where $\leq$ indicates componentwise inequality, and $\preceq$ indices inequality with respect to the positive semidefinite cone\footnote{Actually note that we can even write $0 \preceq_{\mathcal C} A_i \preceq_{\mathcal C} A$ where $\mathcal{C}$ is the cone of completely positive matrices. Since checking membership in the completely positive cone is hard \cite{dickinson2011computational}, we consider here only the tractable conditions \eqref{eq:cpatomsineq}.}. Following the ideas described in the previous sections, this leads to define the following set (where subscript `cp' indicates `completely positive'):
\[ \cA_{\cp}(A) = \left\{ R \in \S^n \; : \; \rank R \leq 1, \;\; 0\leq R \leq A, \;\; 0 \preceq R \preceq A\right\}. \]
If we introduce the quantity:
\[ 
\begin{aligned}
\tau_{\cp}(A) &= \; \min \quad \; t \; \quad \text{ subject to } \; A \in t\conv(\cA_{\cp}(A))\\
              &= \max_{L \text{ linear}} L(A) \; \text{ subject to } \; L(R) \leq 1 \; \forall R \in \cA_{\cp}(A)
\end{aligned}
\]
we can easily verify that:
\[ \tau_{\cp}(A) \leq \cprank(A). \]

\subsection{Semidefinite programming relaxation}
\label{sec:rank+sdp}

In this section we see how to obtain a semidefinite programming relaxation of $\tau_{\cp}(A)$. We proceed the same way as in section \ref{sec:tau+sdprelaxation} by constructing an over-relaxation of $\conv(\cA_{\cp}(A))$ which can be described using linear matrix inequalities.

Let $R \in \cA_{\cp}(A)$ and consider $\vec(R) \in \RR^{n^2}$ the vectorization of $R$ obtained by stacking the columns of $R$ on top of each other.
 We saw that the rank-one constraints on $R$ correspond to linear inequalities on the entries of $\vec(R)\vec(R)^T$, namely:
\begin{equation}
 \label{eq:cp-rankone-constraint}
 (\vec(R)\vec(R)^T)_{ij,kl} = (\vec(R)\vec(R)^T)_{il,kj}
\end{equation}
for all $(1,1) \leq (i,j) < (k,l) \leq (n,n)$. Furthermore, we saw that the componentwise inequality $0\leq R \leq A$ implies that:
\begin{equation}
 \label{eq:cp-componentineq-constraint}
 (\vec(R)\vec(R)^T)_{ij,ij} \leq R_{ij} A_{ij}
\end{equation}
for any $i,j\in [n]$.
Now, since we are dealing with cp-factorizations, we have the additional inequalities $0 \preceq R \preceq A$ which we can also exploit. An important observation here is that since $R$ is rank-one and positive semidefinite, we have:
\begin{equation}
 \label{eq:equalityrank1psd}
 \vec(R) \vec(R)^T = R \otimes R,
\end{equation}
where $\otimes$ denotes the Kronecker product of matrices. To see why \eqref{eq:equalityrank1psd} is true, let $x \in \RR^n$ such that $R = xx^T$. Let $\alpha \in [n^2]$ and $\beta \in [n^2]$ and let $(\alpha_1, \alpha_2) \in [n]^2$ and $(\beta_1,\beta_2) \in [n]^2$ be the unique pairs such that
\[ \alpha = (\alpha_2-1)n + \alpha_1 \quad \text{ and } \quad \beta = (\beta_2 - 1)n + \beta_1. \]
By definition of the operation $\vec$, we have $\vec(R)_{\alpha} = R_{\alpha_1,\alpha_2}$. Thus:
\[ (\vec(R) \vec(R)^T)_{\alpha,\beta} = R_{\alpha_1,\alpha_2} R_{\beta_1,\beta_2} = x_{\alpha_1} x_{\alpha_2} x_{\beta_1} x_{\beta_2}. \]
By definition of $R\otimes R$, we have:
\[ (R\otimes R)_{\alpha,\beta} = R_{\alpha_1,\beta_1} R_{\alpha_2,\beta_2} = x_{\alpha_1} x_{\alpha_2} x_{\beta_1} x_{\beta_2}. \]
This is true for any $\alpha,\beta \in [n^2]$ and thus we have $\vec(R)\vec(R)^T = R\otimes R$.
Now note that the matrix $R\otimes R$ satisfies $R\otimes R \preceq R\otimes A$: this is because $R\otimes A - R\otimes R = R\otimes (A-R) \succeq 0$ since $R \succeq 0$ and $A-R \succeq 0$ and the Kronecker product of positive semidefinite matrices is positive semidefinite. Thus we have the inequality:
\begin{equation}
 \label{eq:cp-psdineq-constraint}
 \vec(R) \vec(R)^T \preceq R \otimes A.
\end{equation}
If we now combine the observations above, we get the following over-relaxation of $\conv(\cA_{\cp}(A))$:
\[ \conv(\cA_{\cp}(A)) \subseteq \cA_{\cp}^{\sos}(A) \]
where
\begin{equation}
\label{eq:convAsos-cprank}
\begin{aligned}
\cA_{\cp}^{\sos}(A) = \Biggl\{ R \in \S^{n} \; : \; & \exists X \in \S^{n^2} \; \text{ such that } \; \begin{bmatrix}
1 & \vec(R)^T\\
\vec(R) & X
\end{bmatrix} \succeq 0\\
& \qquad \text{ and } X_{ij,ij} \leq R_{ij} A_{ij} \quad \forall i \in [m],j \in [n]\\
& \qquad \text{ and } X \preceq R \otimes A\\
& \qquad \text{ and } X_{ij,kl} - X_{il,kj} = 0 \quad \forall (1,1)\leq (i,j) < (k,l) \leq (m,n)\Biggr\}.
\end{aligned}
\end{equation}
This leads to the following relaxation $\tau_{\cp}^{\sos}(A)$ of $\tau_{\cp}(A)$:
\[ \tau_{\cp}^{\sos}(A) = \min \{ t \; : \; A \in t\cA^{\sos}_{\cp}(A)\}, \]
which satisfies:
\[ \tau_{\cp}^{\sos}(A) \leq \tau_{\cp}(A) \leq \cprank(A). \]
The function $\tau_{\cp}^{\sos}(A)$ can be computed by the following semidefinite program:
\begin{equation}
\begingroup
\renewcommand*{\arraystretch}{1.3}
\label{eq:taucpsos_min}
\begin{array}{rrl}
\tau_{\cp}^{\sos}(A) &= 
\;\;\text{min.} & t\\
&\text{s.t.} &
\begin{bmatrix} t & \vec(A)^T\\ \vec(A) & X \end{bmatrix} \succeq 0\\
&& X_{ij,ij} \leq A_{ij}^2 \quad \forall i,j \in [n]\\
&& X \preceq A \otimes A\\
&& X_{ij,kl} = X_{il,kj} \quad \forall (1,1) \leq (i,j) < (k,l) \leq (n,n)
\end{array}
\endgroup
\end{equation}
Note that this is very similar to the semidefinite program \eqref{eq:tau+sos_min} for $\tau_+^{\sos}(A)$ except for the additional constraint $X \preceq A\otimes A$.


\subsection{Properties}

The quantities $\tau_{\cp}$ and $\tau_{\cp}^{\sos}$ satisfy the same properties as those satisfied by $\tau_+$ and $\tau_{+}^{\sos}$ shown in section \ref{sec:tau+properties}. We summarize these properties below (the proofs are omitted since they are very similar to those from section \ref{sec:tau+properties}):

\begin{theorem}
Let $A$ be a completely positive matrix of size $n$.
\begin{enumerate}
\item Invariance under diagonal scaling: If $D$ is a diagonal matrix, with strictly positive entries on the diagonal, then $\tau_{\cp}(D A D) = \tau_{\cp}(A)$ and $\tau_{\cp}^{\sos}(D A D) = \tau_{\cp}^{\sos}(A)$.
\item Invariance under permutation: If $P$ is a permutation matrix, then $\tau_{\cp}(PAP^T) = \tau_{\cp}(A)$ and $\tau_{\cp}^{\sos}(P A P^T) = \tau_{\cp}^{\sos}(A)$.
\item Subadditivity: If $B$ is another completely positive matrix, then:
\[ \tau_{\cp}(A+B) \leq \tau_{\cp}(A) + \tau_{\cp}(B) \quad \text{ and } \quad \tau_{\cp}^{\sos}(A+B) \leq \tau_{\cp}^{\sos}(A) + \tau_{\cp}^{\sos}(B). \]
\item Monotonicity: If $B$ is a submatrix of $A$ (i.e., $B = A[I,I]$ for some $I\subseteq [n]$), then $\tau_{\cp}(B) \leq \tau_{\cp}(A)$ and $\tau_{\cp}^{\sos}(B) \leq \tau_{\cp}^{\sos}(A)$.
\item Block-diagonal composition: Let $B \in \S^{n'}$ be another completely positive matrix and define
\[ A \oplus B = \begin{bmatrix} A & 0\\ 0 & B \end{bmatrix} \in \S^{n+n'}. \]
Then
\[ \tau_{\cp}(A\oplus B) = \tau_{\cp}(A) + \tau_{\cp}(B) \quad \text{ and } \quad
   \tau_{\cp}^{\sos}(A\oplus B) = \tau_{\cp}^{\sos}(A) + \tau_{\cp}^{\sos}(B). \]
\end{enumerate}
\end{theorem}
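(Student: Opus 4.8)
The plan is to reproduce, almost verbatim, the proofs of Theorem~\ref{thm:tau+properties} (invariance under diagonal scaling is carried out in Section~\ref{sec:tau+properties}, the remaining items in Appendix~\ref{sec:appendix-props}); the only genuinely new ingredient is that the atom set $\cA_{\cp}(A)$ carries the extra Loewner constraint $0\preceq R\preceq A$, and the defining SDP \eqref{eq:taucpsos_min} of $\tau_{\cp}^{\sos}$ carries the extra constraint $X\preceq A\otimes A$, so at each step one has to check that these two constraints transform correctly as well. Throughout I would work with the two dual descriptions $\tau_{\cp}(A)=\min\{t>0:A\in t\conv(\cA_{\cp}(A))\}=\max\{L(A):L\text{ linear},\,L\le 1\text{ on }\cA_{\cp}(A)\}$, and, for $\tau_{\cp}^{\sos}$, with the primal SDP \eqref{eq:taucpsos_min} together with its sum-of-squares dual (the evident analogue of \eqref{eq:tau+sos_max} with one additional matrix multiplier coming from $X\preceq A\otimes A$).

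For \emph{diagonal scaling}, I would first show $\cA_{\cp}(DAD)=D\,\cA_{\cp}(A)\,D$: if $R$ is rank-one and PSD with $0\le R\le A$ and $0\preceq R\preceq A$, then $DRD$ is rank-one and PSD, $0\le DRD\le DAD$, and $0\preceq DRD\preceq DAD$ because congruence preserves the Loewner order; the reverse inclusion uses $D^{-1}$ (here $D$ has strictly positive diagonal). The chain of equalities \eqref{eq:chain-equalities-invariance-diagona-scaling} then gives $\tau_{\cp}(DAD)=\tau_{\cp}(A)$. For $\tau_{\cp}^{\sos}$, the map $(t,X)\mapsto(t,(D\otimes D)X(D\otimes D))$ is a bijection between the feasible sets of the SDPs for $A$ and for $DAD$: it preserves the block-PSD constraint since $\vec(DAD)=(D\otimes D)\vec(A)$, it sends $X_{ij,ij}\le A_{ij}^2$ to the analogous bound for $DAD$, it sends $X\preceq A\otimes A$ to $X'\preceq(DAD)\otimes(DAD)$ because $(D\otimes D)(A\otimes A)(D\otimes D)=(DAD)\otimes(DAD)$, and it preserves the linear rank-one relations. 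Invariance under a permutation $P$ is the same argument with the congruence $P$ in place of $D$ (now an exact symmetry, so the index set of $X$ is simply relabelled by the induced permutation on pairs), or it is immediate from the gauge description via $\cA_{\cp}(PAP^T)=P\,\cA_{\cp}(A)\,P^T$.

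For \emph{subadditivity} of $\tau_{\cp}$, take optimal decompositions $A=\tau_{\cp}(A)\sum_k\lambda_k R_k$ and $B=\tau_{\cp}(B)\sum_\ell\mu_\ell S_\ell$; each $R_k$ and $S_\ell$ is rank-one PSD and satisfies $0\le\cdot\le A+B$ and $0\preceq\cdot\preceq A+B$, hence lies in $\cA_{\cp}(A+B)$, and a suitable reweighting exhibits $A+B$ as $(\tau_{\cp}(A)+\tau_{\cp}(B))$ times a convex combination of these atoms. For $\tau_{\cp}^{\sos}$, if $(t,X)$ and $(s,Y)$ are optimal for $A$ and $B$, then $(t+s,\,X+Y)$ is feasible for the SDP of $A+B$: the block-PSD constraint holds because it is the sum of the two PSD block matrices for $A$ and $B$; $(X+Y)_{ij,ij}\le A_{ij}^2+B_{ij}^2\le(A_{ij}+B_{ij})^2$ using that completely positive matrices are entrywise nonnegative; the rank-one relations are linear; and $X+Y\preceq A\otimes A+B\otimes B\preceq(A+B)\otimes(A+B)$, the last step because $A\otimes B+B\otimes A\succeq 0$ (Kronecker products of PSD matrices are PSD). \emph{Monotonicity} follows by restriction: for a principal submatrix $B=A[I,I]$, restricting each atom $R_k$ of an optimal decomposition of $A$ to its $I\times I$ principal submatrix gives an element of $\cA_{\cp}(B)$ (principal submatrices of PSD matrices are PSD, and $A\otimes A$ restricted to the index pairs coming from $I\times I$ equals $A[I,I]\otimes A[I,I]$), and likewise restricting a feasible $X$ for \eqref{eq:taucpsos_min} to those index pairs is feasible for $B$.

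I expect \emph{block-diagonal composition} to need the most care, so I would do it last and in two directions. For ``$\le$'', each matrix $R_k\oplus 0$ and $0\oplus S_\ell$ coming from optimal decompositions of $A$ and $B$ is rank-one PSD and obeys the componentwise and Loewner bounds by $A\oplus B$, hence lies in $\cA_{\cp}(A\oplus B)$, which exhibits $A\oplus B$ as $(\tau_{\cp}(A)+\tau_{\cp}(B))$ times a convex combination of atoms; for $\tau_{\cp}^{\sos}$ one instead takes the direct sum of the two SDP solutions padded with zero rows and columns, which is forced on any feasible solution by the zero entries of $A\oplus B$ exactly as in the ``zero entries'' reduction of Section~\ref{sec:tau+sdprelaxation}. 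For ``$\ge$'', I would use the dual: let $L_A,L_B$ be optimal linear functionals and set $L(M)=L_A(M_1)+L_B(M_2)$, where $M_1,M_2$ are the diagonal blocks of $M\in\S^{n+n'}$; any $R\in\cA_{\cp}(A\oplus B)$ has vanishing off-diagonal block (forced by $0\le R\le A\oplus B$) and rank $\le 1$, so at most one diagonal block is nonzero and that block lies in $\cA_{\cp}(A)$ (resp. $\cA_{\cp}(B)$), whence $L(R)\le 1$ and $\tau_{\cp}(A\oplus B)\ge L_A(A)+L_B(B)$. For $\tau_{\cp}^{\sos}$, either run the same argument with the sum-of-squares dual of \eqref{eq:taucpsos_min}, or show directly that every feasible $X$ decouples along the block structure (the ``mixed'' rows and columns are forced to zero, and $X\preceq(A\oplus B)\otimes(A\oplus B)$ together with the diagonal bounds leaves only the two diagonal sub-blocks, feasible for the SDPs of $A$ and $B$). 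The one recurring obstacle is the bookkeeping to see that the Kronecker-structured constraint $X\preceq A\otimes A$ is stable under each operation, which in every case reduces to one of the elementary identities $(D\otimes D)(A\otimes A)(D\otimes D)=(DAD)\otimes(DAD)$, $(A+B)\otimes(A+B)\succeq A\otimes A+B\otimes B$, the compatibility of Kronecker products with passing to principal submatrices, and the block-diagonal analogue.
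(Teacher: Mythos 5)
Your proposal is correct and is exactly what the paper has in mind: the paper omits these proofs with the remark that they are ``very similar to those from section \ref{sec:tau+properties},'' and your plan carries out precisely that, with the only new work being the bookkeeping for the Loewner constraints $0\preceq R\preceq A$ in $\cA_{\cp}(A)$ and $X\preceq A\otimes A$ in \eqref{eq:taucpsos_min}. All the key Kronecker facts you invoke --- $(D\otimes D)(A\otimes A)(D\otimes D)=(DAD)\otimes(DAD)$, $A\otimes B+B\otimes A\succeq 0$ for PSD $A,B$, compatibility with principal submatrices, and the block-diagonal decoupling $(A\oplus B)\otimes(A\oplus B)=(A\otimes A)\oplus(B\otimes B)\oplus\bigl((A\oplus B)\otimes(A\oplus B)\bigr)_{S_3,S_3}$ on the partition into ``both-in-block-1,'' ``both-in-block-2,'' and ``mixed'' index pairs --- check out. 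The one modest deviation from the paper's template is that for diagonal invariance of $\tau_{\cp}^{\sos}$ you work with the primal SDP rather than the SOS dual as the paper does for $\tau_+^{\sos}$; both are valid and the primal route is arguably cleaner here since it avoids writing out the cp analogue of \eqref{eq:tau+sos_max}.
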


\subsection{Comparison with existing lower bounds on cp-rank}

In this section we compare the lower bounds $\tau_{\cp}(A)$ and $\tau_{\cp}^{\sos}(A)$ to other existing lower bounds on cp-rank.

\subsubsection{The plain rank lower bound}

If $A$ is a completely positive matrix, an obvious lower bound to $\cprank(A)$ is $\rank(A)$. It turns out that $\tau_{\cp}^{\sos}(A)$ satisfies the remarkable property $\tau_{\cp}^{\sos}(A) \geq \rank(A)$:

\begin{theorem} Let $A$ be a completely positive matrix of size $n$. Then
\[ \tau_{\cp}^{\sos}(A) \geq \rank(A). \]
\end{theorem}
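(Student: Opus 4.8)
The plan is to use only two of the four constraints defining $\tau_{\cp}^{\sos}(A)$: the positive semidefiniteness of the bordered matrix $\left[\begin{smallmatrix} t & \vec(A)^{T} \\ \vec(A) & X\end{smallmatrix}\right]$ and the constraint $X\preceq A\otimes A$. Let $(t,X)$ be any feasible point of the semidefinite program \eqref{eq:taucpsos_min}; I will show $t\geq\rank(A)$. If $t=0$ the bordered PSD matrix has a zero diagonal entry, which forces its first row to vanish, i.e. $\vec(A)=0$, hence $A=0$ and $\rank(A)=0$; so assume $t>0$. Taking the Schur complement with respect to the $(1,1)$ block gives $X\succeq \tfrac1t\,\vec(A)\vec(A)^{T}$, and combining this with $X\preceq A\otimes A$ yields the single matrix inequality
\begin{equation}
\label{eq:plan-matrixineq}
\vec(A)\vec(A)^{T} \;\preceq\; t\,(A\otimes A).
\end{equation}
Since $A$ is completely positive it is in particular positive semidefinite, so $A\otimes A\succeq 0$; this (together with the derivation of the constraint $X\preceq A\otimes A$ in the SDP) is the only place the hypothesis on $A$ is used.

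It then remains to show that the smallest $t$ satisfying \eqref{eq:plan-matrixineq} is exactly $\rank(A)$. Writing $A=\sum_i\lambda_i q_iq_i^{T}$ for a spectral decomposition with the $\lambda_i>0$, we have $\vec(A)=\sum_i\lambda_i\,(q_i\otimes q_i)\in\range(A)\otimes\range(A)=\range(A\otimes A)$, so we may write $\vec(A)=(A\otimes A)z$ for some $z$. Applying \eqref{eq:plan-matrixineq} to the vector $z$ gives $(z^{T}\vec(A))^{2}\leq t\,z^{T}(A\otimes A)z$; since $z^{T}\vec(A)=z^{T}(A\otimes A)z=\vec(A)^{T}(A\otimes A)^{+}\vec(A)=:c\geq 0$, this reads $c^{2}\leq tc$, i.e. $c=0$ (which happens only when $A=0$) or $t\geq c$. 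Finally I evaluate $c$: using $(A\otimes A)^{+}=A^{+}\otimes A^{+}$ together with the identity $\vec(A)^{T}(A^{+}\otimes A^{+})\vec(A)=\trace(A\,A^{+}A\,A^{+})=\trace(AA^{+})$, and the fact that $AA^{+}$ is the orthogonal projector onto $\range(A)$, we get $c=\trace(AA^{+})=\rank(A)$. Hence $t\geq\rank(A)$, and since $(t,X)$ was an arbitrary feasible point, $\tau_{\cp}^{\sos}(A)\geq\rank(A)$.

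There is no serious obstacle here; the argument is short, and most of the work was already done in setting up the SDP. The one point that deserves care is the range/pseudoinverse bookkeeping — checking $\vec(A)\in\range(A\otimes A)$ (which is what makes \eqref{eq:plan-matrixineq} achievable with a finite $t$ and makes $(A\otimes A)^{+}$ the right object) and correctly evaluating $\vec(A)^{T}(A\otimes A)^{+}\vec(A)$. An equivalent, even more elementary route avoids pseudoinverses: multiplying \eqref{eq:plan-matrixineq} on the left by $(Q\otimes Q)^{T}$ and on the right by $Q\otimes Q$, where $A=Q\Lambda Q^{T}$, reduces it to $\vec(\Lambda)\vec(\Lambda)^{T}\preceq t\,(\Lambda\otimes\Lambda)$, and since $\vec(\Lambda)$ is supported exactly on the $\rank(A)$ diagonal positions $(i,i)$ with $\lambda_i>0$, where $\Lambda\otimes\Lambda$ has entry $\lambda_i^{2}$, this forces $t\geq\sum_{i:\lambda_i>0}\lambda_i^{2}/\lambda_i^{2}=\rank(A)$. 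I would present whichever of the two versions is cleanest to write out in full detail.
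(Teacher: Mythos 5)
Your argument is correct and matches the paper's proof in all essentials: Schur complement combined with the constraint $X\preceq A\otimes A$ yields $\vec(A)\vec(A)^{T}\preceq t\,(A\otimes A)$, and the minimal such $t$ equals $\rank(A)$. The paper isolates this last fact as a standalone lemma and proves it by the conjugation argument you describe as your second route; your pseudoinverse computation of $\vec(A)^{T}(A\otimes A)^{+}\vec(A)=\trace(AA^{+})=\rank(A)$ is an equivalent, coordinate-free version of the same evaluation.
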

\begin{proof}
Let $(t,X)$ be the optimal solution of the semidefinite program \eqref{eq:taucpsos_min} where $t=\tau_{\cp}^{\sos}(A)$. Using Schur complement theorem we have that $\vec(A)\vec(A)^T \preceq tX$. Furthermore we also have $X \preceq A \otimes A$, and thus if we combine these two inequalities we get:
\[ \vec(A)\vec(A)^T \preceq t (A\otimes A). \]
Hence by Lemma \ref{lem:rankpsdvar} below we necessarily have $t \geq \rank(A)$.

\begin{lemma} 
\label{lem:rankpsdvar}
Let $A \in \S^n_+$ be a $n\times n$ positive semidefinite matrix. Then
\[ \rank(A) = \min \left\{ t \; : \; \vec(A) \vec(A)^T \preceq t A\otimes A \right\}. \]
\end{lemma}
\begin{proof}
Let $A = PDP^T$ be an eigenvalue decomposition of $A$ where $P$ is an orthogonal matrix and $D$ is a diagonal matrix where the diagonal elements are sorted in decreasing order. Let $r=\rank(A)=|\{i : D_{i,i} > 0\}|$ and denote by $I_r$ the identity matrix where only the first $r$ entries are set to 1 (the other entries are zero). For $t\geq 0$, the following equivalences hold:
\begin{equation}
\label{eq:gen-eigen-1}
\begin{aligned}
\vec(A)\vec(A)^T \preceq t A \otimes A \;\; & \Longleftrightarrow \;\; \vec(PDP^T)\vec(PDP^T)^T \preceq t (P\otimes P) (D\otimes D) (P^T \otimes P^T)\\
& \overset{\text{(a)}}{\Longleftrightarrow} \;\; (P\otimes P) \vec(D)\vec(D)^T (P^T \otimes P^T) \preceq t (P\otimes P) (D\otimes D) (P^T \otimes P^T)\\
& \overset{\text{(b)}}{\Longleftrightarrow} \;\; \vec(D)\vec(D)^T \preceq t (D\otimes D)\\
& \overset{\text{(c)}}{\Longleftrightarrow} \;\; \vec(I_r)\vec(I_r)^T \preceq t (I_r\otimes I_r)
\end{aligned}
\end{equation}
where in (a) we used the well-known identity $\vec(PDP^T) = (P\otimes P)\vec(D)$, in (b) we conjugated by $P\otimes P$ and in (c) we conjugated with $D^{-1/2} \otimes D^{-1/2}$ (where $(D^{-1/2})_{i,i} = (D_{ii})^{-1/2}$ if $D_{i,i} > 0$, else $(D^{-1/2})_{i,i} = 1$).
The lemma thus reduces to show that 
\[ \min \left\{ t \; : \; \vec(I_r)\vec(I_r)^T \preceq t (I_r \otimes I_r) \right\} = r. \]
 This is easy to see because $\vec(I_r)$ is an eigenvector of $\vec(I_r)\vec(I_r)^T$ with eigenvalue $r$, and it is the only eigenvector of $\vec(I_r)\vec(I_r)^T$ with a nonzero eigenvalue. Furthermore, $\vec(I_r)$ is also an eigenvector of $t(I_r \otimes I_r)$ with eigenvalue $t$. Thus the smallest $t$ such that $\vec(I_r)\vec(I_r)^T \preceq t (I_r \otimes I_r)$ is $r$.
\end{proof}
\if\mapr1\qed\fi
\end{proof}
\subsubsection{Combinatorial lower-bounds on cp-rank}

Given a $n\times n$ completely positive matrix $A$, let $G(A)$ be the graph whose adjacency matrix is $A$, i.e., $G$ has $n$ vertices and $i \in [n]$ and $j\in[n]$ are connected by an edge if $A_{i,j} > 0$. Observe that any cp-factorization of $A$:
\[ A = \sum_{i=1}^r a_i a_i^T, \]
where $a_i \geq 0$ yields a covering of the edges of $G(A)$ using $r$ cliques of $G(A)$. Indeed, the support of each rank-one term $a_i a_i^T$ corresponds to a clique of $G(A)$, and each nonzero entry of $A$ (i.e., each edge of $G(A)$) is covered by at least one such clique. This simple observation yields the following lower-bound on the cp-rank of $A$:
\[ \cprank(A) \geq c(G(A)), \]
where $c(G(A))$ is the \emph{edge clique-cover number} of $G(A)$, i.e., the smallest number of cliques of $G(A)$ needed to cover the edges of $G(A)$. Note that $c(G)$ is NP-hard to compute in general. Also note that for a graph $G=(V,E)$, $c(G)$ is the solution of the following integer program where the variables $x_C$ are indexed by the cliques $C$ of $G$:
\[
c(G) = 
\begin{array}[t]{ll}
\text{min} & \sum_{C} x_C\\
\text{s.t.} & x_C \in \{0,1\} \quad \forall C \text{ clique of $G$}\\
                  & \sum_{C \; : \; e \in C } x_C \geq 1 \quad \forall e \in E.
\end{array}
\]
Define the \emph{fractional edge-clique cover number} of $G$, denoted $c_{\fra}(G)$ to be the linear programming relaxation of the integer program above, where the integer constraints $x_C \in \{0,1\}$ are replaced by $x_C \geq 0$ \cite{scheinerman1999fractional}:
\begin{equation}
\label{eq:cfraclp}
c_{\fra}(G) = 
\begin{array}[t]{ll}
\text{min} & \sum_{C} x_C\\
\text{s.t.} & x \geq 0\\
                  & \sum_{C \; : \; e \in C } x_C \geq 1 \quad \forall e \in E
\end{array}
\end{equation}
%
Note that the linear program \eqref{eq:cfraclp} is hard to compute in general since it has an exponential number of variables. Clearly we have $c_{\fra}(G(A)) \leq c(G(A))$. Actually, Lov\'{a}sz showed in \cite{lovasz1975ratio} that $c_{\fra}(G)$ is always within a $\ln |E|$ factor from $c(G)$ for any graph $G=(V,E)$:
\[ \frac{1}{1+\ln|E|} c(G) \leq c_{\fra}(G) \leq c(G). \]

We will now rewrite the linear program \eqref{eq:cfraclp} for $c_{\fra}(G)$ in a slightly different way in order to show its connection with the quantity $\tau_{\cp}(A)$ (we will in fact show in a theorem below that $\tau_{\cp}(A) \geq c_{\fra}(A)$). Define $\cA_{\ecc}(A)$ to be the set of adjacency matrices representing cliques in $G(A)$:
\[ \cA_{\ecc}(A) = \left\{ R = bb^T \text{ where } b \in \{0,1\}^n \text{ and $R$ is monochromatic for $A$ } \right\}. \]
We can rewrite the fractional edge-clique cover number of $G(A)$ as follows:
\begin{equation}
\label{eq:fracecc}
 c_{\fra}(G(A)) = 
\begin{array}[t]{ll}
\text{min} & t\\
\text{s.t.} & \exists Y \in t \conv(\cA_{\ecc}(A)) \; \text{ s.t. } \; \forall (i,j), \;\; A_{i,j} > 0 \; \Rightarrow \; Y_{i,j} \geq 1
\end{array}
\end{equation}
We prove the following:
\begin{theorem}
If $A$ is a completely positive matrix, then:
\[ \tau_{\cp}(A) \geq c_{\fra}(G_A). \]
\end{theorem}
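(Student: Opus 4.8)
The plan is to adapt, essentially verbatim, the argument used in part (1) of Theorem~\ref{thm:relation-tau-RGA} (the proof that $\tau_+(A) \geq \chi_{\fra}(\RG(A))$), with the single extra observation that the support pattern of a rank-one \emph{positive semidefinite} nonnegative matrix is the adjacency matrix of a clique, rather than merely a combinatorial rectangle. This is exactly what converts the rectangle-cover bound into an edge-clique-cover bound.

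Concretely, I would set $t = \tau_{\cp}(A)$ and pick $X \in \conv(\cA_{\cp}(A))$ with $A = tX$, writing $X = \sum_{k=1}^N \lambda_k X_k$ with $\lambda_k \geq 0$, $\sum_k \lambda_k = 1$, and each $X_k \in \cA_{\cp}(A)$. Since $X_k$ is rank one and positive semidefinite, $X_k = x_k x_k^T$ for some vector $x_k$; and since $X_k$ is entrywise nonnegative, all nonzero entries of $x_k$ share a common sign, so after possibly replacing $x_k$ by $-x_k$ we may take $x_k \geq 0$. Let $b_k \in \{0,1\}^n$ be the indicator of $\supp(x_k)$ and set $R_k = b_k b_k^T$. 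Then $(R_k)_{ij} = 1$ precisely when $i,j \in \supp(x_k)$, i.e.\ precisely when $(X_k)_{ij} > 0$; and because $X_k \leq A$ entrywise, positivity of $(X_k)_{ij}$ forces $A_{ij} > 0$, so $\supp(x_k)$ is a clique of $G(A)$ and hence $R_k \in \cA_{\ecc}(A)$.

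Next I would define $Y = t\sum_{k=1}^N \lambda_k R_k \in t\conv(\cA_{\ecc}(A))$ and verify the covering constraint of \eqref{eq:fracecc}: for any $(i,j)$ with $A_{ij} > 0$,
\[ Y_{ij} \;=\; t\!\!\sum_{k:(X_k)_{ij}>0}\!\! \lambda_k (R_k)_{ij} \;=\; t\!\!\sum_{k:(X_k)_{ij}>0}\!\! \lambda_k \;\geq\; t\!\!\sum_{k:(X_k)_{ij}>0}\!\! \lambda_k \frac{(X_k)_{ij}}{A_{ij}} \;=\; \frac{(tX)_{ij}}{A_{ij}} \;=\; 1, \]
where the inequality uses $(R_k)_{ij} = 1 \geq (X_k)_{ij}/A_{ij}$ whenever $(X_k)_{ij} > 0$ (again from $X_k \leq A$), and the terms with $(X_k)_{ij} = 0$ vanish in both sums. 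Hence $(t,Y)$ is feasible for the linear program \eqref{eq:fracecc}, which yields $c_{\fra}(G(A)) \leq t = \tau_{\cp}(A)$, as desired.

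The argument is largely routine; the only step worth isolating as the ``main obstacle'' is the elementary but essential claim that a rank-one, positive-semidefinite, entrywise-nonnegative matrix can be written as $xx^T$ with $x \geq 0$, so that its support is a genuine clique (a symmetric, ``square'' pattern) rather than an arbitrary rectangle $\supp(u)\times\supp(v)$. Note also that the positive-semidefiniteness constraints $0 \preceq R \preceq A$ built into $\cA_{\cp}(A)$ are not used here — they play no role in this particular bound, only entering elsewhere (e.g.\ in the comparison of $\tau_{\cp}^{\sos}$ with the plain rank).
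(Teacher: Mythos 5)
Your argument is correct and is essentially the paper's proof: the paper likewise takes $X\in\conv(\cA_{\cp}(A))$ with $A=tX$, writes $X=\sum_k\lambda_k X_k$, sets $R_k=\supp(X_k)$, and checks that $Y=t\sum_k\lambda_k R_k$ is feasible for the LP \eqref{eq:fracecc}, with exactly the same chain of (in)equalities $Y_{ij}=t\sum_{k:(X_k)_{ij}>0}\lambda_k\geq t\sum_k\lambda_k (X_k)_{ij}/A_{ij}=1$. The only step where you go beyond the paper is in spelling out why $R_k\in\cA_{\ecc}(A)$, which the paper states with a bare ``observe that''; your justification is correct. One small remark on your closing aside: as written, your proof does invoke $X_k\succeq 0$ (to get $X_k=x_kx_k^T$), although your underlying point is valid since a symmetric, rank-$\leq 1$, entrywise-nonnegative matrix is automatically of the form $xx^T$ with $x\geq 0$ (the diagonal entries $\lambda v_i^2\geq 0$ force the sign in $X_k=\lambda vv^T$), so the semidefinite-order constraints $0\preceq R\preceq A$ in $\cA_{\cp}(A)$ are indeed dispensable for this particular bound.
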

\begin{proof}
Let $t = \tau_{\cp}(A)$ and $X \in \conv(\cA_{\cp}(A))$ such that $A = t X$. Consider the decomposition of $X$:
\[ X = \sum_{k=1}^r \lambda_k X_k, \]
where $X_k \in \cA_{\cp}(A)$, $\lambda_k \geq 0$ and $\sum_{k=1}^r \lambda_k = 1$. Let $R_k = \supp(X_k)$ (i.e., $R_k$ is obtained by replacing the nonzero entries of $X_k$ with ones) and observe that $R_k \in \cA_{\ecc}(A)$. Define
\[ Y = t \sum_{k=1}^r \lambda_k R_k \in t \conv(\cA_{\ecc}(A)) \]
Observe that for any $(i,j)$ such that $A_{i,j} > 0$ we have:
\[ Y_{i,j} = t \sum_{k:X_k[i,j]>0} \lambda_k \underbrace{R_k[i,j]}_{=1} \overset{(a)}{\geq} t \sum_{k:X_k[i,j]} \lambda_k \frac{X_k[i,j]}{A_{i,j}} \overset{(b)}{=} \frac{A_{i,j}}{A_{i,j}} = 1 \]
where in (a) we used the fact that $X_k \leq A$ (by definition of $X_k \in \cA_{\cp}(A)$) and in (b) we used the fact that $A = t\sum_{k} \lambda_k X_k$. Thus this shows that $(t,Y)$ is feasible for the optimization program defining $c_{\fra}(G(A))$ and thus we have $c_{\fra}(G(A)) \leq t = \tau_{\cp}(A)$.
\end{proof}

\subsection{Example}

Consider the following matrix parameterized by $a,b \geq 0$:
\begin{equation}
\label{eq:example_matrix_A_cp}
A = \begin{bmatrix}
3+a & 0 & 1 & 1 & 1\\
0 & 3+a & 1 & 1 & 1\\
1 & 1 & 2+b & 0 & 0\\
1 & 1 & 0 & 2+b & 0\\
1 & 1 & 0 & 0 & 2+b
\end{bmatrix}.
\end{equation}
When $a,b \geq 0$, the matrix $A$ is nonnegative and diagonally dominant and hence is completely positive \cite[Theorem 2.5]{berman2003completely}. One can show that the cp-rank of $A$ is equal to 6 for any $a,b \geq 0$. Indeed observe that $A$ is the adjacency matrix of the graph $K_{2,3}$ (complete bipartite graph) which has edge-clique cover number of 6, and thus necessarily $\cprank(A) \geq 6$. Also it is known\footnote{It is conjectured that any completely positive matrix of size $n\times n$ has cprank $\leq n^2/4$. The conjecture is known to be true for $n=5$, and this means that any $5\times 5$ completely positive matrix has cprank $\leq 6$. The conjecture is known as the DJL conjecture \cite[p.157]{berman2003completely}} that any $5\times 5$ completely positive matrix has cp-rank $\leq$ 6 \cite[Theorem 3.12]{berman2003completely}.

We have computed the value of $\tau_{\cp}^{\sos}(A)$ for different values of $a,b$ and we show the result of these computations in Figure \ref{fig:example_cprank_plots}: the left figure shows the plot of $\tau_{\cp}^{\sos}(A)$ as a function of $a,b$; the right figure shows the region of values $a,b$ where $\tau_{\cp}^{\sos}(A) > 5$.

\begin{figure}[!ht]
  \centering
  \includegraphics[width=8cm]{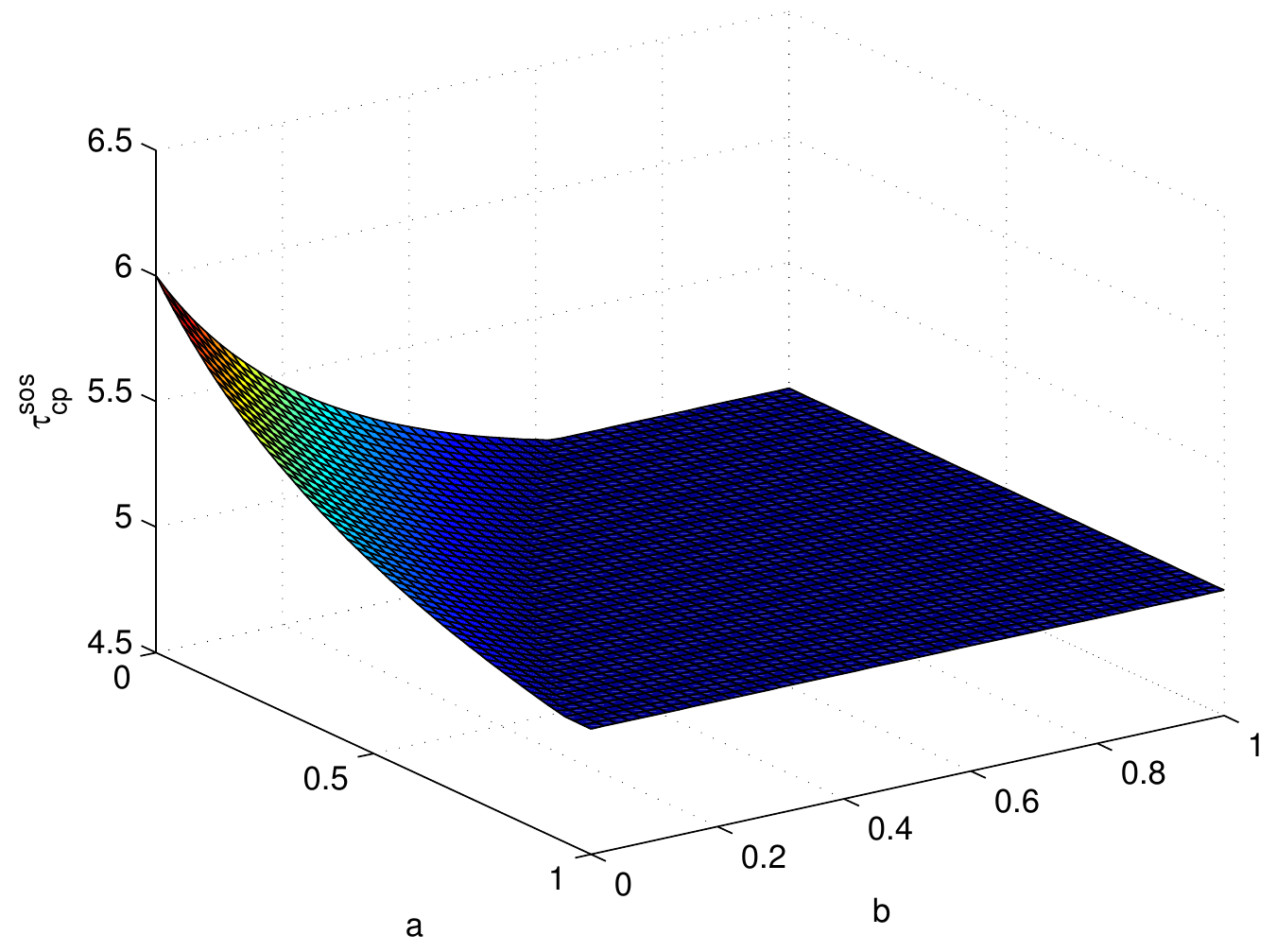}\quad
  \includegraphics[width=8cm]{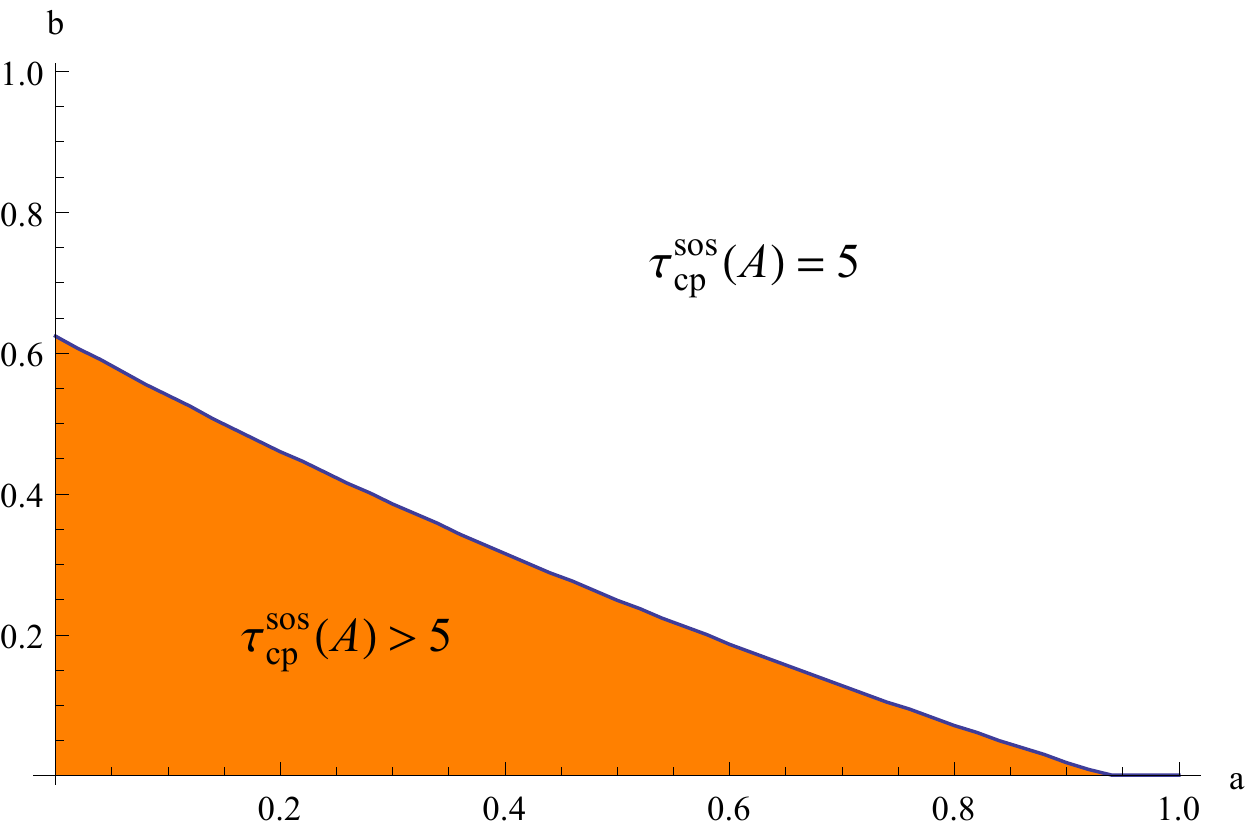}
  \caption{Left: Plot of $\tau_{\cp}^{\sos}(A)$ as a function of $a,b$ (the matrix $A$ is defined in \eqref{eq:example_matrix_A_cp}). Right: Region where $\tau_{\cp}^{\sos}(A) > 5$. Note that for $a=b=0$ we have $\tau_{\cp}^{\sos}(A) = 6$, and for $(a,b)$ outside the colored region, we have $\tau_{\cp}^{\sos}(A) = 5$}
  \label{fig:example_cprank_plots}
\end{figure}

\section{Summary and conclusion}

In this paper we proposed a general method based on convex optimization to obtain lower bounds on so-called \emph{atomic cone ranks}.
We focused on two important special cases which are the \emph{nonnegative rank} and \emph{cp-rank}. In these cases we saw that our lower bound improves on the existing bounds and enjoy in addition appealing structural properties. There are also other examples of \emph{atomic cone ranks} that one could study using the framework developed in this paper. For example, one interesting application mentioned earlier is to obtain lower bounds on the sizes of cubature formulas \cite{konig1999cubature}.

Note that there are other notions of \emph{cone ranks} which do not fit in the atomic framework described here. One example which has received a lot of attention recently is the \emph{psd rank} defined in \cite{gouveia2011lifts} and which has applications in semidefinite lifts of polytopes. Given a nonnegative matrix $A \in \RR^{m\times n}_+$, the psd rank of $A$ is the smallest $r$ for which we can find $r\times r$ positive semidefinite matrices $U_i, V_j$ such that $A_{ij} = \langle U_i, V_j \rangle$ for all $i,j$. Unlike the nonnegative rank, the psd rank is not an \emph{atomic rank} since the matrices with psd rank one are precisely the matrices that have nonnegative rank one. This non-atomic feature makes the psd rank more difficult to study, and there are currently no good methods known to obtain strong lower bounds on the psd rank.


\appendix

\section{Proof of properties of $\tau_+$ and $\tau_+^{\sos}$}
\label{sec:appendix-props}

\subsection{Invariance under permutation}

 The proof of invariance under permutation is very similar to the one for invariance under diagonal scaling. To prove the claim for $\tau_+$ one proceeds by showing that the set of atoms $\cA_+(A')$ of $A'=P_1 A P_2$ can be obtained from the atoms of $A$ by applying the permutations $P_1$ and $P_2$, namely:
\[ \cA_+(A') = \{ P_1 R P_2 \; : \; R \in \cA_+(A) \} =: P_1 \cA_+(A) P_2. \]
For the SDP relaxation we also use the same idea as the previous proof by constructing a certificate $L'$ for $A'$ using the certificate $L$ for $A$. We omit the details here since they are very similar to the previous proof.


\subsection{Subadditivity}

\begin{enumerate}
\item We first prove the subadditivity property for $\tau_+$, i.e., $\tau_+(A+B) \leq \tau_+(A)+\tau_+(B)$. Observe that we have 
\begin{equation}
\label{eq:inclusion}
\cA_+(A) \cup \cA_+(B) \subseteq \cA_+(A+B).
\end{equation}
Indeed if $R \in \cA_+(A)$, i.e., $R$ is rank-one and $0\leq R \leq A$, then we also have $0 \leq R \leq A+B$ (since $B$ is nonnegative) and thus $R \in \cA_+(A+B)$. Thus this shows $\cA_+(A) \subseteq \cA_+(A+B)$, and the same reason gives $\cA_+(B) \subseteq \cA_+(A+B)$, and thus we get \eqref{eq:inclusion}. By definition of $\tau_+(A)$ and $\tau_+(B)$, we know there exist decompositions of $A$ and $B$:
\[ A = \sum_{i} \alpha_i R_i \]
and
\[ B = \sum_{j} \beta_j S_j \]
where $R_i \in \cA_+(A)$ for all $i$ and $\sum_{i} \alpha_i = \tau_+(A)$, and $S_j \in \cA_+(B)$ for all $j$ and $\sum_{j} \beta_j = \tau_+(B)$. Thus this leads to:
\[ A+B = \sum_{i} \alpha_i R_i + \sum_{j} \beta_j S_j \]
where $R_i \in \cA_+(A+B)$ and $S_j \in \cA_+(A+B)$ for all $i$ and $j$. This decomposition shows that
\[ \tau_+(A+B) \leq \tau_+(A) + \tau_+(B). \]
\item We now prove the property for $\tau_+^{\sos}$. Let $(t,X)$ and $(t',X')$ be the optimal points of the semidefinite program \eqref{eq:tau+sos_min} for $A$ and $B$ respectively (i.e., $t = \tau_+^{\sos}(A)$ and $t' = \tau_+^{\sos}(B)$). It is not hard to see that $(t+t',X+X')$ is feasible for the semidefinite program that defines $\tau_+^{\sos}(A+B)$ (in particular we use the fact that since $A$ and $B$ are nonnegative we have $A_{ij}^2 + B_{ij}^2 \leq (A_{ij}+B_{ij})^2$). Thus this shows that $\tau_+^{\sos}(A+B) \leq \tau_+^{\sos}(A) + \tau_+^{\sos}(B)$.
\end{enumerate}

\subsection{Product}
\label{app:product}

\begin{enumerate}
\item We first show the property for $\tau_+$. We need to show that $\tau_+(AB) \leq \min(\tau_+(A),\tau_+(B))$. To see why note that if $R \in \cA_+(A)$, then $RB \in \cA_+(AB)$. thus if we have $A = \sum_{i} \alpha_i R_i$ with $R_i \in \cA_+(A)$ and $\sum_i \alpha_i = \tau_+(A)$, then we get $AB = \sum_i \alpha_i R_i B$ where each $R_iB \in \cA_+(AB)$ and thus $\tau_+(AB) \leq \sum_i \alpha_i = \tau_+(A)$. The same reasoning shows that $\tau_+(AB) \leq \tau_+(B)$, and thus we get $\tau_+(AB) \leq \min(\tau_+(A),\tau_+(B))$.
\item We now prove the property for $\tau_+^{\sos}$, i.e., we show $\tau_+^{\sos}(AB) \leq \min(\tau_+^{\sos}(A),\tau_+^{\sos}(B))$. We will show here that $\tau_+^{\sos}(AB) \leq \tau_+^{\sos}(A)$, and a similar reasoning can then be used to show $\tau_+^{\sos}(AB) \leq \tau_+^{\sos}(B)$.

Let $(t,X)$ be the optimal point of the semidefinite program \eqref{eq:tau+sos_min} that defines $\tau_+^{\sos}(A)$, i.e., $t = \tau_+^{\sos}(A)$.
We will show that the pair $(t,\tilde{X})$ with
\[ \tilde{X} = (B^T \otimes I_m) X (B \otimes I_m), \]
is feasible for the semidefinite program that defines $\tau_+^{\sos}(AB)$ and thus this will show that $\tau_+^{\sos}(AB) \leq t = \tau_+^{\sos}(A)$.

Observe that we have $\vec(AB) = (B^T \otimes I_m) \vec(A)$ thus:
\[
\begin{bmatrix}
t & \vec(AB)^T\\
\vec(AB) & \tilde{X}
\end{bmatrix}
=
\begin{bmatrix}
1 & 0\\
0 & B^T \otimes I
\end{bmatrix}
\begin{bmatrix}
t & \vec(A)^T\\
\vec(A) & X
\end{bmatrix}
\begin{bmatrix}
1 & 0\\
0 & B \otimes I
\end{bmatrix}
\]
and thus this shows that the matrix
\[ 
\begin{bmatrix}
t & \vec(AB)^T\\
\vec(AB) & \tilde{X}
\end{bmatrix}
\]
is positive semidefinite.

Using the definition of Kronecker product one can verify that the entries of $\tilde{X}$ are given by:
\[ \tilde{X}_{ij,kl} = \sum_{\alpha,\beta=1}^{m'} B_{\alpha j} B_{\beta l} X_{i \alpha, k \beta}. \]
Using this formula we easily verify that $\tilde{X}$ satisfies the rank-one equality constraints:
\[ \tilde{X}_{ij,kl} = \tilde{X}_{il,kj} \]
since $X$ itself satisfies the constraints.

Finally it remains to show that $\tilde{X}_{ij,ij} \leq (AB)_{ij}^2$. For this we need the following simple lemma:
\begin{lemma}
\label{lem:offdiag-ineqs}
Let $(t,X)$ be a feasible point for the semidefinite program \eqref{eq:tau+sos_min}. Then $X_{ij,kl} \leq A_{ij}A_{kl}$ for any $i,j,k,l$.
\end{lemma}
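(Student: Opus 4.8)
The plan is to obtain the inequality directly from the positive semidefiniteness of $X$ together with the diagonal constraints of the semidefinite program \eqref{eq:tau+sos_min}; in particular, neither the variable $t$ nor the rank-one equality constraints will be needed.

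First I would note that since $(t,X)$ is feasible for \eqref{eq:tau+sos_min}, the block matrix $\begin{bmatrix} t & \vec(A)^T\\ \vec(A) & X\end{bmatrix}$ is positive semidefinite, and hence so is its principal submatrix $X \in \S^{mn}$. Fix $i,k \in [m]$ and $j,l \in [n]$, and let $p,q \in \{1,\dots,mn\}$ be the linear indices associated to the matrix positions $(i,j)$ and $(k,l)$ (using the abuse of notation from Section~\ref{sec:tau+sdprelaxation}). Since $X \succeq 0$, every $2\times 2$ principal submatrix of $X$ is positive semidefinite; applying this to the principal submatrix on rows and columns $p$ and $q$ gives $\det\begin{bmatrix} X_{pp} & X_{pq}\\ X_{pq} & X_{qq}\end{bmatrix} \ge 0$, i.e.
\[ X_{ij,kl}^2 \;=\; X_{pq}^2 \;\le\; X_{pp}\, X_{qq} \;=\; X_{ij,ij}\, X_{kl,kl}. \]

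Second I would invoke the diagonal constraints $X_{ij,ij} \le A_{ij}^2$ and $X_{kl,kl} \le A_{kl}^2$ from \eqref{eq:tau+sos_min} — both inequalities being meaningful because $A \ge 0$ forces the right-hand sides to be nonnegative. Combining them with the previous display yields $X_{ij,kl}^2 \le A_{ij}^2 A_{kl}^2$, hence $X_{ij,kl} \le |X_{ij,kl}| \le A_{ij} A_{kl}$, which is exactly the claim. There is essentially no obstacle here: the proof is a one-line consequence of the nonnegativity of $2\times 2$ minors of a positive semidefinite matrix, and the only point requiring attention is the index bookkeeping identifying a pair $(i,j)$ with the corresponding element of $\{1,\dots,mn\}$. (The identical argument, using only $X \succeq 0$ together with $X_{ij,ij} \le R_{ij} A_{ij}$ in the description \eqref{eq:convAsos} of $\cA_+^{\sos}(A)$, likewise gives the more general off-diagonal bounds $X_{ij,kl} \le \sqrt{R_{ij}A_{ij}}\,\sqrt{R_{kl}A_{kl}}$ alluded to in the discussion of additional constraints.)
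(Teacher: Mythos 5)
Your argument is essentially identical to the paper's: both extract the $2\times 2$ principal submatrix on rows/columns indexed by $(i,j)$ and $(k,l)$, use $X \succeq 0$ to get $X_{ij,kl}^2 \le X_{ij,ij} X_{kl,kl}$, and then apply the diagonal constraints $X_{ij,ij} \le A_{ij}^2$, $X_{kl,kl} \le A_{kl}^2$ before taking square roots. The proof is correct and matches the paper's.
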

\begin{proof}
Consider the $2\times 2$ principal submatrix of $X$:
\[ \begin{bmatrix} X_{ij,ij} & X_{ij,kl}\\ X_{kl,ij} & X_{kl,kl} \end{bmatrix}. \]
We know that $X_{ij,ij} \leq A_{ij}^2$ and $X_{kl,kl} \leq A_{kl}^2$. Furthermore since $X$ is positive semidefinite we have $X_{ij,ij} X_{kl,kl} - X_{ij,kl}^2 \geq 0$. Thus we get that:
\[ X_{ij,kl}^2 \leq X_{ij,ij} X_{kl,kl} \leq (A_{ij} A_{kl})^2. \]
Thus since $A_{ij} A_{kl} \geq 0$ we have $X_{ij,kl} \leq A_{ij} A_{kl}$.
\if\mapr1\qed\fi
\end{proof}
Using this lemma we get:
\[ \tilde{X}_{ij,ij} = \sum_{\alpha,\beta=1}^{m'} B_{\alpha j} B_{\beta j} X_{i \alpha, i \beta} \leq \sum_{\alpha,\beta=1}^{m'} B_{\alpha j} B_{\beta j} A_{i \alpha} A_{i \beta} = ((AB)_{ij})^2 \]
which is what we want.
\end{enumerate}

\subsection{Monotonicity}

Here we prove the monotonicity property of $\tau_+$ and $\tau_+^{\sos}$. More precisely we show that if $A \in \RR^{m\times n}_+$ is a nonnegative matrix, and $B$ is a submatrix of $A$, then $\tau_+(B) \leq \tau_+(A)$ and $\tau_+^{\sos}(B) \leq \tau_+^{\sos}(A)$.
\begin{enumerate}
\item We prove the claim first for $\tau_+$. Let $I\subseteq [m]$ and $J\subseteq [n]$ such that $B=A[I,J]$ (i.e., $B$ is obtained from $A$ by keeping only the rows in $I$ and the columns in $J$). Let $X \in \conv \cA_+(A)$ such that $A = \tau_+(A) X$. Define $Y = X[I,J]$ and note that $Y \in \conv(\cA_+(B))$. Furthermore observe that we have $B = A[I,J] = \tau_+(A) X[I,J] = \tau_+(A) Y$. Hence, since $Y \in \conv \cA_+(B)$, this shows, by definition of $\tau_+(B)$ that $\tau_+(B) \leq \tau_+(A)$.
\item We prove the claim now for the semidefinite programming relaxation $\tau_+^{\sos}$. As above, let $I\subseteq [m]$ and $J\subseteq [n]$ such that $B=A[I,J]$. Let $(t,X)$ be the optimal point in \eqref{eq:tau+sos_min} for the matrix $A$. It is easy to see that $(t,X[I,J])$ is feasible  for the semidefinite program \eqref{eq:tau+sos_min} for the matrix $B=A[I,J]$. Thus this shows that $\tau_+^{\sos}(B) \leq \tau_+^{\sos}(A)$.
\end{enumerate}

\subsection{Block-diagonal matrices}
In this section we prove that if $A \in \RR^{m\times n}_+$ and $B \in \RR^{m'\times n'}_+$ are two nonnegative matrices and $A\oplus B$ is the block-diagonal matrix:
\[ A \oplus B = \begin{bmatrix} A & 0\\ 0 & B \end{bmatrix}, \]
then
\[ \tau_+(A\oplus B) = \tau_+(A) + \tau_+(B) \quad \text{ and } \quad
   \tau_+^{\sos}(A\oplus B) = \tau_+^{\sos}(A) + \tau_+^{\sos}(B) \]

\begin{enumerate}
\item We first prove the claim for the quantity $\tau_+$. Observe that the set $\cA_+(A\oplus B)$ is equal to:
\begin{equation}
 \label{eq:A+AoplusB}
 \cA_+(A\oplus B) = \left\{ \begin{bmatrix} R & 0\\ 0 & 0 \end{bmatrix} \; : \; R \in \cA_+(A) \right\} \cup \left\{ \begin{bmatrix} 0 & 0\\ 0 & R' \end{bmatrix} \; : \; R' \in \cA_+(B) \right\}.
\end{equation}
Indeed any element in $\cA_+(A\oplus B)$ must have the off-diagonal blocks equal to zero (since the off-diagonal blocks of $A\oplus B$ are zero), and thus by the rank-one constraint at least one of the diagonal blocks is also equal to zero. Thus this shows that $\cA_+(A\oplus B)$ decomposes as in \eqref{eq:A+AoplusB}.

We start by showing $\tau_+(A \oplus B) \geq \tau_+(A) + \tau_+(B)$. Let $Y \in \conv \cA_+(A \oplus B)$ such that
\[ A\oplus B = \tau_+(A\oplus B) Y. \]
Since $\cA_+(A\oplus B)$ has the form \eqref{eq:A+AoplusB}, we know that $Y$ can be decomposed as:
\[ Y = \sum_{i=1}^r \lambda_i \begin{bmatrix} R_i & 0\\ 0 & 0 \end{bmatrix} + \sum_{j=1}^{r'} \mu_{j} \begin{bmatrix} 0 & 0\\ 0 & R'_{i'} \end{bmatrix}, \]
where $R_i \in \cA_+(A), R'_{j} \in \cA_+(B)$ and $\sum_i \lambda_i + \sum_{j} \mu_j = 1$ with $\lambda, \mu \geq 0$. Note that since $A\oplus B = \tau_+(A\oplus B) Y$ we have:
\[ A = \tau_+(A\oplus B) \sum_{i=1}^r \lambda_i R_i, \]
and
\[ B = \tau_+(A\oplus B) \sum_{j=1}^{r'} \mu_j R'_j. \]
Hence $\tau_+(A) \leq \tau_+(A\oplus B) \sum_{i=1}^r \lambda_i$ and $\tau_+(B) \leq \tau_+(A\oplus B) \sum_{j=1}^{r'} \mu_j$ and we thus get:
\[ \tau_+(A) + \tau_+(B) \leq \tau_+(A\oplus B) \left(\sum_{i=1}^r \lambda_i + \sum_{j=1}^{r'} \mu_j\right) = \tau_+(A\oplus B). \]

We now prove the converse inequality, i.e., $\tau_+(A\oplus B) \leq \tau_+(A) + \tau_+(B)$: Let $t=\tau_+(A)$, $t' = \tau_+(B)$ and $X \in \conv \cA_+(A), X' \in \conv \cA_+(B)$ such that $A = t X$ and $B = t' X'$. Define the matrix 
\[ Y = \begin{bmatrix} \frac{t}{t+t'} X & 0\\ 0 & \frac{t'}{t+t'} X' \end{bmatrix}, \]
and note that $A \oplus B = (t+t') Y$. If we show that $Y \in \conv \cA_+(A\oplus B)$ then this will show that $\tau_+(A\oplus B) \leq t + t'$. We can rewrite $Y$ as:
\[ Y = \frac{t}{t+t'} \begin{bmatrix} X & 0\\ 0 & 0 \end{bmatrix} + \frac{t'}{t+t'} \begin{bmatrix} 0 & 0\\ 0 & X'\end{bmatrix}, \]
and it is easy to see from this expression that $Y \in \conv \cA_+(A\oplus B)$.

We have thus proved that $\tau_+(A\oplus B) = \tau_+(A) + \tau_+(B)$.

\item We now prove the claim for the SDP relaxation $\tau_+^{\sos}$. Let $a = \vec(A)$ and $b=\vec(B)$. Since the matrix $A \oplus B$ has zeros on the off-diagonal, the SDP defining $\tau_+^{\sos}(A\oplus B)$ can be simplified and we can eliminate the zero entries from the program. One can show that after the simplification we get that $\tau_+^{\sos}(A\oplus B)$ is equal to the value of the SDP below:
\begin{equation}
\label{eq:tausos-blkdiag}
\begingroup
\renewcommand*{\arraystretch}{1.3}
\begin{array}[t]{ll}
\text{minimize} & t\\
\text{subject to} & \begin{bmatrix} t & a^T & b^T\\ a & X & 0\\ b & 0 & X' \end{bmatrix} \succeq 0\\
& X_{ij,ij} \leq A_{ij}^2 \quad \forall (i,j) \in [m]\times [n]\\
& X_{ij,kl} = X_{il,kj}  \quad 1\leq i < k \leq m \text{ and } 1\leq j < l \leq n\\
& X'_{i'j',i'j'} \leq B_{i'j'}^2 \quad \forall (i',j') \in [m']\times [n']\\
& X'_{i'j',k'l'} = X_{i'l',k'j'}  \quad 1\leq i' < k' \leq m' \text{ and } 1\leq j' < l' \leq n'
\end{array}
\endgroup
\end{equation}
It is well-known (see e.g., \cite{grone1984positive}) that the following equivalence always holds:
\[
\begin{bmatrix} t & a^T & b^T\\ a & X & 0\\ b & 0 & X' \end{bmatrix} \succeq 0
\;\;
\Longleftrightarrow
\;\;
\exists t_1,t_2 \; : \; t_1+t_2 = t, \quad \begin{bmatrix} t_1 & a^T\\ a & X \end{bmatrix} \succeq 0, \quad \begin{bmatrix} t_2 & b^T\\ b & X' \end{bmatrix} \succeq 0
\]
Using this equivalence, the semidefinite program \eqref{eq:tausos-blkdiag} becomes:
\begin{equation}
\label{eq:tausos-blkdiag-2}
\begin{array}[t]{ll}
\text{minimize} & t_1 + t_2\\
\text{subject to} & \begin{bmatrix} t_1 & a^T\\ a & X \end{bmatrix} \succeq 0\\
& X_{ij,ij} \leq A_{ij}^2 \quad \forall (i,j) \in [m]\times [n]\\
& X_{ij,kl} = X_{il,kj}  \quad 1\leq i < k \leq m \text{ and } 1\leq j < l \leq n\\
& \begin{bmatrix} t_2 & b^T\\ b & X' \end{bmatrix} \succeq 0\\
& X'_{i'j',i'j'} \leq B_{i'j'}^2 \quad \forall (i',j') \in [m']\times [n']\\
& X'_{i'j',k'l'} = X_{i'l',k'j'}  \quad 1\leq i' < k' \leq m' \text{ and } 1\leq j' < l' \leq n'
\end{array}
\end{equation}
The semidefinite program is decoupled and it is easy to see that its value is equal to $\tau_{+}^{\sos}(A) + \tau_{+}^{\sos}(B)$.
\end{enumerate}

\if\mapr1
\bibliographystyle{spmpsci}
\else
\bibliographystyle{alpha}
\fi
\bibliography{../../../bib/nonnegative_rank}

\end{document}